\title{The Moduli Space of Generalized Quivers}
\author{Artur de Araujo}
\date{}
\newcommand{\GL}{\mathrm{GL}}
\newcommand{\Or}{\mathrm{O}}
\newcommand{\Sp}{\mathrm{Sp}}
\newcommand{\Hom}{\mathrm{Hom}}
\newcommand{\End}{\mathrm{End}}
\newcommand{\tr}{\mathrm{tr}}
\newcommand{\spec}{\mathrm{Spec}\phantom{.}}
\newcommand{\proj}{\mathrm{Proj}\phantom{.}}
\newcommand{\sslash}{\mathbin{/\mkern-6mu/}}
\newcommand{\Rep}{\mathrm{Rep}}
\newcommand{\Ad}{\mathrm{Ad}}
\newcommand{\ad}{\mathrm{ad}}
\newcommand{\Stab}{\mathrm{Stab}}
\newcommand{\ta}{t(\alpha)}
\newcommand{\ha}{h(\alpha)}
\newcommand{\sa}{\sigma (\alpha)}
\theoremstyle{plain}  
\newtheorem{theorem}{Theorem}[section]
\newtheorem*{theorem*}{Theorem}
\newtheorem{corollary}[theorem]{Corollary}
\newtheorem{lemma}[theorem]{Lemma}
\newtheorem{proposition}[theorem]{Proposition}
\theoremstyle{definition}
\newtheorem{definition}[theorem]{Definition}
\theoremstyle{remark}
\newtheorem*{notation*}{Notation}
\newtheorem{remark}[theorem]{Remark}
\newtheorem*{claim*}{Claim}
\numberwithin{equation}{section}
\begin{document}

\begin{abstract}
We construct the moduli space of finite dimensional representations of generalized quivers for arbitrary connected complex reductive groups using Geometric Invariant Theory as well as Symplectic reduction methods. We explicit characterize stability and instability for generalized quivers in terms of Jordan-H\"older and Harder Narasimhan objects, reproducing well-known results for classical case of quiver representations. We define and study the Hesselink and Morse stratifications on the parameter space for representations, and bootstrap them to an inductive formula for the equivariant Poincaré Polynomial of the moduli spaces of representations. We work out explicitly the case of supermixed quivers, showing that it can be characterized in terms of slope conditions, and that it produces stability conditions different from the ones in the literature. Finally, we resolve the induction of Poincar\'e polinomials for a particular family of orthogonal representations.
\end{abstract}

\maketitle

\section*{introduction}

Generalized quivers unify a diversity of objects in current use by stripping these particular instances to a bare essential: that in each case we're dealing with adjoint actions of a group on some Lie algebra. In representation-theoretic terms, they make headway into a detailed understanding of linear representations of reductive groups. In geometric terms, they are likely to serve as a basis for the extension of several results in the theory of quivers, including detailed explicit results on gauge-theoretical moduli spaces. 

Just as their name indicates, generalized quivers had their immediate motivation in the theory of quiver representations. In fact, classical quivers, along with quivers with additional symmetries constitute main examples. In this paper we apply general machinery in the theory of quuotients to define stability conditions for generalized quivers, characterize the correrspoding stratifications in the space of representations, and deduce an inductive formula for the equivariant cohomology of the resulting moduli spaces. When spacializing these results to the classical examples we just mentioned, we will recover many of the main result about them. In that sense, generalized quivers make headway into a Lie theoretic framework for those problems.

Our main definition is the following.
\begin{definition}
Let $G$ be a reductive group, $\mathfrak{g}$ its Lie agebra.
\begin{enumerate}
\item A \emph{generalized $G$-quiver $\tilde{Q}$ with dimension vector} is a pair $(R,\mathrm{Rep}(\tilde{Q}))$ where $R$ is a closed reductive subgroup of $G$, and $\mathrm{Rep}(\tilde{Q})$ a finite-dimensional representation of $R$ (the \emph{representation space}.) We require the irreducible factors of the representation also to be irreducible factors of $\mathfrak{g}$ as an $\mathrm{Ad} R$-module, and the trivial representation to not occur.

\item A generalized quiver \emph{of type Z} is a generalized quiver for which $R$ can be realized as a centralizer in $G$ of a some closed abelian reductive subgroup.

\item A \emph{representation of $\tilde{Q}$} is a vector $\varphi \in \mathrm{Rep}(\tilde{Q})$.
\end{enumerate}
\end{definition}

This definition is essentially due to Derksen-Weyman \cite{dw}, though they require generalized quivers always to be of type Z. (This has the important consequence that $R$ is then a Levi subgroup.) Note that the definition above applies equally well to real or to complex Lie groups, as well as to reductive algebraic groups over some field $k$ (in fact, Derksen-Weyman's original setting.) However, apart from an incidental appearance of unitary groups, in this paper we will restrict to \emph{affine, linearly reductive complex groups}.

Our main problem is the following: classify representantions of generalized quivers up to isomorphism; in other words, construct and characterize the quotient $\Rep (\tilde{Q})\sslash R$. It is clear from the start that this problem fits in the general framework of both affine GIT and symplectic geometry, and it is a standard, but fundamental fact that both of these coincide; this `duality' permeates our treatment. It also turns out that the study of generalized quivers is (not surprisingly) deeply intertwined with Lie theory, and such intertwining makes this topic fairly interesting in the interactions that it brings to light. On the other hand, while the background required is rather standard, it is spread through different areas. Quivers have bearings in areas were this material is not necessarily well know in its entirety, so we have been careful to spread out a summary of this background in each relevant section. Our comments are necessarily very brief, so cannot possibly serve as an introduction to the subject. It does however serve to set notation, and we hope that it is systematic enough to serve as a guide through the paper. Needless to say, a nuanced understanding of this rich topic is only to be gained through a study of the relevant sources, so we've tried to point them out in situ.

Our main result is an inductive formula for the equivariant Poincar\'e polynomial of the locus of semistable representations. Recall that if the action of the group is free, this is the Poincar\'e polynomial of the moduli space. This formula is deduced by methods inspired by work of Atiyah and Bott on the moduli space of connections over a Riemann surface. They used the square-norm of a moment map to define a stratification of the space of connections, and extract the equivariant polynomials from the Thom-Gysin sequence for that stratification. This work was later adapted to the projective case by Kirwan \cite{kirwan}, which also proved that the stratification coincided with an algebraic one which is defined in terms of the instability of points on the variety. We will use an extension of this theory to linear actions on affine spaces. We find that after retracting the strata, the Poincar\'e polynomial is expressed in terms of representations spaces of lower rank, so that the whole process is in fact computable. We provide an example to show this.

\subsection{Acknowledgments}
I want to thank my adviser, Peter Gothen, for his expertise and advice, essential in more than one aspect for this work. I also want to thank Oscar Garcia-Prada and Luis Alvarez-Consul, who received me more than one at ICMAT Madrid. Andre Oliveira spent an undue ammount of time listening to my thoughts and questions, and also deserves a mention. Last but not least, Ana Peon-Nieto, with whom I had various discussions.

I have been supported by Fundação para a Ciência e a Tecnologia, IP (FCT) under the grant SFRH/BD/89423/2012, by Funda\c c\~ ao Calouste Gulbenkian under the program Est\'imulo \`a Investigação, partially supported by CMUP (UID/MAT/00144/2013) and the project PTDC/MAT-GEO/2823/2014 funded by FCT (Portugal) with national funds and by CMUP (UID/MAT/00144/2013), which is funded by FCT (Portugal) with national (MEC) and European structural funds through the programs FEDER, under the partnership agreement PT2020.

\section{Preliminaries}\label{preliminaries}

\subsection{Parabolic and Levi subgroups}\label{parabolic}

In the theory of quotients, parabolic and Levi subgroups play a prominent role. We briefly review their definition, as well as some facts that we'll need later on. Good references are \cite{ggm} \cite{springer}.

We will consider the case of complex Lie groups because this is precisely the setting we'll use below; there are ready translations to the setting of complex algebraic groups . Note that this is in fact a formal distinction, since 

\subsubsection{The definitions}
In this paper, we will take $G$ to be a connected, lineraly reductive linear algebraic group over the complex numbers. This is in fact equivalent to $G$ being reductive in two other common senses, which we make explicit for reference. First, we say a complex Lie group $G$ is reductive if it is the complexification of any of its maximal compacts $K$, which we'll take as fixed; every such connected $G$ admits a unique structure as an algebraic group, and it is linearly reductive as well as faithfully representable. On the other hand, every complex algebraic group with these two latter properties is not only smooth, but a reductive complex Lie group. Further, such algebraic groups are in fact linear.

Fix then a group complex reductive $G$ with fixed maximal compact $K$, and denote by $\mathfrak{g}$ and $\mathfrak{k}$ the corresponding Lie algebras. If $\mathfrak{z}$ is the centre of $\mathfrak{g}$ and $T$ a maximal torus of $K$, there is a choice of Cartan subalgebra $\mathfrak{h}$ such that $\mathfrak{z}\oplus \mathfrak{h}=\mathfrak{t}^\mathbb{C}$, where $\mathfrak{t}=\mathrm{Lie}\phantom{.}T$. Let $\Delta$ be a choice of simple roots for the Cartan decomposition of $\mathfrak{g}$ with respect to $\mathfrak{h}$. For any subset $A=\{ \alpha_{i_1},...,\alpha_{i_s}\} \subset \Delta$, define
\begin{equation*}
D_A=\{ \alpha \in R \phantom{.} | \phantom{.} \alpha =\sum m_j\alpha_j, \textrm{ $m_{i_t}\geq 0$ for $1\leq t\leq s$} \}
\end{equation*}

The \emph{parabolic subalgebra} associated to $A$ is 
\begin{equation*}
\mathfrak{p}_A=\mathfrak{z}\oplus \mathfrak{h}\oplus \bigoplus_{\alpha\in D_A} \mathfrak{g}_\alpha
\end{equation*}
This subalgebra determines a subgroup $P_A$ of $G$, the \emph{standard parabolic subgroup} determined by $A$.

\begin{definition}
A \emph{parabolic subgroup} of $G$ is a subgroup $P$ conjugate to some standard parabolic subgroup $P_A$. A \emph{Levi subgroup} of $P$ is a maximal reductive subgroup of $P$.
\end{definition}

For the case of standard parabolic subgrooups there is a ``natural'' choice of a Levi subgroup.  Let $D_A^0\subset D_A$ be the set of roots with $m_j=0$ for $\alpha_j \in A$. Then, 
\begin{equation*}
\mathfrak{l}_A=\mathfrak{z}\oplus \mathfrak{h}\oplus \bigoplus_{\alpha\in D^0_A} \mathfrak{g}_\alpha
\end{equation*}
is the standard Levi subalgebra of $\mathfrak{p}_A$, and the connected subgroup $L_A$ determined by $\mathfrak{l}_A$ is a Levi subgroup of $P_A$.

We will need an important construction of parabolic subgroups. For any $\beta \in \mathfrak{k}$, let
\begin{align*}
\mathfrak{p}(\beta) &:= \{ x\in \mathfrak{g} | \mathrm{Ad}(\exp{it\beta})x \textrm{\phantom{.}remains bounded as $t\to 0$}\} \\
\mathfrak{l}(\beta) &:= \{  x\in \mathfrak{g} | [\beta, x]=0\}
\end{align*}
Let $P(\beta)$ and $L(\beta)$ be the corresponding closed subgroups of $G$ and their Levis. The following is in \cite{ggm}:
\begin{lemma}\label{oneparparabolic}
The sugroups $P(\beta)$ and $L(\beta)$ are a parabolic, resp. Levi subgroup of $G$. Conversely, for any parabolic subgroup of $P$ there is a $\beta \in \mathfrak{k}$ such that $P=P(\beta)$.
\end{lemma}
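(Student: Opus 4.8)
\medskip
\noindent\emph{Proof plan.}
The plan is to reduce both statements to the explicit root-space description of the standard parabolics $P_A$ and $L_A$. For the first assertion I would begin by normalizing $\beta$: every element of $\mathfrak{k}$ is $\Ad K$-conjugate to one lying in $\mathfrak{t}$, and since $\exp(it\,\Ad(k)\beta)=k\exp(it\beta)k^{-1}$ the operator $\Ad(k)$ carries $\mathfrak{p}(\beta)$, $\mathfrak{l}(\beta)$ and the groups $P(\beta)$, $L(\beta)$ to the corresponding objects for $\Ad(k)\beta$; hence it suffices to treat $\beta\in\mathfrak{t}$, and after a further conjugation by a representative of the Weyl group I may assume $\beta$ lies in the closed positive Weyl chamber. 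Now $\Ad(\exp(it\beta))$ acts on each root space $\mathfrak{g}_\alpha$ by the scalar $e^{it\alpha(\beta)}$, which, because $\alpha$ is imaginary-valued on $\mathfrak{t}$, has modulus $e^{-t(-i\alpha(\beta))}$; so $\mathfrak{g}_\alpha\subset\mathfrak{p}(\beta)$ precisely when $-i\alpha(\beta)\ge 0$ (taking the limit $t\to+\infty$), while $\mathfrak{g}_\alpha\subset\mathfrak{l}(\beta)$ precisely when $\alpha(\beta)=0$. Writing $\alpha=\sum_j m_j\alpha_j$ and using that $-i\alpha_j(\beta)\ge 0$ for every simple $\alpha_j$, a short sign argument identifies the first condition with membership in $D_A$ and the second with membership in $D^0_A$, where $A=\{\alpha_j\in\Delta : \alpha_j(\beta)\ne 0\}$. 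Thus $\mathfrak{p}(\beta)=\mathfrak{p}_A$ and $\mathfrak{l}(\beta)=\mathfrak{l}_A$, so $P(\beta)=P_A$ is parabolic and $L(\beta)=L_A$ is one of its Levi subgroups.

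For the converse, let $P$ be any parabolic subgroup. Since $P$ contains a Borel subgroup and $K$ acts transitively on the compact flag manifold $G/B$, $P$ is $K$-conjugate to a standard parabolic: $P=kP_Ak^{-1}$ with $k\in K$ and $A\subseteq\Delta$. Reading the computation above backwards, pick a dominant $\beta_0\in\mathfrak{t}$ with $\alpha_j(\beta_0)\ne 0$ exactly for $\alpha_j\in A$ (for instance a positive combination of the fundamental coweights indexed by $A$, normalized to lie in $\mathfrak{t}\subset\mathfrak{k}$); then $P_A=P(\beta_0)$ by that computation. Conjugating, $P=kP(\beta_0)k^{-1}=P(\Ad(k)\beta_0)$ with $\beta:=\Ad(k)\beta_0\in\mathfrak{k}$, as required.

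I expect the only genuinely delicate step to be the bookkeeping in the first paragraph: pinning down the sign and the factor of $i$ in the exponent so that the limit in the definition of $\mathfrak{p}(\beta)$ is taken in the correct direction, and checking that the boundedness condition, once restricted to the negative root spaces, collapses — via dominance of $\beta$ — to the statement that $\alpha$ is supported away from $A$. The remaining ingredients (conjugacy of maximal tori and the Weyl-group action inside $K$, and transitivity of $K$ on the flag manifold) are standard structure theory of compact and complex reductive groups; see \cite{ggm}, \cite{springer}.
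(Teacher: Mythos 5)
The paper does not prove this lemma at all: it is stated as a citation to \cite{ggm} (with the algebraic analogue deferred to \cite{springer}, Prop.\ 8.4.5). So there is nothing to compare your argument against except the standard proof in those references --- which is essentially what you have written. Your reduction is correct: conjugate $\beta$ into the closed positive chamber of $\mathfrak{t}$ using equivariance of $\beta\mapsto P(\beta)$, read off the weight of $\Ad(\exp(it\beta))$ on each root space, and match the boundedness (resp.\ vanishing) condition with the sets $D_A$ and $D_A^0$ for $A=\{\alpha_j\in\Delta:\alpha_j(\beta)\neq 0\}$; the sign argument works because all coefficients of a root over the simple roots share a sign. The converse via $G=KB$ (Iwasawa) and transitivity of $K$ on $G/B$, then choosing $\beta_0$ supported on the fundamental coweights indexed by $A$, is also the standard route and is sound. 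Two small points. First, you are tacitly correcting the paper's definition: as printed, $\mathfrak{p}(\beta)$ requires boundedness as $t\to 0$, which is vacuous and would make $\mathfrak{p}(\beta)=\mathfrak{g}$; your reading ($t\to+\infty$, or equivalently existence of the limit) is the intended one and is what makes the lemma true. Second, be slightly careful that the fundamental coweights live in $\mathfrak{h}=\mathfrak{t}^{\mathbb{C}}$, so the normalization placing $\beta_0$ in $\mathfrak{t}$ (versus $i\mathfrak{t}$) must match whichever of the paper's two conventions ($\beta\in\mathfrak{k}$ in this lemma, $\beta\in i\mathfrak{k}$ in the flag interpretation) you adopt; this is exactly the bookkeeping you flag, and it does not affect the validity of the argument.
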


\subsubsection{Dominant elements}
The description above is in fact a statement about parabolic subgroups and their dominant characters. Note that the characters of $\mathfrak{p}_A$ are in bijection with elements of $\mathfrak{z}^*\oplus \mathfrak{c}_A^*$.
\begin{definition}
An \emph{dominant  character} of $\mathfrak{p}_A$ is an element of the form $\chi=z+\sum_{\delta\in A}n_\delta \lambda_\delta$ where $n_\delta$ is a non-positive real number. It is strictly dominant if those integers are actually strictly negative. The \emph{dominant weights} are the correspoding elements of $\mathfrak{z}\oplus \mathfrak{c}_A$ through the chosen invariant form on the latter.
\end{definition}
We have \cite{ggm}:
\begin{lemma}
Given a dominant weight $\beta$ of $P$, $P\subset P(\beta)$, equality holding if and only if $\beta$ is strictly anti-dominant.
\end{lemma}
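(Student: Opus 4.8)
This is standard (it is the content of the reference \cite{ggm}), and the plan is to reduce to the case of a standard parabolic and then settle both assertions by a direct root-space computation. For the reduction: since $G/P_A$ is compact, $G=KP_A$, so every parabolic $P$ is $K$-conjugate to a standard one, say $P=kP_Ak^{-1}$ with $k\in K$ and $A\subseteq\Delta$. The assignment $\beta\mapsto P(\beta)$ is $K$-equivariant, $P(\mathrm{Ad}(k)\gamma)=kP(\gamma)k^{-1}$, straight from the definition---boundedness of $\mathrm{Ad}(\exp(it\beta))x$ is unaffected by applying the fixed linear isomorphism $\mathrm{Ad}(k)$---and a dominant weight of $P$ is by definition the $\mathrm{Ad}(k)$-image of one of $P_A$. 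Hence both $P\subseteq P(\beta)$ and the equality clause are unchanged if we replace $(P,\beta)$ by $(P_A,\mathrm{Ad}(k^{-1})\beta)$, so we may assume $P=P_A$. Moreover parabolic subgroups are connected, and a connected subgroup is generated by the exponential of its Lie algebra; thus it suffices to prove $\mathfrak{p}_A\subseteq\mathfrak{p}(\beta)$, with equality if and only if $\beta$ is strictly anti-dominant.

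By construction $\mathfrak{p}_A=\mathfrak{z}\oplus\mathfrak{h}\oplus\bigoplus_{\alpha\in D_A}\mathfrak{g}_\alpha$, whereas---since $\mathrm{Ad}(\exp(it\beta))$ acts on a root space $\mathfrak{g}_\alpha$ by a scalar whose asymptotics are governed by the pairing $\langle\alpha,\beta\rangle$ induced by the chosen invariant form---one has the standard description $\mathfrak{p}(\beta)=\mathfrak{z}\oplus\mathfrak{h}\oplus\bigoplus_{\langle\alpha,\beta\rangle\geq 0}\mathfrak{g}_\alpha$. So everything comes down to comparing the root sets $D_A$ and $\{\alpha:\langle\alpha,\beta\rangle\geq 0\}$. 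Write $\beta=\beta_{\mathfrak{z}}+\sum_{\delta\in A}n_\delta\beta_\delta$, where $\beta_{\mathfrak{z}}\in\mathfrak{z}$, $\beta_\delta\in\mathfrak{c}_A$ corresponds to the fundamental weight $\lambda_\delta$ under the invariant form, and $n_\delta\leq 0$ by the definition of a dominant weight. The two facts I would use are that $\langle\alpha,\beta_{\mathfrak{z}}\rangle=0$ for every root $\alpha$ (centrality of $\beta_{\mathfrak{z}}$) and that $\langle\alpha_j,\beta_\delta\rangle$ vanishes for $j\neq\delta$ while being nonzero of a single fixed sign for $j=\delta$. Hence for a root $\alpha=\sum_j m_j\alpha_j$ one gets $\langle\alpha,\beta\rangle=\sum_{\delta\in A}m_\delta\,n_\delta\,\langle\alpha_\delta,\beta_\delta\rangle$, a sum each of whose terms carries the sign of $m_\delta$. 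When $\alpha\in D_A$ the coefficients $m_\delta$ ($\delta\in A$) are all $\geq 0$, so $\langle\alpha,\beta\rangle\geq 0$; hence $\mathfrak{p}_A\subseteq\mathfrak{p}(\beta)$ and $P\subseteq P(\beta)$.

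For the equality clause, $\mathfrak{p}(\beta)=\mathfrak{p}_A$ holds iff $\langle\alpha,\beta\rangle<0$ for every $\alpha\notin D_A$; using that every root is either positive (all $m_j\geq 0$) or negative (all $m_j\leq 0$), such an $\alpha$ is necessarily a negative root with $m_{\delta_0}<0$ for some $\delta_0\in A$. If $\beta$ is strictly anti-dominant, i.e. every $n_\delta\neq 0$, then in the above sum all terms have the same sign and the $\delta_0$-term is strictly so, forcing $\langle\alpha,\beta\rangle<0$, and equality holds. Conversely, if some $n_{\delta_1}=0$, the negative simple root $-\alpha_{\delta_1}$ lies outside $D_A$ yet satisfies $\langle-\alpha_{\delta_1},\beta\rangle=0$, so $\mathfrak{g}_{-\alpha_{\delta_1}}\subseteq\mathfrak{p}(\beta)\setminus\mathfrak{p}_A$ and the inclusion is proper.

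I expect the only delicate point to be the sign bookkeeping: one must check that ``dominant'' in the sense of the definition above is exactly the sign of $n_\delta$ for which $\langle\alpha,\beta\rangle\geq 0$ on the roots of $\mathfrak{p}_A$ (this is what pins down the non-positivity convention, and traces back to the signature of the chosen invariant form on $\mathfrak{k}$). Beyond that there is no real obstacle, and a complete treatment is in \cite{ggm}.
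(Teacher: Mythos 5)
Your argument is essentially correct, but note that the paper itself offers no proof of this lemma at all --- it is stated as a quoted fact with a bare citation to \cite{ggm} --- so there is nothing internal to compare against; what you have written is the standard root-space argument that the reference would supply. The reduction to a standard parabolic via $K$-conjugacy and the equivariance $P(\mathrm{Ad}(k)\gamma)=kP(\gamma)k^{-1}$ are fine, as is the converse direction of the equality clause (the root $-\alpha_{\delta_1}$ witnessing properness when some $n_{\delta_1}=0$). The one place a reader must be careful is exactly the point you flag at the end: under the paper's literal conventions ($n_\delta\le 0$ for a dominant weight, and $\langle\alpha_\delta,\lambda_\delta\rangle>0$ for the fundamental-weight pairing) a root $\alpha\in D_A$ actually gives $\langle\alpha,\beta\rangle=\sum_{\delta\in A}m_\delta n_\delta\langle\alpha_\delta,\lambda_\delta\rangle\le 0$, not $\ge 0$, so your displayed description of $\mathfrak{p}(\beta)$ must be taken with the matching sign (equivalently, the paper's ``bounded as $t\to 0$'' in the definition of $\mathfrak{p}(\beta)$ is itself a typo for $t\to\infty$, and the lemma's ``strictly anti-dominant'' versus the definition's ``strictly dominant'' is another symptom of the same sign confusion in the source). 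Since you explicitly isolate this as the only delicate point and the rest of the computation --- each term of the pairing carrying a single coherent sign governed by $m_\delta$, positivity/negativity of roots forcing any $\alpha\notin D_A$ to be a negative root with some $m_{\delta_0}<0$ --- goes through verbatim once the convention is fixed, I consider the proof complete in substance; just resolve the sign once and for all rather than leaving it as a conditional.
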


\subsubsection{Interpretation in terms of flags}
The construction above is key to understanding an interpretation of parabolic subgroups in terms of flags. This interpretation will be important for us below in establishing a connection between the Lie-theoretic framework, and the theory of classical quivers.

Fix a faithful representation $\rho: K\to U(V)$ (complexifying, also $\rho: G\to GL(V)$.) This induces an isomorphism $\mathfrak{k}\simeq \mathfrak{k}^*$, which we shall use implicitly. Because $\rho$ is a unitary representation, the image $\rho_* \beta$ of any element $\beta \in i\mathfrak{k}$ in $\GL (V)$ is Hermitian, so it diagonalizes with real eigenvalues $\lambda_1<...<\lambda_r$, and induces a filtration
\begin{equation*}
0 \neq V^1\varsubsetneq ... \varsubsetneq V^r =V
\end{equation*}
where $V^k=\bigoplus_{i\leq k}V_{\lambda_k}$ is the sum of all eigenspaces $V_{\lambda_i}$ with $i\leq k$.
Let $St(\beta)\subset \GL (V)$ be the subgroup stabilizing this flag; in other words, $g\in St(\beta)$ if and only if $g\cdot V^r\subset V^r$ for all $r$.
\begin{proposition}
For any $\beta\in i\mathfrak{k}$, we have $\rho^{-1}(St(\beta))=P(\beta)$.
\end{proposition}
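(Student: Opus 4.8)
The plan is to work out both sides explicitly in terms of eigenvalue decompositions and reduce the claim to a comparison of two $1$-parameter subgroups. First I would note that by Lemma \ref{oneparparabolic} the right-hand side $P(\beta)$ is, by definition, the subgroup of $G$ consisting of those $g$ for which $\mathrm{Ad}(\exp(it\beta))g$ — more precisely, the associated limit condition — is controlled; but it is cleaner to use the characterization of $P(\beta)$ as $\{g\in G : \lim_{t\to\infty}\exp(it\beta)\,g\,\exp(-it\beta)\text{ exists}\}$ (the attracting parabolic for the $\mathbb{R}$-action, translated to the present sign conventions via $\mathfrak p(\beta)$ being the boundedness locus). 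I would therefore first restate $P(\beta)$ as the set of $g$ such that $\mathrm{Ad}(\exp(it\beta))g$ stays bounded as $t\to 0$ — the excerpt's definition — and make sure the sign matches the filtration convention $\lambda_1<\cdots<\lambda_r$ with $V^k=\bigoplus_{i\le k}V_{\lambda_i}$.

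The key computational step is: since $\rho$ is faithful, $g\in P(\beta)$ if and only if $\rho(g)$ lies in the corresponding parabolic of $\GL(V)$ attached to the Hermitian operator $\rho_*\beta$, because $\rho_*\mathfrak p(\beta)=\mathfrak p(\rho_*\beta)\cap\rho_*\mathfrak g$ and $\rho(P(\beta))$ is the connected-plus-component-group subgroup integrating it — I would invoke that $\rho$ is a closed immersion of algebraic groups, so $\rho^{-1}$ of a parabolic of $\GL(V)$ containing $\rho(G)\cap(\text{that parabolic})$ is a parabolic of $G$, and that the construction $\beta\mapsto P(\beta)$ is compatible with $\rho$. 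So the problem reduces to the case $G=\GL(V)$, $K=U(V)$, $\beta=\rho_*\beta$ Hermitian. Then I would compute directly: decompose $\mathrm{End}(V)=\bigoplus_{i,j}\Hom(V_{\lambda_i},V_{\lambda_j})$; the operator $\mathrm{Ad}(\exp(it\beta))$ acts on the $(i,j)$-block by the scalar $e^{it(\lambda_j-\lambda_i)}$. An element $x=\sum x_{ij}$ has $\mathrm{Ad}(\exp(it\beta))x$ bounded as $t\to 0$ — note as $t\to 0$ every such exponential is bounded, so I must be careful: the intended condition is boundedness as $t\to\infty$ along the imaginary direction, equivalently $\lambda_j-\lambda_i\le 0$ whenever $x_{ij}\ne 0$ — i.e. block-upper-triangular with respect to the flag $V^1\subset\cdots\subset V^r$ (after possibly reversing the order), which is exactly $\mathrm{Lie}\,St(\beta)$. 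Exponentiating and using connectedness of $P(\beta)$ together with the fact that both groups have the same Lie algebra and both are closed, I conclude $\rho(P(\beta))=\rho(G)\cap St(\beta)$, hence $\rho^{-1}(St(\beta))=P(\beta)$.

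The main obstacle I anticipate is purely bookkeeping around sign/direction conventions: the excerpt's $\mathfrak p(\beta)$ is defined by boundedness of $\mathrm{Ad}(\exp(it\beta))x$ ``as $t\to 0$,'' which literally is automatic, so one must identify the intended limit ($t\to+\infty$, or equivalently $\exp(-t\beta)$ with $\beta\in i\mathfrak k$ real-semisimple) and then check that the resulting flag-stabilizer is $St(\beta)$ as defined (with the increasing eigenvalue order) rather than its opposite. Once the convention is pinned down, the eigenspace computation is routine. A secondary point requiring a line of justification is that $\rho(P(\beta))$ is precisely $\rho(G)\cap St(\beta)$ and not merely contained in it: this follows because $\rho$ is a closed immersion, $St(\beta)$ is connected, $P(\beta)$ is connected (it contains a maximal torus and is generated by it together with the relevant root subgroups), and the two Lie algebras match under $\rho_*$, so the identity components agree and a connectedness argument finishes it. I would also remark that this proposition, combined with Lemma \ref{oneparparabolic}, shows every parabolic of $G$ arises as the preimage of a flag stabilizer, which is the statement needed downstream for the link with classical quivers.
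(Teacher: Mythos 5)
The paper states this proposition without proof, so there is no argument of the author's to measure yours against; your plan is the natural one and is essentially correct. Your diagnosis of the convention problem is also right: as literally written, $\Ad(\exp(it\beta))x$ always remains bounded as $t\to 0$, so the condition defining $\mathfrak{p}(\beta)$ has to be read as boundedness as $t\to\infty$, with the sign chosen so that the surviving blocks $\Hom(V_{\lambda_i},V_{\lambda_j})$ are those with $\lambda_j\le\lambda_i$ --- exactly the Lie algebra of the stabilizer of the increasing flag $V^1\subset\cdots\subset V^r$. The block computation in $\GL(V)$ and the transport along the closed embedding $\rho$ are both fine, since $\rho_*$ is an injective linear map and hence preserves and reflects boundedness.

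The one step you should tighten is the last one. Knowing that $\rho^{-1}(St(\beta))$ and $P(\beta)$ are closed subgroups with the same Lie algebra, and that $P(\beta)$ is connected, only gives $\rho^{-1}(St(\beta))^\circ=P(\beta)$; it does not by itself rule out extra components of $\rho^{-1}(St(\beta))$ (compare $\mathrm{SO}(n)\subset\Or(n)$, closed subgroups with the same Lie algebra). Two clean ways to finish: (i) avoid Lie algebras entirely and argue at the group level --- $\rho(g)$ stabilizes the flag if and only if $\exp(it\rho_*\beta)\,\rho(g)\,\exp(-it\rho_*\beta)$ stays bounded as $t\to\infty$, if and only if $\exp(it\beta)\,g\,\exp(-it\beta)$ stays bounded in $G$ (equivalent because $\rho$ is a closed embedding), which is the group-level characterization of $P(\beta)$; or (ii) observe that $\rho^{-1}(St(\beta))$ normalizes $\mathfrak{p}(\beta)$ and that a parabolic subgroup of a connected group is the full normalizer of its Lie algebra, so $\rho^{-1}(St(\beta))\subset N_G(\mathfrak{p}(\beta))=P(\beta)$. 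With either repair the argument is complete.
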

Using Proposition \ref{oneparparabolic}, it is now easy to characterize parabolic subgroups of $G$ as stabilizers of certain flags. Levi subgroups then correspond to stabilizers of the associated graded vector space, i.e., stabilizers of the decompositions
\begin{equation*}
V=\bigoplus V_{\lambda_i}
\end{equation*}
induced by an element $\beta \in i\mathfrak{k}$.

It is important, however, to keep in mind that this applies to filtrations induced by an element of $i\mathfrak{k}$, and not just any filtration of $V$. For $\GL (V)$, it is true that any flag is so induced. But for an orthogonal or symplectic group, for example, the flags induced in this way are \emph{isotropic}: they satisfy $V^{r-k}=(V^{k})^\perp$ (so that for $k\leq r/2$, the spaces are actually isotropic.)

\subsubsection{Algebraic groups} For algebraic groups, the natural objects are one-parameter subgroups, and not elements of the Lie algebra. Given a one-parameter subgroup (OPS) $\lambda$ of $G$, we can, in fact, define the analogue of the above parabolic and Levi subgroups by
\begin{align*}
P(\lambda)&:=\{ g\in G \left| \phantom{.}\lim_{t\to 0} \mathrm{Ad}(\lambda (t))g \phantom{.}\textrm{exists} \right.\} \\
L(\lambda)&:=\{ g\in G \left| \phantom{.}\lim_{t\to 0} \mathrm{Ad}(\lambda (t))g=g \right.\}
\end{align*}

In fact, the analogue of Proposition \ref{oneparparabolic} holds, that is, this construction yields all parabolic subgroups of $G$ (\cite{springer} Proposition 8.4.5.) But in fact we can relate both constructions, since any one-parameter subgroup of $G$ is conjugate to some other which sends the maximal compact subgroup $U(1)$ into the maximal compact $K$. In other words, each one-parameter subgroup is uniquely determined by an element of $\mathfrak{k}$, since
\begin{proposition}
$\Hom (U(1),K)\otimes_\mathbb{Z} \mathbb{R} \simeq \mathfrak{k}$
\end{proposition}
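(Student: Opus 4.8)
The plan is to reduce the statement to a fixed maximal torus, where it is the classical identification of cocharacters with the integral lattice, and then spread it over all of $K$ by conjugation. First I would make the left-hand side precise. Differentiating a homomorphism $\lambda\colon U(1)\to K$ at the identity sends it to $\beta_\lambda:=d\lambda(1)\in\mathfrak{k}$ (using the standard generator of $\mathrm{Lie}\,U(1)$); since $U(1)$ is connected this assignment is injective, its image is the set $\Lambda_K:=\{\beta\in\mathfrak{k}\mid\exp(2\pi\beta)=e\}$, and precomposing $\lambda$ with $z\mapsto z^{n}$ corresponds to $\beta_\lambda\mapsto n\beta_\lambda$. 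Thus $\Lambda_K\subset\mathfrak{k}$ is stable under integer scaling, and the asserted isomorphism is the claim that $\mathrm{span}_{\mathbb{R}}\Lambda_K=\mathfrak{k}$, together with the fact that on each Cartan subalgebra $\Lambda_K$ is an honest full lattice, which is what legitimizes writing $\otimes_{\mathbb{Z}}\mathbb{R}$.

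For the torus case, fix a maximal torus $T\subset K$ with $\mathfrak{t}=\mathrm{Lie}\,T$. Since $T$ is compact, connected and abelian, $\exp\colon\mathfrak{t}\to T$ is a surjective homomorphism with kernel a full lattice; a homomorphism $U(1)=\mathbb{R}/\mathbb{Z}\to\mathfrak{t}/\ker(\exp)$ lifts uniquely to a linear map $\mathbb{R}\to\mathfrak{t}$ carrying $1$ into $\ker(\exp)$, whence $\Hom(U(1),T)$ is identified with a full lattice in $\mathfrak{t}$ (a scaled copy of $\Lambda_K\cap\mathfrak{t}$) and $\Hom(U(1),T)\otimes_{\mathbb{Z}}\mathbb{R}\xrightarrow{\ \sim\ }\mathfrak{t}$. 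In particular $\mathfrak{t}\subset\mathrm{span}_{\mathbb{R}}\Lambda_K$.

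To globalize, observe that differentiation is $\mathrm{Ad}(K)$-equivariant: if $\lambda$ takes values in $T$, then $g\lambda g^{-1}$ takes values in $gTg^{-1}$ with derivative $\mathrm{Ad}(g)\beta_\lambda$; hence $\mathrm{Ad}(g)\mathfrak{t}\subset\mathrm{span}_{\mathbb{R}}\Lambda_K$ for every $g\in K$. The one genuinely nontrivial ingredient is then the conjugacy theorem for maximal tori in a compact connected group — equivalently, that every element of $\mathfrak{k}$ lies in some Cartan subalgebra — which gives $\bigcup_{g\in K}\mathrm{Ad}(g)\mathfrak{t}=\mathfrak{k}$, and therefore $\mathrm{span}_{\mathbb{R}}\Lambda_K=\mathfrak{k}$. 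Reading the resulting isomorphism back through $\exp$ then gives the stated parametrization of one-parameter subgroups of $G$, conjugated into $K$, by elements of $\mathfrak{k}$.

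The main obstacle is interpretive rather than technical: $\Hom(U(1),K)$ is not an abelian group when $K$ is nonabelian, so one must first argue that ``$\Hom(U(1),K)\otimes_{\mathbb{Z}}\mathbb{R}$'' is well posed and coincides with the $\mathbb{R}$-span of the image of differentiation (compatibly with the tautological $\mathbb{Z}$-scaling), with no relations lost. Once that is pinned down, the mathematical content collapses to the torus computation plus the conjugacy theorem, both standard.
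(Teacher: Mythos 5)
The paper states this proposition without any proof at all---it is quoted as a standard fact from Lie theory---so there is no argument of the paper's to compare yours against. Your proposal is correct and is the standard way to justify the statement: identify $\Hom(U(1),T)$ with the integral lattice $\ker(\exp)\subset\mathfrak{t}$ for a maximal torus $T$, so that $\Hom(U(1),T)\otimes_{\mathbb{Z}}\mathbb{R}\simeq\mathfrak{t}$, and then use $\mathrm{Ad}(K)$-equivariance of differentiation together with the conjugacy theorem for maximal tori (every element of $\mathfrak{k}$ lies in some Cartan subalgebra) to sweep out all of $\mathfrak{k}$. Your opening caveat is also well taken and worth keeping: since $K$ is nonabelian, $\Hom(U(1),K)$ is only a set with a $\mathbb{Z}$-scaling and a conjugation action, so ``$\otimes_{\mathbb{Z}}\mathbb{R}$'' must be read as you do, namely as the real span of the image of differentiation, which restricts to an honest lattice on each $\mathfrak{t}$; this is exactly the reading the surrounding text of the paper intends when it says a one-parameter subgroup is determined by an element of $\mathfrak{k}$.
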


One sees in this way that the two settings are completely interchangeable.

\section{Quotients}

The setting throughout the section is as follows. Let $V$ be a hermitian vector space, a  finite dimensional complex vector space with a fixed hermitian form $(\cdot , \cdot )$. The anti-symmetrization of the hermitian form is a (real) symplectic form on $V$:
\begin{equation*}
\omega(v,w)=2\mathrm{Im}(v,w)
\end{equation*}
We will further denote the coordinate ring of $V$ by $R:=\mathbb{C}[V]$.

Let $G$ be a connected, linearly reductive linear algebraic group over the complex numbers, with a fixed maximal compact $K$. Suppose $G$ acts on $V$ through a regular representation $\rho:G\to \GL(V)$ which restricts to a unitary representation $K\to \mathrm{U} (V)$; we allow $\rho$ to have a kernel $\Delta$.

\subsection{Geometric Invariant Theory}

\subsubsection{The quotient} Recall from Chpater 1 that the algebraic construction of a quotient involves a choice of a character $\chi: G\to \mathbb{G}_m$ of $G$. In general we must always have $\chi ([G,G])=1$, and we will also require that $\chi (\Delta)=1$.\footnote{We could omit this condition, but it is necessary to ensure the existence of semistable points as we shall see below (cf. also \cite{king}.)} If we denote $F:=\ker \chi$, and $R_{\chi,n}$ the set of elements in $R$ such that $g\cdot r=\chi(g)^n r$ (the \emph{semi-invariants} of weight $n$), we have
\begin{equation}
R^F=\bigoplus R_{\chi, n}
\end{equation}
where $R^F$ is the ring of $F$-invariants. We then define the \emph{GIT quotient} to be
\begin{equation*}
V\sslash_\chi G := \proj R_{\chi, n}
\end{equation*}

We can make sense of this definition as follows. With respect to the action of a reductive group $F$, the ring of invariants $R^F$ is final for the subrings made up of invariant elements of $R$. By a theorem of Hilbert, it is also an affine ring, which means that $\spec R^F$ is an affine variety with the following universal property: every $F$-invariant map $V\to Y$ factors through the natural map $\pi: V\to \spec R^F$ induced by the inclusion $R^F\to R$. In other words, $\spec R^F$ is a categorical quotient. In the sense that $\proj$ factors out the $\mathbb{G}_m$ action, we may then see $V\sslash_\chi G$ in fact as a quotient by $G$. An important fact to notice, however, is that precisely because this definition involves a projective quotient, there is only a rational map $V\dashrightarrow V\sslash_\chi G$, which is not generally regular. This means that in a strict sense, we are finding a quotient only for a (Zarisky) open set. In fact, the map is defined only in the open locus of \emph{semistable points}, where the relevant definitions are as follows.
\begin{definition}
A point $x\in X$ is
\begin{enumerate}
\item \emph{$\chi$-semistable} if there is some semi-invariant which does not vanish at $x$.
\item \emph{$\chi$-polystable} if its orbit is closed in the set $X^{\chi-ss}$ of semistable points.
\item \emph{$\chi$-stable} if it is polystable and it is simple, i.e., its stabilizer is precisely $\Delta$.
\end{enumerate}
\end{definition}
The complement of $X^{ss}$ (i.e. the locus where the map is undefined) is called the \emph{null cone} of $X$. Note that polystable points are necessarily semistable, and in fact, the GIT quotient parametrizes closed orbits in $X^{\chi-ss}$. Thus, this quotient is in fact an orbit space for polystable points. On stable points as defined, the quotient has nice geometric properties, in particular it is geometric (it actually parametrizes orbits) and smooth. We want to remark that in the literature, it is often only required that the quotient of stable points be geometric, not smoooth; the corresponding condition on stabilizers is just that it contain $\Delta$ with finite index.

\begin{remark}
We could also have taken the categorical quotient for $G$, and in fact it is retrieved for the trivial character. This would have the advantage that \emph{every} point would be semistable. However, such a quotient is usually very restrictive, as can already be seen in the simplest examples. The choice of the character goes a great way to remedy this. One interesting thing to note is that, because invariants separate closed orbits, every stable point for the trivial character is stable for \emph{any} character.
\end{remark}

Geometrically speaking, the quotient we just defined amounts to the quotient of $V$ as a quasi-projective variety. With the character $\chi$ we can make $G$ act on the affine ring $R[z]$ by letting $g\cdot z=\chi(g)^{-1}z$. Again by the theorem of Hilbert, $R[z]^G$ is also an affine ring, and it inherits a grading from $R[z]$ according to the powers of $z$. A simple observation is the following: an element $r\otimes z^n$ is $G$-invariant if and only if $r\in R_{\chi,n}$. This implies in particular that $(R[z]^G)_n =R_{\chi, n}$, which means that
\begin{equation*}
V\sslash_\chi G= \proj R[z]^G
\end{equation*}
Now, $R[z]=R\otimes_k k[z]$ is the coordinate ring of the trivial line bundle $L$ over $V$, and the co-action of $G$ on $R[z]$ through the character $\chi$ corresponds to an action of $G$ on $L^{-1}$ by the formula
\begin{equation*}
g\cdot (v,z)=(g\cdot v, \chi(g)^{-1}z)
\end{equation*}
Now, $L$ is the pullback of the anticanonical line bundle $\mathscr{O}(1)$ for any embedding $X\to \mathbb{P}^n$ into some projective space, and so $L^{-1}=\mathscr{O}_V(-1)$ is the blow-up of the corresponding affine cone over $V$ at the origin. Further, $\bigoplus_{n\geq 0} \mathscr{O}(n)$ is the coordinate ring of that cone, so that $\proj R[z]^G$ essentially corresponds to taking the quotient of that affine cone and then projecting down to some projective space. This geometric interpretation explains the so-called  \emph{topological criterion}.
\begin{theorem}
Let $v\in V$ be any point, and $\hat{v}$ be an arbitrary lift of $v$ to the total space of $L^{-1}$. Then,
\begin{enumerate}
\item The point $v$ is $\chi$-semistable if and only if the closure of the orbit $G\cdot \hat{v}$ is disjoint from the zero section;
\item The point $v$ is $\chi$-stable if and only if $G\cdot \hat{v}$ is closed, and its stabilizer is precisely $\Delta$.
\end{enumerate}
\end{theorem}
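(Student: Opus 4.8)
The plan is to transport everything to the affine $G$-variety $\mathcal{L}$, the total space of $L^{-1}$, which we identify with $V\times\mathbb{A}^1$ with coordinate ring $R[z]$ and $G$-action $g\cdot z=\chi(g)^{-1}z$; under the identification $(R[z]^G)_n=R_{\chi,n}$ this means $f\in R_{\chi,n}$ exactly when $fz^n$ is $G$-invariant, and the zero section $Z=\{z=0\}$ is precisely the common zero locus of the positive-weight semi-invariants. Writing $p\colon\mathcal{L}\to V$ for the projection, I may assume the chosen lift is $\hat v=(v,z_0)$ with $z_0\neq 0$, since a lift lying on $Z$ makes both statements trivial. For (1), one direction is immediate: from a semi-invariant $f\in R_{\chi,n}$, $n\geq 1$, with $f(v)\neq 0$ I get the $G$-invariant function $fz^n$ on $\mathcal{L}$, which is constant equal to $f(v)z_0^n\neq 0$ on $G\cdot\hat v$, hence on $\overline{G\cdot\hat v}$, while it vanishes on $Z$; so $\overline{G\cdot\hat v}$ misses $Z$. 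For the converse I would use the separation of disjoint closed invariant subsets of an affine $G$-variety by invariant functions: if $\overline{G\cdot\hat v}$ and $Z$ are disjoint, there is $F\in R[z]^G$ with $F|_Z=0$ and $F(\hat v)\neq 0$; decomposing $F=\sum_n f_nz^n$ into its $z$-homogeneous pieces, each still $G$-invariant because $G$ preserves the $z$-grading, the vanishing on $Z$ kills $f_0$ while $F(\hat v)\neq 0$ forces $f_n(v)\neq 0$ for some $n\geq 1$, and then $f_n\in R_{\chi,n}$ witnesses semistability of $v$.

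For the forward part of (2) I would argue by orbit dimension. If $v$ is polystable and simple, then $\overline{G\cdot\hat v}$ misses $Z$ by (1), so it suffices to rule out $G\cdot\hat v\subsetneq\overline{G\cdot\hat v}$. If the boundary $B:=\overline{G\cdot\hat v}\setminus G\cdot\hat v$ were nonempty it would be a closed $G$-invariant set of strictly smaller dimension than $G\cdot\hat v$, hence would contain a closed orbit $G\cdot w$ with $w=(u,z_1)$, $z_1\neq 0$. By (1) applied to $w$ as a lift of $u$, the point $u$ is semistable, and $u=p(w)\in\overline{Gv}\cap V^{ss}=Gv$ by polystability (the closure of $Gv$ in $V^{ss}$ being $\overline{Gv}\cap V^{ss}$); writing $u=g_0v$, I get $w=c\cdot(g_0\cdot\hat v)$ for the fibrewise scaling by $c=z_1\chi(g_0)/z_0\in\mathbb{G}_m$, and since this scaling commutes with $G$, $G\cdot w=c\cdot(G\cdot\hat v)$ has the same dimension as $G\cdot\hat v$ — contradicting $G\cdot w\subseteq B$. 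Hence $G\cdot\hat v$ is closed, and $\Stab_G(\hat v)=\Stab_G(v)\cap\ker\chi=\Delta$.

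For the converse of (2): if $G\cdot\hat v$ is closed then (since $z_0\neq 0$) it is disjoint from $Z$, so $v$ is semistable by (1), and I must show $Gv$ is closed in $V^{ss}$. Given $u\in\overline{Gv}\cap V^{ss}$, the valuative criterion (after a finite base change) provides a one-parameter family $g(t)\in G$ over a discrete valuation ring with $g(t)v\to u$. Looking at the fibre coordinate of $g(t)\cdot\hat v=(g(t)v,\chi(g(t))^{-1}z_0)$: if the valuation of $\chi(g(t))$ is negative the family specializes to $(u,0)\in Z\cap\overline{G\cdot\hat v}$, which is impossible; if it is positive, then choosing a semi-invariant $h\in R_{\chi,N}$, $N\geq 1$, with $h(u)\neq 0$, the identity $h(g(t)v)=\chi(g(t))^{-N}h(v)$ forces $h(g(t)v)$ to have a pole or to vanish identically, contradicting $h(g(t)v)\to h(u)\neq 0$; so the valuation is $0$, the family specializes to $(u,cz_0)\in\overline{G\cdot\hat v}=G\cdot\hat v$ for some $c\in\mathbb{G}_m$, and applying $p$ gives $u\in Gv$. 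Thus $v$ is polystable, and $\Stab_G(\hat v)=\Stab_G(v)\cap\ker\chi=\Delta$; to conclude $\chi$-stability I would also note that closedness of $G\cdot\hat v$ away from $Z$ forces $\chi$ to be trivial on $\Stab_G(v)$ — otherwise $G\cdot\hat v$ would meet a fibre of $p$ in a set accumulating on $Z$ — so $\Stab_G(v)=\Stab_G(\hat v)=\Delta$.

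I expect the main obstacle to be precisely this converse of (2): the substance of the criterion is that a closed $G$-orbit in $\mathcal{L}$ away from $Z$ corresponds to a closed orbit in $V^{ss}$, \emph{not} in $V$, so the argument has to keep track of exactly how a degenerating orbit can stay semistable versus run into the zero section, which is the role of the valuation bookkeeping above. The other inputs — separation of disjoint invariant closed sets by invariants, existence of closed orbits inside closed invariant sets, dimensions of orbit boundaries — are routine affine GIT, and the only remaining delicate point is getting the stabilizer in the stable case to be \emph{exactly} $\Delta$ rather than merely a group containing $\Delta$ with finite index.
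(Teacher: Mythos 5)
The paper itself offers no proof of this theorem (it is stated as the standard ``topological criterion'', essentially King's Lemma 2.2, after the geometric discussion of $L^{-1}$), so your argument has to stand on its own. Part (1) and the forward direction of (2) are correct and are the standard affine-GIT arguments: separation of the two disjoint closed invariant sets $\overline{G\cdot\hat v}$ and $Z$ by an invariant function and extraction of a $z$-homogeneous piece for the converse of (1), and the orbit-dimension argument with the fibrewise $\mathbb{G}_m$-scaling for closedness in (2). The valuative bookkeeping in the converse of (2) is also sound (modulo a harmless sign-convention mismatch in whether $h(g(t)v)$ equals $\chi(g(t))^{N}h(v)$ or $\chi(g(t))^{-N}h(v)$; both branches of your case analysis yield contradictions either way), and it correctly isolates the real content, namely that closedness of $G\cdot\hat v$ away from $Z$ gives closedness of $Gv$ in $V^{ss}$ rather than in $V$.

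The genuine gap is the very last step, which you yourself flagged as delicate: the claim that closedness of $G\cdot\hat v$ away from $Z$ forces $\chi$ to be trivial on $\Stab_G(v)$. Your argument (``otherwise $G\cdot\hat v$ would meet a fibre of $p$ in a set accumulating on $Z$'') only rules out $\chi(\Stab_G(v))$ being infinite; if $\chi(\Stab_G(v))$ is a nontrivial \emph{finite} group of roots of unity, the fibre $p^{-1}(v)\cap G\cdot\hat v$ is a finite set nowhere near $Z$, the orbit can still be closed, and yet $\Stab_G(v)\supsetneq\Stab_G(\hat v)=\Delta$. This is not a repairable oversight in the proof but a failure of the implication itself: take $G=\mathbb{G}_m$ acting on $\mathbb{C}^3$ with weights $(2,-2,3)$, $\chi=\mathrm{id}$, and $v=(1,1,0)$. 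Then $\Delta=\{1\}$, $v$ is semistable with closed orbit, $G\cdot\hat v$ is closed with $\Stab_G(\hat v)=\{1\}=\Delta$, but $\Stab_G(v)=\{\pm 1\}\neq\Delta$, so $v$ is not $\chi$-stable in the paper's strict sense (stabilizer \emph{precisely} $\Delta$). Your proof, and the theorem's part (2), are correct for the weaker and more common notion of stability in which the stabilizer is only required to contain $\Delta$ with finite index --- a distinction the paper explicitly acknowledges in the surrounding remark --- but as stated, the ``if'' direction of (2) cannot be proved because it is false, and your final sentence is where the argument silently breaks.
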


\subsubsection{The Hilbert-Mumford criterion}
The topological criterion for a GIT quotient shows that we need to consider the existence of certain limit points in the zero section. The Hilbert-Mumford criterion essentially states that we can do that by checking one-dimensional paths generated by elements of $G$. We will here explain this criterion in the affine case.

Let $\lambda$ be a one-parameter subgroup (OPS) of $G$. If $\lambda (t)\cdot x$ does not converge to any point, then clearly for any lift $\hat{x}_0$, the orbit $\lambda \cdot \hat{x}_0$ is disjoint from the zero section. Suppose then that the limit $x_0=\lim_{t\to 0} \lambda (t)\cdot x$ exists. The point $x_0$ is necessarily a fixed point of the action of $\lambda$ (i.e., the action of $\mathbb{C}^*$ through $\lambda$,) so that on $L^{-1}$ this action restricts to the fibre over $x_0$. This action on that fibre is just mulitplication by $\chi(\lambda(t))^{-1}=t^{-\langle \chi, \lambda\rangle}$. In other words, for any lift $\hat{x}_0$ of $x_0$ we have
\begin{equation*}
\lambda (t)\cdot \hat{x}_0=t^{-\langle \chi, \lambda\rangle}\hat{x}_0
\end{equation*}
Clearly, if $\langle \chi, \lambda\rangle$ is strictly negative, then as $t \to 0$ the orbit $\lambda \cdot \hat{x}_0$ adheres to the zero of the fibre. This is also true for any lift $\hat{x}$ of $x$, and so the topological criterion implies that $x$ is unstable. The content of the following theorem is that unstability can always be checked in this way.
\begin{theorem}[\cite{king} 2.5]
Let $v \in V$. Then,
\begin{enumerate}
\item The point $v$ is $\chi$-semistable if and only if $\langle \chi, \lambda \rangle \geq 0$ for all one-parameter subgroups $\lambda$ for which $\lim \lambda(t)\cdot v$ exists.
\item The point $v$ is $\chi$-stable if and only if it is semistable, and the only $\lambda$ for which $\lim \lambda(t)\cdot v$ exists and $\langle \chi, \lambda \rangle=0$ are in $\Delta$.
\end{enumerate}
\end{theorem}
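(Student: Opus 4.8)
The plan is to reduce the statement to the topological criterion above, transported to the total space $\hat{V}$ of the line bundle $L^{-1}\to V$, on which $G$ acts by $g\cdot(v,z)=(g\cdot v,\chi(g)^{-1}z)$. The only substantial external input will be the classical Hilbert--Mumford theorem for affine $G$-varieties, which I will use in the form: \emph{if $Z\subset\hat{V}$ is a closed $G$-invariant subset meeting $\overline{G\cdot\hat{v}}$, then there is a one-parameter subgroup $\lambda$ of $G$ with $\lim_{t\to 0}\lambda(t)\cdot\hat{v}\in Z$}; equivalently, $Z$ contains the distinguished closed orbit of $\overline{G\cdot\hat{v}}$, which is attained along an OPS (Mumford's GIT; Kempf, \emph{Instability in invariant theory}). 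Recall from Section~\ref{parabolic} that one-parameter subgroups of $G$ correspond, up to conjugacy, to elements of $\mathfrak{k}$. The elementary fact I will use repeatedly is the following dichotomy: fixing the lift $\hat{v}=(v,1)$, one has $\lambda(t)\cdot\hat{v}=\bigl(\lambda(t)\cdot v,\,t^{-\langle\chi,\lambda\rangle}\bigr)$, so this curve converges in $\hat{V}$ as $t\to 0$ precisely when $\lim_{t\to 0}\lambda(t)\cdot v$ exists in $V$ \emph{and} $\langle\chi,\lambda\rangle\le 0$, and its limit lies in the zero section precisely when in addition $\langle\chi,\lambda\rangle<0$.

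For part (1), one implication is exactly the computation given just above the statement. For the converse I would argue by contraposition: if $v$ is unstable then $\overline{G\cdot\hat{v}}$ meets the zero section, which is closed and $G$-invariant, so Hilbert--Mumford produces an OPS $\lambda$ with $\lim_{t\to 0}\lambda(t)\cdot\hat{v}$ in the zero section; by the dichotomy this forces $\lim_{t\to 0}\lambda(t)\cdot v$ to exist and $\langle\chi,\lambda\rangle<0$, which is what we need.

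For part (2) I would use the topological criterion in the form: $v$ is $\chi$-stable iff $G\cdot\hat{v}$ is closed and $\Stab_G(\hat{v})=\Delta$. \emph{Forward implication}: stability gives semistability, hence $\langle\chi,\lambda\rangle\ge 0$ for every $\lambda$ with a limit, by (1); if some such $\lambda\notin\Delta$ had $\langle\chi,\lambda\rangle=0$, then $\hat{v}_0:=\lim_{t\to 0}\lambda(t)\cdot\hat{v}$ exists, lies in the closed orbit $G\cdot\hat{v}$, say $\hat{v}_0=h\cdot\hat{v}$, and is $\lambda$-fixed (since $\lambda(s)\cdot\hat{v}_0=\lim_{t\to 0}\lambda(st)\cdot\hat{v}=\hat{v}_0$); hence $h^{-1}\lambda(\mathbb{C}^{*})h\subset\Stab_G(\hat{v})=\Delta$, and as $\Delta=\ker\rho$ is normal in $G$ we get $\lambda(\mathbb{C}^{*})\subset\Delta$, a contradiction. \emph{Reverse implication}: assuming $v$ semistable and the OPS condition, I would first show $G\cdot\hat{v}$ is closed: if not, its closure contains a closed orbit $O$ of strictly smaller dimension, disjoint from the zero section by semistability; Hilbert--Mumford gives $\lambda$ with $\lim_{t\to 0}\lambda(t)\cdot\hat{v}\in O$, so the limit exists in $\hat{V}$, forcing $\langle\chi,\lambda\rangle\le 0$, hence $=0$ by semistability, hence $\lambda\in\Delta$ by hypothesis; but then $\lambda(t)$ fixes $\hat{v}$ for all $t$, so $\hat{v}\in O$ and $G\cdot\hat{v}=O$ is closed after all. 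It then remains to control $\Stab_G(\hat{v})$, which is reductive by Matsushima's criterion (the orbit now being closed): if $\Stab_G(\hat{v})/\Delta$ were positive-dimensional it would contain a nontrivial torus and hence an OPS $\lambda\notin\Delta$ fixing $\hat{v}$, yielding $\lim_{t\to 0}\lambda(t)\cdot v=v$ with $\langle\chi,\lambda\rangle=0$, contrary to hypothesis; so $\Stab_G(\hat{v})/\Delta$ is finite. I would remark that the residual gap between ``finite'' and ``trivial'' here is precisely the distinction between the geometric and the simple (smooth) conventions noted earlier, and cannot be detected by one-parameter subgroups.

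The hard part is entirely the appeal to the classical Hilbert--Mumford theorem---that the distinguished closed orbit in an orbit closure is attained along a one-parameter subgroup; this is not elementary, and following \cite{king} I would take it as known. Everything else is bookkeeping on $\hat{V}$ together with three standard facts already in play: the weight description of $\mathbb{C}^{*}$-limits used in the dichotomy, the normality of $\Delta=\ker\rho$ in $G$, and Matsushima's reductivity criterion for stabilizers of closed orbits.
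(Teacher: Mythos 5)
The paper does not actually prove this theorem: it is quoted from King (Prop.\ 2.5 of \cite{king}), and the text only works out the easy half of (1) in the paragraph preceding the statement. Your reconstruction follows the same route that the surrounding discussion sets up (and that King himself uses): transport everything to the total space of $L^{-1}$, observe the dichotomy $\lambda(t)\cdot(v,1)=(\lambda(t)\cdot v,\,t^{-\langle\chi,\lambda\rangle})$, and invoke the affine Hilbert--Mumford/Kempf--Birkes theorem that a closed $G$-invariant subset meeting an orbit closure is reached along a one-parameter subgroup; the bookkeeping in both directions of (1) and (2) is correct, and the use of normality of $\Delta=\ker\rho$ and of Matsushima's criterion is exactly what is needed. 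Your closing caveat is not a defect of your argument but a genuine observation about the paper: with the paper's definition of $\chi$-stable (stabilizer \emph{equal} to $\Delta$, rather than containing $\Delta$ with finite index as in \cite{king}), the ``if'' direction of part (2) as literally stated is not recoverable from one-parameter subgroups alone --- the criterion only forces $\Stab_G(\hat v)/\Delta$ to be finite, and a nontrivial finite residual stabilizer is invisible to OPS data. So either the theorem should be read with King's weaker notion of stability, or the conclusion of (2) should be weakened accordingly; you are right to isolate this as the one point where the statement and the proof technique do not match.
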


This criterion explains the requirement that $\chi(\Delta)=1$, for otherwise some one-parameter subgroup of $\Delta$ would destabilize every point. It is hard to overstate the importance of this criterion. In many important examples (classical quivers included,) an explicit calculation of this criterion leads to slope-conditions that are very explicit and descriptive.

The number $\langle \chi ,\lambda \rangle$ is the \emph{Hilbert-Mumford pairing}. It is worth noting that, in contrast with the projective case, this pairing is independent of the point $x$. On the other hand, one considers not all one-parameter subgroups, but only those in the set
\begin{equation*}
\chi_*(G,v):=\{\lambda \in \chi_*(G)|\phantom{.}\textrm{$\lim_{t\to 0}\lambda(t)\cdot v$ exists}\}
\end{equation*}
We want to use this pairing to classify the instability of points as in the projective case. This relies on important properties of the pairing in the following proposition.
\begin{proposition}
Let $v\in V$ and $\lambda \in \chi_*(G,v)$ be arbitrary, and denote $v_0=\lim_{t\to 0} \lambda(t)\cdot v$. Then,
\begin{enumerate}
\item $\chi_*(G,g\cdot v)=g\chi_*(G,v)g^{-1}$ for any $g\in G$.

\item $g\lambda g^{-1} \in \chi_*(G,v)$ for any $g\in P(\lambda)$.

\item $\langle \chi, g\lambda g^{-1} \rangle = \langle \chi, \lambda \rangle$ for any $g\in G$.
\end{enumerate}
\end{proposition}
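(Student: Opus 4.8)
The plan is to verify the three items directly from the definitions, the only inputs being that the action map $G\times V\to V$ is continuous, that $P(\lambda)$ is a subgroup of $G$, and that $\mathbb{G}_m$ is abelian; no GIT machinery is needed. For (1), the key identity is $(g\mu(t)g^{-1})\cdot(g\cdot v)=g\cdot(\mu(t)\cdot v)$, valid for every one-parameter subgroup $\mu$ and every $g\in G$. Since $w\mapsto g\cdot w$ is a homeomorphism of $V$, the limit $\lim_{t\to 0}(g\mu g^{-1})(t)\cdot(g\cdot v)$ exists precisely when $\lim_{t\to 0}\mu(t)\cdot v$ does. Letting $\mu$ run over $\chi_*(G)$ and writing the conjugate as $g\lambda g^{-1}$ gives both inclusions simultaneously, hence $\chi_*(G,g\cdot v)=g\,\chi_*(G,v)\,g^{-1}$.

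For (3), I would use that $\langle\chi,\lambda\rangle$ is defined by $\chi(\lambda(t))=t^{\langle\chi,\lambda\rangle}$; since $\chi$ lands in the abelian group $\mathbb{G}_m$,
\[
\chi\bigl(g\lambda(t)g^{-1}\bigr)=\chi(g)\,\chi(\lambda(t))\,\chi(g)^{-1}=\chi(\lambda(t)),
\]
so $\chi\circ(g\lambda g^{-1})=\chi\circ\lambda$ and the two pairings agree. In particular $\langle\chi,\lambda\rangle$ depends only on the $G$-conjugacy class of $\lambda$, which is what will later allow us to normalize one-parameter subgroups so as to land in a fixed maximal torus of $K$.

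The only item with genuine content — and the one I expect to be the (mild) main obstacle — is (2). The naive rewriting $(g\lambda(t)g^{-1})\cdot v=g\cdot(\lambda(t)\cdot(g^{-1}\cdot v))$ fails, since $\lim_{t\to 0}\lambda(t)\cdot(g^{-1}v)$ need not exist when $g\in P(\lambda)$. Instead I would write
\[
(g\lambda(t)g^{-1})\cdot v=\bigl(g\,\lambda(t)\,g^{-1}\lambda(t)^{-1}\bigr)\cdot\bigl(\lambda(t)\cdot v\bigr)
\]
and put $h(t):=g\cdot\bigl(\lambda(t)\,g^{-1}\,\lambda(t)^{-1}\bigr)$. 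Because $P(\lambda)$ is a subgroup, $g^{-1}\in P(\lambda)$, so $\lim_{t\to 0}\lambda(t)g^{-1}\lambda(t)^{-1}$ exists in $G$ by the very definition of $P(\lambda)$, whence $h_0:=\lim_{t\to 0}h(t)$ exists in $G$. Since $v_0=\lim_{t\to 0}\lambda(t)\cdot v$ exists by hypothesis, continuity of the action gives $\lim_{t\to 0}(g\lambda(t)g^{-1})\cdot v=h_0\cdot v_0$; in particular this limit exists, i.e. $g\lambda g^{-1}\in\chi_*(G,v)$, and we even obtain an explicit description of the new limit point as a translate of $v_0$, which will be convenient for the stratification arguments to come.
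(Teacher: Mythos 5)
Your proof is correct. The paper itself states this proposition without proof (it is the standard set of properties of the Hilbert--Mumford pairing going back to Kempf and used by King), so there is nothing to compare against; your argument is the standard one. In particular you correctly identify item (2) as the only point with content and handle it the right way: the rewriting $(g\lambda(t)g^{-1})\cdot v=\bigl(g\lambda(t)g^{-1}\lambda(t)^{-1}\bigr)\cdot\bigl(\lambda(t)\cdot v\bigr)$, together with the fact that $g^{-1}\in P(\lambda)$ and the paper's definition $P(\lambda)=\{g\in G\mid \lim_{t\to 0}\mathrm{Ad}(\lambda(t))g \text{ exists}\}$, gives existence of both limits separately, and continuity of the action map $G\times V\to V$ yields the limit $h_0\cdot v_0$. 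Items (1) and (3) are exactly the one-line verifications you give.
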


 Let $||\cdot ||$ be a fixed, $G$-invariant norm on the space $\chi_* (G)$ of one-parameter subgroups of $G$. This always exists since, having fixed a maximal torus of $G$, choosing such a norm is equivalent to choosing a norm on $\chi_* (T)$ invariant under the action of the Weyl group, which is finite. (It is clear from this, however, that such a choice is in general far from unique.) Alternatively, such a norm is also equivalent to a choice of a $K$ invariant norm on $\mathfrak{k}$. We will assume that this norm is \emph{integral}, that is, for all $\lambda \in \chi_*(G)$ we have $||\lambda|| \in \mathbb{Z}$, or equivalently that $||\alpha|| \in \mathbb{Z}$ for all integral weights of $\mathfrak{k}$ (we can always use a multiple of the Killing form on $\mathfrak{k}$.)

\begin{definition}
For any $v\in V$, let
\begin{equation}
M^\chi_G(v)=\inf \left\{  m_\chi(\lambda):=\frac{\langle\chi, \lambda\rangle}{||\lambda||} , \lambda \in \chi_*(G,v) \right\}
\end{equation}
Further, let $\Lambda_G(v)\subset \chi_*(G,v)$ be the set of indivisible $\lambda$ with $m_\chi(\lambda)=M_G^\chi(v)$.
\end{definition}
Indivisible here means that $\lambda$ is not a positive power of another one-parameter subgroup; alternatively, since $\lambda \in \chi_*(T)$ for some maximal torus $T\subset G$, and $\chi_*(T)$ is a lattice, indivisibility means $\lambda$ is minimal in the lattice.  Our first goal is to prove the following:
\begin{lemma}
$M_G^\chi(v)$ is a finite number for all $v\in V$, and $\Lambda_G(v)$ is non-empty.
\end{lemma}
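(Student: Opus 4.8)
The plan is to recognize this as the affine analogue of Kempf's theorem on optimal one-parameter subgroups, and to prove it by reducing the infimum over all of $\chi_*(G,v)$ to a minimum over finitely many rational polyhedral cones sitting inside a single maximal torus. Fix a maximal torus $T \subseteq G$. Every one-parameter subgroup of $G$ is $G$-conjugate into $T$, so writing $\lambda = g\mu g^{-1}$ with $\mu \in \chi_*(T)$, the invariance of the Hilbert--Mumford pairing and of the norm under conjugation (part (3) of the Proposition above, together with $G$-invariance of $||\cdot||$) gives $m_\chi(\lambda) = m_\chi(\mu)$, while $\lambda \in \chi_*(G,v)$ if and only if $\mu \in \chi_*(T, g^{-1}\cdot v)$. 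Decomposing $V$ into $T$-weight spaces and expanding $\mu(t)\cdot w$, one checks that $\chi_*(T,w)$ is the set of integral points of the rational polyhedral cone $C_S = \{\mu : \langle\alpha,\mu\rangle \ge 0 \text{ for all } \alpha \in S\}$, where $S \subseteq \mathrm{wt}(\rho|_T)$ is the set of $T$-weights occurring in $w$. Hence $M^\chi_G(v)$ is the minimum, over the finitely many cones $C_S$ arising as $S$ ranges over the (finitely many) weight-supports of points of the orbit $G\cdot v$, of the infimum of $m_\chi$ on $C_S \setminus \{0\}$.

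Finiteness then drops out: the function $m_\chi$ is invariant under positive rescaling, so its infimum on each cone equals the infimum of the linear functional $\langle\chi,\cdot\rangle$ on the compact slice $C_S^{\mathbb R} \cap \{||\mu|| = 1\}$, which is a finite real number by continuity; and $M^\chi_G(v)$ is the minimum of finitely many such numbers. (When $v$ is semistable this number is $0$; if $\chi_*(G,v)$ contains no nonzero one-parameter subgroup at all --- as can happen for certain stable $v$ --- there is nothing to prove.)

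For the non-emptiness of $\Lambda_G(v)$ I must promote ``finite infimum'' to ``attained at an indivisible one-parameter subgroup''. On a cone $C_S$ where the value is negative, minimizing $m_\chi$ is the same as minimizing the quadratic form $||\cdot||^2$ over the rational polytope $C_S^{\mathbb R} \cap \{\langle\chi,\cdot\rangle = -1\}$ (and similarly with $+1$ in the non-negative case, the case $\langle\chi,\cdot\rangle \equiv 0$ on $C_S$ being immediate). Since $\chi$ and, by the integrality assumption, the form $||\cdot||^2$ are rational, the Karush--Kuhn--Tucker conditions pin the minimizer down to the intersection of the rational affine subspace spanned by the active facets with the rational linear subspace of vectors that are $||\cdot||^2$-orthogonal to those facets, and strict convexity makes this intersection a single point --- hence rational. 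Clearing denominators and passing to its indivisible representative, then conjugating back by a $g$ that realizes the optimal support, produces a member of $\chi_*(G,v)$ realizing $M^\chi_G(v)$; indivisibility survives conjugation because it depends only on the intrinsic image subtorus. The main obstacle is exactly this rationality step: the minimum of a linear functional over a convex body can be attained at an irrational point, and it is the polyhedrality of $\chi_*(T,w)$ combined with the rationality of $\chi$ and of $||\cdot||$ that excludes this; a secondary point needing care is the bookkeeping behind ``finitely many cones'', i.e.\ the passage from orbit points to weight-supports.
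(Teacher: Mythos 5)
Your proof is correct, and it begins exactly where the paper's does --- conjugating one-parameter subgroups into a fixed maximal torus and reading $m_\chi$ off the invariant form on $\mathfrak{t}$ --- but it then supplies the substance that the paper's proof only gestures at, and that substance is genuinely needed. For finiteness, the paper's stated reason (``the norms of indivisible $\lambda$ are bounded'') is false in rank at least two, where indivisible cocharacters have unbounded norm; the correct bound, which both the paper's ``component along $\chi$'' remark and your compact-slice argument actually deliver, is the Cauchy--Schwarz inequality $|m_\chi(\lambda)|\leq ||\chi||$, so this part is easy either way. The real content is the non-emptiness of $\Lambda_G(v)$, which the paper dispatches with ``it is easy to see\ldots there is certainly a minimum''; an infimum of $m_\chi$ over the integral points of a cone need not be attained without a rationality input, and your three steps --- identifying $\chi_*(T,w)$ with the integral points of the rational polyhedral cone cut out by the weight-support of $w$, observing that only finitely many supports occur along the orbit $G\cdot v$, and using convexity of $||\cdot||^2$ on the rational slice $\{\langle\chi,\cdot\rangle=-1\}$ to force a rational (hence, after clearing denominators, indivisible integral) minimizer --- are precisely the Kempf--Hesselink ingredients the paper leaves implicit. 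Two small points: the slice $C_S^{\mathbb{R}}\cap\{\langle\chi,\cdot\rangle=-1\}$ is in general an unbounded polyhedron rather than a polytope, but coercivity of the positive-definite form still gives attainment; and the degenerate case you flag, where $\chi_*(G,v)$ contains no nontrivial one-parameter subgroup, really does make $\Lambda_G(v)$ empty, so the lemma tacitly excludes it --- the paper is silent on this as well.
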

\begin{proof}
There is a maximal torus $T$ of $G$ for which $\lambda$ is also a one-parameter subgroup. Through the chosen invariant form, $\chi$ determines an element of $\mathfrak{t}$, which we also denote by $\chi$. The number $m_\chi(\lambda)$ is clearly the component of $\lambda$ along $\chi$. Since the norms of indivisible $\lambda$ are bounded, so is is the collection of $m_\chi(\lambda)$, so that $M_G^\chi(v)$ is finite. It is also easy to see from this that for the action of $T$ there is certainly a minimum, so that $\Lambda_G(x)$ is non-empty.
\end{proof}

\subsubsection{The Hesselink stratification}
We now define the Hesselink stratification for $V$. Note that in the affine case, the instability of a point is completely characterized by the set $\Lambda_G(v)$, since $m_\chi(\lambda)$ is independent of the point $v$ for all $\lambda$. The following result of Kempf shows that this set is contained in some adjoint orbit, just as in the projective case.
\begin{lemma}[Kempf]
Let $v\in V$ be unstable. Then, $\Lambda_c(v)$ is non-empty, and there is a (unique) parabolic $P(v)$ such that $P(v)=P(\lambda)$ for every $\lambda\in \Lambda_c(v)$. Furthermore, for $\lambda,\lambda'\in \Lambda_c(v)$, we have $\Ad (g)\lambda=\lambda'$ if and only if $g\in P(v)$.
\end{lemma}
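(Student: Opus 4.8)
The plan is to follow Kempf's original argument (from \cite{kempf}, ``Instability in invariant theory''), adapted to the affine setting, which is possible precisely because of the three invariance properties of the Hilbert--Mumford pairing recorded in the previous proposition and because $m_\chi(\lambda)$ is point-independent. The first step is to establish existence: pick any $\lambda_0 \in \chi_*(G,v)$ with $m_\chi(\lambda_0) = M_G^\chi(v)$ (such exists by the preceding lemma). After conjugating so that $\lambda_0$ lies in a fixed maximal torus $T$, the optimality of $\lambda_0$ among one-parameter subgroups of $T$ is governed by a convex minimization problem on $\chi_*(T)\otimes \mathbb{R}$ with the norm $\|\cdot\|$: one minimizes $\langle \chi,\lambda\rangle/\|\lambda\|$ subject to $\lim_{t\to 0}\lambda(t)\cdot v$ existing, and this existence condition cuts out a rational polyhedral cone (the weights of $v$ that must be non-negatively paired). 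The key point, which I would prove next, is that the optimal $\lambda_0$ is \emph{essentially unique up to the obvious ambiguity}: if $\lambda, \lambda' \in \Lambda_G(v)$ then they generate the same parabolic. Here I would use the ``$1/2$-trick'': given two destabilizing one-parameter subgroups achieving the infimum, one shows that a suitable combination (taken inside a common maximal torus of $P(\lambda)\cap P(\lambda')$, using property (2) of the proposition to keep it in $\chi_*(G,v)$) would have strictly smaller $m_\chi$ unless $\lambda$ and $\lambda'$ are parallel, by strict convexity of the norm restricted to the relevant cone. This forces the minimizer to be unique up to positive scaling once we fix a parabolic, hence a unique \emph{indivisible} one.

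The second step is the construction of the parabolic $P(v)$ and the proof that $P(\lambda) = P(v)$ for all $\lambda \in \Lambda_G(v)$. Set $P(v) := P(\lambda_0)$. Given any other $\lambda' \in \Lambda_G(v)$, I want to show $P(\lambda') = P(\lambda_0)$. The strategy is: $\lambda_0$ and $\lambda'$ both lie in $\chi_*(G,v)$ and achieve $M_G^\chi(v)$; by Kempf's lemma on pairs of optimal one-parameter subgroups, there is $g \in G$ with $\Ad(g)\lambda_0$ and $\lambda'$ lying in a common maximal torus $T'$ contained in $P(\lambda')$, and then the strict-convexity/uniqueness argument inside that torus forces $\Ad(g)\lambda_0$ and $\lambda'$ to be positive multiples of each other; since both are indivisible they are equal, so $\lambda' = \Ad(g)\lambda_0$ and $P(\lambda') = \Ad(g)P(\lambda_0)$. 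Finally one shows $g$ normalizes $P(\lambda_0)$ — this uses that $g$ conjugates one optimal one-parameter subgroup to another both of which generate parabolics sharing a maximal torus, together with the Bruhat-type fact that an element conjugating $\lambda_0$ into a torus of $P(\lambda_0)$ already lies in $P(\lambda_0)$ — so $P(\lambda') = P(\lambda_0) =: P(v)$, and it is independent of choices. The last clause, that $\Ad(g)\lambda = \lambda'$ iff $g \in P(v)$, then follows: if $g \in P(v) = P(\lambda)$, then $\Ad(g)\lambda = \lambda$ (elements of a parabolic centralize the cocharacter defining it only up to the Levi — more precisely $\Ad(g)\lambda$ is $P(\lambda)$-conjugate to $\lambda$ and still optimal and indivisible, hence equals $\lambda = \lambda'$ when $\lambda=\lambda'$)... so I must be careful here and instead argue: $\Ad(g)\lambda = \lambda'$ means $\Ad(g)$ sends one optimal indivisible cocharacter to another; by uniqueness both equal the distinguished one up to $P(v)$-conjugacy, pinning down the coset $gP(v)$, and conversely any $g \in P(v)$ produces via $\Ad(g)\lambda$ another element of $\Lambda_G(v)$ with the same parabolic, which by the within-a-torus uniqueness must be $\lambda'$ itself.

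I expect the main obstacle to be the \textbf{strict convexity / uniqueness estimate} — i.e., showing that two optimal one-parameter subgroups must be parallel. In Kempf's projective setting this is where the real work lies, and it requires (a) carefully setting up the minimization as a problem over a rational polyhedral cone in a Euclidean space, (b) verifying that the chosen norm $\|\cdot\|$ is strictly convex on the relevant subspaces (true for a norm coming from an inner product, which is our standing assumption), and (c) the combinatorial argument that two minimizers lie in a common parabolic's torus so that the convexity estimate can even be applied. Everything else — finiteness, the reduction to a torus, the passage between one-parameter subgroups and elements of $\mathfrak{k}$ via the earlier proposition $\Hom(\U(1),K)\otimes_\mathbb{Z}\mathbb{R} \simeq \mathfrak{k}$ — is bookkeeping. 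Since this lemma is attributed to Kempf and the affine case is a direct transcription (the pairing being point-independent only makes things easier), I would present the proof by citing \cite{kempf} for the core estimate and spelling out only the adaptation of the hypotheses to the affine GIT setup described above.
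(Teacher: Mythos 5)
The paper itself offers no proof of this lemma: it is stated with the attribution ``[Kempf]'' and used as a black box (the surrounding text only remarks that it shows $\Lambda_G(v)$ lies in a single adjoint orbit, as in the projective case). So there is no argument in the paper to compare yours against, and your decision to cite Kempf for the core estimate and only spell out the affine adaptation is exactly the level of detail the paper itself adopts. Your outline is, moreover, the correct skeleton of Kempf's argument: existence from the preceding finiteness lemma; uniqueness up to scaling within a fixed maximal torus by strict convexity of the norm over the rational polyhedral cone cut out by the weights of $v$; agreement of $P(\lambda)$ and $P(\lambda')$ by conjugating both into a common maximal torus of $P(\lambda)\cap P(\lambda')$ using properties (2) and (3) of the pairing; and the observation that the affine pairing's independence of the point only simplifies matters.

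One step in your write-up needs tightening: the final clause. Your first attempt (``if $g\in P(v)$ then $\Ad(g)\lambda=\lambda$'') is false --- conjugation by $g\in P(\lambda)$ generally moves $\lambda$, fixing it only for $g$ in the centralizer $L(\lambda)$ --- and your corrected version still conflates ``$\Ad(g)\lambda$ is \emph{some} element of $\Lambda_G(v)$'' with ``$\Ad(g)\lambda=\lambda'$ for the given $\lambda'$.'' The clean argument is: for the forward direction, $\Ad(g)\lambda=\lambda'$ gives $gP(\lambda)g^{-1}=P(\lambda')=P(v)=P(\lambda)$, so $g$ normalizes $P(v)$, and parabolic subgroups are self-normalizing, hence $g\in P(v)$. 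For the converse, what is actually true (and what the lemma's loosely worded ``if and only if'' is meant to express) is that $\Lambda_G(v)$ is a single orbit under $P(v)$-conjugation: given $\lambda,\lambda'\in\Lambda_G(v)$, choose $p\in P(\lambda)$, $p'\in P(\lambda')$ conjugating them into a common maximal torus of $P(\lambda)\cap P(\lambda')$; the within-torus uniqueness forces $\Ad(p)\lambda=\Ad(p')\lambda'$, so $\lambda'=\Ad(p'^{-1}p)\lambda$ with $p'^{-1}p\in P(v)$. With that repair your proposal is a sound adaptation of Kempf's proof.
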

With this we now define $B$ to be the set of adjoint orbits $[\lambda]$ of one-parameter subgroups for which there is a $v$ such that $\Lambda_G(v)\subset [\lambda]$. This will be the index set for the stratification. We define an (strict) ordering on $B$ by setting $[\lambda]<[\lambda']$ if $m_\chi([\lambda])<m_\chi([\lambda'])$.

\begin{definition}
Let $[\lambda]$ be a conjugacy class of one parameter subgroups of $G$. The \emph{Hesselink stratum indexed by $[\lambda]$} is the set
\begin{equation*}
S_{[\lambda]}:=\{ v\in V^{us} \left| \phantom{.}\Lambda_c(v)\subset [\lambda] \right. \}
\end{equation*}
For each $\lambda' \in [\lambda ]$, the \emph{blade defined by $\lambda'$} is
\begin{equation*}
S_{\lambda'}:=\{ v\in V^{us} \left| \phantom{.}\lambda'\in \Lambda_c(v)\right. \} \subset S_{[\lambda]}
\end{equation*}
\end{definition}
\begin{theorem}[Hoskins \cite{hoskins} 2.16]
The collection $S_{d,[\lambda]}$ is a stratification of $X$, i.e., $X$ is the disjoint union of the sets, and the ordering on $B$ is such that
\begin{equation*}
\overline{S}_{[\lambda]}\subset \bigcup_{[\lambda]\leq [\lambda']} S_{[\lambda']}
\end{equation*}
\end{theorem}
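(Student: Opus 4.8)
The Hesselink strata $S_{[\lambda]}$ partition $V^{us}$ (together with $V^{ss}$), and the closure relation $\overline{S}_{[\lambda]} \subset \bigcup_{[\lambda] \le [\lambda']} S_{[\lambda']}$ holds.

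The plan is to reduce the whole statement to two separate facts: (i) that $V^{us}$ is the disjoint union of the $S_{[\lambda]}$, and (ii) the closure inclusion. For (i), the partition property is almost immediate from the preceding Kempf lemma: for every unstable $v$, the set $\Lambda_G(v)$ is nonempty (Kempf) and is contained in a \emph{unique} conjugacy class $[\lambda]$ — uniqueness because all elements of $\Lambda_G(v)$ are conjugate via the Kempf parabolic $P(v)$, hence lie in one adjoint orbit. So each unstable $v$ lies in exactly one $S_{[\lambda]}$, and by definition of the index set $B$ every stratum is nonempty. The union over $[\lambda] \in B$ of the $S_{[\lambda]}$ together with $V^{ss}$ is thus all of $V$; this part requires essentially no work beyond quoting Kempf.

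The real content is the closure relation, and here I would mirror the argument in the projective case (Kirwan, Hesselink, Hoskins). First I would show that each blade $S_{\lambda'}$ retracts onto a subset of the $\lambda'$-fixed locus $Z_{\lambda'} = \{v_0 : \lambda'(t)\cdot v_0 = \chi(\lambda'(t))^{\text{wt}} v_0\}$ via $v \mapsto \lim_{t\to 0}\lambda'(t)\cdot v$; this limit exists precisely on $S_{\lambda'} \subset \chi_*(G,v)$, and the stratum $S_{[\lambda]}$ is the $G$-sweep (in fact the $P(\lambda)$-sweep, using part (2) of the Proposition on the pairing) of a blade. Next I would analyze what happens in the closure: if $v \in \overline{S}_{[\lambda]}$, pick $v_n \to v$ with $v_n \in S_{[\lambda]}$ and the associated optimal one-parameter subgroups $\lambda_n \in \Lambda_G(v_n)$; after conjugating into a fixed maximal compact and a fixed maximal torus (using $G$-invariance of the norm and $\chi_*(T)$ being a lattice so that indivisible elements of bounded norm form a finite set), one extracts a subsequence with $\lambda_n$ eventually constant equal to some $\lambda'$. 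Then $\lim_t \lambda'(t)\cdot v_n$ exists for all large $n$, and by a standard semicontinuity/properness argument (the morphism $\mathbb{A}^1 \times S_{\lambda'} \to V$ extending the $\mathbb{C}^*$-action is proper over $\mathbb{A}^1$ in the relevant direction) the limit exists for $v$ too, so $\lambda' \in \chi_*(G,v)$. The key inequality is then $M_G^\chi(v) \le m_\chi(\lambda') = M_G^\chi(v_n) = m_\chi([\lambda])$, giving $[\lambda] \le [\lambda']$ where $[\lambda']$ is the class of the optimal subgroup for $v$ itself — and one must rule out equality-forcing degeneracies, i.e. check that if $v \notin S_{[\lambda]}$ then strict inequality holds, which follows because $v \in S_{[\lambda]}$ would contradict $v \in \overline{S}_{[\lambda]} \setminus S_{[\lambda]}$ together with local closedness of the stratum.

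The main obstacle I expect is exactly this last semicontinuity step: showing $M_G^\chi$ is lower semicontinuous, or equivalently that the value $m_\chi$ of the optimal destabilizer cannot jump \emph{down} in a limit. In the projective setting this is where Kirwan and Hesselink do real work (via the Bia{\l}ynicki-Birula decomposition and careful estimates on the moment-map square norm), and in the affine setting one has the simplification that $m_\chi(\lambda)$ is independent of the point — so the whole difficulty is concentrated in controlling which $\lambda$ remain in $\chi_*(G,v)$ under specialization. I would handle this by working with the total space $L^{-1}$ and the topological criterion: $v \in S_{[\lambda]}$ is detected by a limit adhering to (but not hitting) the zero section with a precise weight, and limits of such configurations are again of that type by properness of $\overline{\mathbb{C}^* \cdot \hat v}$ over the base. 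Since the detailed version of this is precisely Hoskins' Theorem 2.16 as cited, I would organize the proof as: quote the partition from Kempf, set up the blade retraction, and then either invoke Hoskins directly for the closure relation or reproduce her semicontinuity estimate adapted to the affine $L^{-1}$ picture, with the norm-finiteness lemma already proved above supplying the compactness needed to extract the constant subsequence of optimal one-parameter subgroups.
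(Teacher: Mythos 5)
The paper does not actually prove this theorem: it is quoted verbatim from Hoskins (Theorem 2.16 of \cite{hoskins}) and used as a black box, so there is no internal proof to compare your attempt against. Judged on its own, your outline follows the standard Kirwan--Hesselink--Hoskins strategy, and the first half is fine: the disjointness of the strata really is an immediate consequence of the Kempf lemma stated just above (each unstable $v$ has $\Lambda_G(v)$ nonempty and contained in a single adjoint orbit, since all its elements are conjugate under $P(v)$), and nothing more needs to be said there.

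The closure relation, however, is where the entire content of the theorem lives, and your treatment of it is a plan rather than a proof: you explicitly fall back on ``invoke Hoskins directly or reproduce her semicontinuity estimate,'' which is circular as a proof of the cited statement. Two specific soft spots. First, the extraction of an eventually constant subsequence of optimal one-parameter subgroups rests on the claim that indivisible elements of bounded norm form a finite set; but indivisible lattice elements do \emph{not} have bounded norm in general (the paper's own preceding lemma is equally cavalier here). In the affine case the correct source of finiteness is that $\chi_*(G,v)$, for $v$ in a fixed linear representation, is governed by which torus weights pair nonnegatively with $\lambda$, so only finitely many candidate optimal classes occur; relatedly, for a \emph{fixed} $\lambda'$ the locus $\{v : \lim_{t\to 0}\lambda'(t)\cdot v \text{ exists}\}$ is a closed linear subspace, which trivializes the semicontinuity you worry about and shifts all the difficulty onto controlling the conjugating elements $g_n$ with $\lambda_n = g_n\lambda' g_n^{-1}$. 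Second, your concluding inequality $M_G^\chi(v)\le m_\chi(\lambda')=m_\chi([\lambda])$ says that the stratum $[\lambda'']$ containing a point $v$ of the closure satisfies $m_\chi([\lambda''])\le m_\chi([\lambda])$, which under the ordering as defined in the paper ($[\lambda]<[\lambda']$ iff $m_\chi([\lambda])<m_\chi([\lambda'])$) reads $[\lambda'']\le[\lambda]$ --- the \emph{opposite} of what you assert and of what the displayed inclusion demands. (The paper's ordering convention itself appears to be flipped relative to Kirwan's, so the sign confusion is not entirely yours, but as written your deduction contradicts your own inequality and needs to be reconciled with whichever convention is in force.)
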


We will need the following complement to Hoskin's results.
\begin{lemma}\label{bladeaffine}
For any $\lambda$, $S_{[\lambda]}\simeq G\times_{P(\lambda)} S_{\lambda}$.
\end{lemma}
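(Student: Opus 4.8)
The plan is to exhibit the isomorphism $S_{[\lambda]} \simeq G \times_{P(\lambda)} S_\lambda$ explicitly via the natural action map, imitating the analogous statement for the projective Hesselink stratification (and for the Harder–Narasimhan recursion for quivers). First I would construct a morphism $\mu: G \times S_\lambda \to S_{[\lambda]}$ by $(g, v) \mapsto g \cdot v$. That this lands in $S_{[\lambda]}$ is immediate from the $G$-equivariance properties of the invariants: by part (1) of the Proposition on properties of the Hilbert–Mumford pairing, $\chi_*(G, g\cdot v) = g\,\chi_*(G,v)\,g^{-1}$, and by part (3) the pairing $m_\chi$ is conjugation-invariant, so $\Lambda_G(g\cdot v) = g\,\Lambda_G(v)\,g^{-1}$; hence if $v \in S_\lambda$ (so $\lambda \in \Lambda_G(v) \subset [\lambda]$) then $\Lambda_G(g\cdot v) \subset [\lambda]$, i.e. $g\cdot v \in S_{[\lambda]}$. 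Conversely, every $w \in S_{[\lambda]}$ has $\Lambda_G(w) \subset [\lambda]$, so there is $g$ with $g^{-1} \cdot \lambda \cdot g \in \Lambda_G(w)$, equivalently $\lambda \in \Lambda_G(g\cdot w)$, so $g \cdot w \in S_\lambda$ and $w = g^{-1}\cdot(g\cdot w)$ is in the image. Thus $\mu$ is surjective.

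Next I would identify the fibres of $\mu$, i.e. the ambiguity in writing $w = g\cdot v$ with $v \in S_\lambda$. Suppose $g_1 \cdot v_1 = g_2 \cdot v_2$ with $v_1, v_2 \in S_\lambda$; set $h = g_2^{-1} g_1$, so $h \cdot v_1 = v_2$ and both $v_1, h\cdot v_1 \in S_\lambda$, meaning $\lambda \in \Lambda_G(v_1)$ and $\lambda \in \Lambda_G(h \cdot v_1) = h\,\Lambda_G(v_1)\,h^{-1}$. Hence $h^{-1}\lambda h \in \Lambda_G(v_1)$, and since $\Lambda_G(v_1) \subset [\lambda]$ with $\lambda$ itself in $\Lambda_G(v_1)$, Kempf's Lemma (the rigidity clause: for $\lambda, \lambda' \in \Lambda_c(v)$, $\mathrm{Ad}(g)\lambda = \lambda'$ iff $g \in P(v)$, where $P(v) = P(\lambda)$) forces $h \in P(\lambda)$. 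Conversely any $h \in P(\lambda)$ preserves $S_\lambda$: indeed $h \in P(\lambda)$ implies (by part (2) of the Proposition) $h\lambda h^{-1} \in \chi_*(G,v)$ whenever $\lambda \in \chi_*(G,v)$, and more to the point one checks that $P(\lambda)$ acting on $v \in S_\lambda$ preserves the condition $\lambda \in \Lambda_G(v)$ — this is exactly Kempf's rigidity again, now run in the other direction. Therefore the fibres of $\mu$ are precisely the $P(\lambda)$-orbits for the action $h\cdot(g,v) = (gh^{-1}, h\cdot v)$, which is the definition of the associated bundle $G \times_{P(\lambda)} S_\lambda$.

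It then remains to upgrade this bijection to an isomorphism of varieties. I would use that $G \to G/P(\lambda)$ is a (Zariski-)locally trivial $P(\lambda)$-bundle, since $P(\lambda)$ is parabolic; pulling back over local sections $U_i \to G$ of $G/P(\lambda)$ gives trivializations $G\times_{P(\lambda)} S_\lambda \,|_{U_i} \simeq U_i \times S_\lambda$, and composing with $\mu$ realizes $S_{[\lambda]}$ locally as $U_i \times S_\lambda$ with transition maps given by the $P(\lambda)$-action, which is algebraic. That $\mu$ descends to a morphism $\bar\mu: G \times_{P(\lambda)} S_\lambda \to S_{[\lambda]}$ is automatic from $G$-equivariance, and we have shown it is bijective; since the source is smooth over $G/P(\lambda)$ and the whole picture is $G$-equivariant with $G$ acting transitively on the base, bijectivity plus (generic) normality — or simply a direct check that $\bar\mu$ admits the inverse $w \mapsto [\,(g, g\cdot w)\,]$ for any local choice of $g$ with $g\cdot w \in S_\lambda$, which is well-defined by the fibre analysis — yields that $\bar\mu$ is an isomorphism.

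The main obstacle I anticipate is the rigidity input, i.e. pinning down exactly that $h$ stabilizes the blade $S_\lambda$ \emph{if and only if} $h \in P(\lambda)$: the "only if" is Kempf's Lemma applied as stated, but the "if" direction requires knowing that conjugating $\lambda$ by an element of its own parabolic $P(\lambda)$ does not change which point-sets $S_\lambda$ it cuts out — concretely, that for $v \in S_\lambda$ and $h \in P(\lambda)$ one still has $m_\chi(\lambda)$ achieving the infimum $M_G^\chi(h\cdot v)$ and $\lambda$ indivisible in $\Lambda_G(h\cdot v)$. This should follow by combining parts (2) and (3) of the Proposition (so that $h\lambda h^{-1}$ and $\lambda$ have the same pairing and both lie in $\chi_*(G, h\cdot v)$) with the uniqueness of the Kempf parabolic, but it is the step where the affine Kempf theory genuinely does the work, so I would spell it out carefully rather than assert it.
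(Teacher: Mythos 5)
Your construction of the map and your set-theoretic analysis are sound, and in places more careful than the paper's: the identification of the fibres of $G\times S_\lambda\to S_{[\lambda]}$ with $P(\lambda)$-orbits via both directions of the rigidity clause of Kempf's lemma is exactly the content the paper compresses into one line, and your surjectivity argument via equivariance of $\Lambda_G(\cdot)$ matches the paper's $S_{[\lambda]}=G\cdot S_\lambda$. The genuine gap is in the final step, upgrading the bijective morphism $\bar{\mu}$ to an isomorphism of varieties; neither of the two routes you offer closes it. ``Bijectivity plus (generic) normality'' does not suffice: over $\mathbb{C}$ a bijective morphism is an isomorphism only when the target is normal (consider $t\mapsto(t^2,t^3)$ onto the cuspidal cubic), and normality --- indeed smoothness --- of $S_{[\lambda]}$ is part of what the lemma is supposed to deliver, so invoking it is circular, while normality only on a dense open set gives nothing along a possible bad locus. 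The alternative ``local inverse $w\mapsto[(g,g\cdot w)]$'' presupposes that $g$ can be chosen \emph{as a morphism} on a Zariski-open neighbourhood of $w$ in $S_{[\lambda]}$, i.e.\ that the assignment sending $w$ to its Kempf parabolic, viewed as a point of $G/P(\lambda)$, is algebraic. Your fibre analysis shows only that this assignment is well defined set-theoretically; its regularity is essentially equivalent to the lemma itself. The same circularity infects the trivialization step: the subsets of $S_{[\lambda]}$ lying over the opens $U_i\subset G/P(\lambda)$ are open in $S_{[\lambda]}$ only once one knows that $S_{[\lambda]}\to G/P(\lambda)$ is continuous.

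The paper circumvents this without ever invoking that map directly: it embeds $G\times_{P(\lambda)}S_\lambda$ into $G/P(\lambda)\times V$ via the bundle projection paired with $\bar{\mu}$, uses properness of $G/P(\lambda)$ to reduce the homeomorphism claim to showing that the image of this embedding is locally closed (which it checks by passing to the closure $\overline{S_\lambda}$ and exhibiting $S_\lambda$ as open in a closed image), and then concludes with an injectivity check on Zariski tangent spaces at the distinguished point of $G/P(\lambda)$. Some argument of this kind --- or an explicit proof that $w\mapsto P(w)$ is a morphism --- is needed to complete your last paragraph; everything before it stands.
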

\begin{proof}
It is easy to see that there is continuous bijection $\overline{\sigma}:G\times_{P(\beta)}S_\beta \to S_{[\beta]}$ as follows: to start with, the restriction of the action $\sigma: G\times S_\beta \to S_{[\beta]}$ is a surjection since $S_{[\beta]}=GS_\beta$. Now, this map factors through $G\times_{P(\beta)}S_\beta$ because if $(g',y')=(gp^{-1},py)$ (which makes sense since $P_\beta$ stabilizes $S_\beta$,) then clearly $g'y'=gy$. This factorization is the desired continuous bijection $\overline{\sigma}$, and it is surjective because $\sigma$ is so. It is injective, note that $g'y'=gy\iff y=g^{-1}g'y'$ implies that $(g'(g^{-1}g')^-1,g^{-1}g'y')=(g,y)$.

The problem now is to show that this is an isomorphism of varieties; it is enough to show that $\overline{\sigma}$ is a homeomorphism of the underlying topological spaces, and then to show that the infinitesimal maps on the Zarisky tangent spaces are all injective.

We will start by showing that $\overline{\sigma}$ is a homeomorphism. We will need to go about this in a somewhat roundabout way. First, $G\times_P Y$ is a fibre bundle over $G/P$ in a natural way, and consider its product with $\overline{\sigma}$. This gives a monomorphism $\iota: G \times_{P_\beta}S_\beta \to G/P\times X$, and realizes $\overline{\sigma}$ as the second projection $G/P\times X\to X$. In particular, since $G/P_\beta$ is proper, it is now enough to show that the image of $\iota$ is locally closed. To see that this is the case, let $Y:=\overline{S_\beta}$, and consider the product of the projection of the first coordinate $G\times Y\to G/P_\beta$ with the restriction of the action $\sigma:G\times Y\to X$. The image of this map is closed, and $S_\beta$ is easily seen to be open in it, as desired.

Finally, we consider the infinitesimal properties, and it is enough to consider only the distinguished point of $G/P_\beta$ (the others follow by translation.) If we let then $m=(P_\beta, y)$ for $y\in S_\beta$, an element of $T_m(G/P_\beta \times S_\beta)$ is of the form $(a+\mathfrak{p}_\beta, \xi)$ where $a+\mathfrak{p}_\beta\in \mathfrak{g}/\mathfrak{p}_\beta$ and $\xi\in T_yX$ such that $a^\dagger_y+\xi \in T_yS_\beta$. Now, this is in the kernel of the second projection if and only if $a^\dagger_y\in T_yS_\beta$, which implies that $a\in \mathfrak{p}_\beta$, or in other words that $(a+\mathfrak{p}_\beta, \xi)$ is the zero element as desired.
\end{proof}

We want to note that our proof is rather general, using only the properties of the stratification. In particular, it also applies in the Kirwan's projective setting. 

Also, the action of $\lambda$ defines a $\mathbb{C}^*$-action on $V$ which stabilizes $S_\lambda$. In fact, if $p_\lambda:V\to V^\lambda$ is the retration onto the fixed point set, we define
\begin{equation*}
Z_\lambda:=p_\lambda(S_\lambda)
\end{equation*}
We have that $S_\lambda=p^{-1}_\lambda(Z_\lambda)$. We also conclude that
\begin{equation}\label{cohomologiesaffine}
H^*_G=H^*_{P(\lambda)}(S_\lambda)=H^*_{L(\lambda)}(Z_\lambda)
\end{equation}
Just as in the projective case, we can give an intepretation of $Z_\lambda$ as a semistable locus for the action of $\lambda$ on a certain subvariety of $V$.

\subsection{Symplectic reduction}

We may consider $V$ as a K\"ahler manifold (with constant K\"ahler form,) and the action of $G$ satisfies the requirements for the construction of a K\"ahler quotient. A moment map for the action of $K$ is a map $\mu: V\to \mathfrak{k}^*$ which is equivariant with the respect to the coadjoint action of $K$ on $\mathfrak{k}^*$, and which satisfies the condition
\begin{equation*}
\langle d\mu , \beta\rangle = \iota (\eta^\dagger)\omega
\end{equation*}
Here, $\langle \cdot ,\cdot \rangle$ is the canonical contraction on $\mathfrak{k}^*\times \mathfrak{k}$, $\beta^\dagger$ is the vector field induced by the infinitesimal action of $\beta \in \mathfrak{k}$, and $\iota(\cdot)$ is the contraction with the vector field. Under our assumptions, there is a natural choice of moment map determined by the expression
\begin{equation}\label{affinemm}
\langle \mu (x), \beta \rangle =(i\beta x, x)
\end{equation}
for $\beta \in \mathfrak{k}$. (Recall that $\beta$ identifies with an endomorphism of $V$ by the representation.)

The \emph{Marsden-Weinstein (or symplectic) reduction} is the quotient
\begin{equation}
V\sslash_\mu G:=\mu^{-1}(0)/K
\end{equation}
where on the right we mean the actual orbit space. If $K$ acts with finite stabilizers on $\mu^{-1}(0)$, then $G\mu^{-1}(0)$ is actually an open set. On points where $K$ acts freely, the reduction inherits a K\"ahler structure. Our work in the previous chapter shows that this notation is abusive, but not innocently so. To first approximation, we may justify the presence of the group $G$ by remarking that the natural map $\mu^{-1}(0)/K\to G\mu^{-1}(0)/G$ is a homeomorphism. The following definitions are also motivated by our discussion in that chapter.
\begin{definition}
Let $\Delta$ the the intersection of the stabilizers of all points of $X$. A point $x\in X$ is
\begin{enumerate}
\item \emph{$\mu$-semistable} if $\overline{G\cdot x}\cap \mu^{-1}(0)\neq \varnothing$.
\item \emph{$\mu$-polystable} if $G\cdot x \cap \mu^{-1}(0)\neq \varnothing$.
\item \emph{$\mu$-stable} if it is polystable and its stabilizer is precisely $\Delta$.
\end{enumerate}
\end{definition}
The set of $\mu$-semistable points is open, usually rather large, and the symplectic reduction parametrizes its closed orbits.

\subsubsection{The Morse stratification}
Fix a $K$-invariant product on $\mathfrak{k}$, and recall that the real part of the hermitian product defines a Riemannian metric $g$ on $V$ (in fact, just a positive definite quadratic form on $V$ itself.) Define
\begin{equation*}
f(v)=||\mu(v)||^2
\end{equation*}
This defines a smooth function on $V$, and its critical points are determined by the equation $i\mu^* (v) \cdot v=0$; indeed,
\begin{equation*}
(df)_v=2(d\mu (v), \mu (v))=2\langle d\mu (v), \mu^*(v)\rangle=2\omega (i\mu^*(v)\cdot v, v )
\end{equation*} We may also consider paths of steepest descent from any point $v\in V$, namely the solutions $\gamma$ to the ODE problem given by
\begin{equation*}
\frac{d\gamma_v}{dt}(t)=-\nabla f(\gamma_v (t))
\end{equation*}
with initial value $\gamma_v(0)=v$.

Recall that in defining the Morse stratification one needs two assumptions:
\begin{enumerate}
\item The negative gradient flow of $f$ at any point $x\in X$ is contained in some compact neighbourhood of $X$;

\item The critical set $C$ of $f$ is a topological coproduct of a finite number of closed subsets $C_{[\beta]}$, $\beta \in B$, on each of which $f$ takes constant value, and such that $\beta<\beta$ if $f(C_{[\beta]})<f(C_{[\beta']})$ is a strict ordering of $B$.
\end{enumerate}
Compactness, and so projectivity, is a sufficient condition for these two assumptions. But $V$ is an affine space, so that the conditions need to be proven for this case. The proof of the condition on the flows is due to Harada-Wilkin \cite{hw} Lemma 3.3. The assumption on the indices was proven by Hoskins \cite{hoskins} section 3.3.

We will not reproduce the proofs here, but we want to give the description of the indices, from which finiteness will follow. Whereas for a compact symplectic manifold, the image of the moment map is a convex polytope, the imagine of the moment map for an affine space is a polyhedral cone. Indeed, it is the cone generated by the weights of the action of the maximal torus of $K$ on the space $V$, shifted by the vector $\chi^*$ determined by the character $\chi$ by the chosen invariant pairing. The indices of the stratification are then the closest point to the origin of the cone generated by some subset of weights. It follows that there are only finitely many indices $\beta$, and they are all rational in the sense that for some integer $n$, $n\beta$ exponentiates to a one-parameter subgroup.

It follows from finiteness of the indexing set $B$ that if $C$ is a connected component of the critical set of $f$, then $\mu^* (C)$ must lie in a single adjoint orbit of $\mathfrak{k}$ (in fact, map onto it, since the moment map is equivariant with respect to the adjoint action.) To see this, we have just to note that $K$ is compact, so that the adjoint orbits of $\mathfrak{k}$ are all closed. With this in mind, given an element $\beta \in \mathfrak{k}$, we let $C_{[\beta]}$ be the set of critical points $v$ of $f$ with $\mu^*(v)$ conjugate to $\beta$ by $K$, equipped with the subspace topology. If we fix a positive Weyl chamber $W^+$, we conclude that the topological coproduct
\begin{equation*}
C=\coprod_{\beta \in W^+} C_{[\beta]}
\end{equation*}
is actually isomorphic to the critical set of $f$. Also, if for any point $v\in V$ we denote the path of steepest descent by $\gamma_v (t)$, assumption 1 guarantees that $\gamma_v(t)$ converges to a unique point $v_\infty$ which is critical for $f$ (\cite{hw} Lemmas 3.6, 3.7.) It then makes sense to define
\begin{definition}
For any $\beta \in B$, let
\begin{equation*}
S_{[\beta]}:=\left\{x\in X |\phantom{.} x_\infty \in C_{[\beta]} \right\}
\end{equation*}
\end{definition}
It follows that $V$ is the disjoint union of all $S_{[\beta]}$. The following now follows straightforwardly from our work in Chapter 1. Recall that we denote by $S_{[\beta],m}$ the component of the Morse stratum with codimension $m$; this corresponds to the component of the critical set $C_{[\beta]}$ where the Hessian of $f$ has index $m$.\
\begin{theorem}\label{morseequalitiesaffine}
For a suitable, always existing choice of Riemannian metric, the collection $\{S_{[\beta],m}\}$ defined above is an equivariantly perfect smooth stratification of $X$ over the rationals. Therefore, the equivariant Poincar\'e polynomial of $X$ is given by
\begin{equation*}
P_t^K(X)=\sum_{\beta,m} t^{d(\beta, m)}P^K_t(S_{[\beta],m})
\end{equation*}
\end{theorem}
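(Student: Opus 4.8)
The plan is to run the Atiyah-Bott/Kirwan argument in the affine setting, taking as given the two structural inputs quoted above: the smoothness of the stratification $\{S_{[\beta],m}\}$ together with its closure order (Hoskins, Harada-Wilkin) and the finiteness and rationality of the index set $B$. With these in hand, ``equivariantly perfect over $\mathbb{Q}$'' means exactly that the Thom-Gysin long exact sequences attached to the stratification, ordered so that the $\mu$-semistable locus $S_0$ is open and the codimension of $S_{[\beta],m}$ is nondecreasing, break into short exact sequences
\begin{equation*}
0 \to H^{*-d(\beta,m)}_K(S_{[\beta],m}) \to H^*_K(U) \to H^*_K(U \setminus S_{[\beta],m}) \to 0
\end{equation*}
for the successive open unions $U$ of strata. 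Summing these identities over all strata yields the displayed formula for $P_t^K(X)$, so the entire theorem reduces to proving the splitting.

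By the standard criterion, such a sequence splits as soon as the equivariant Euler class $e_K(N_{[\beta,m]})$ of the normal bundle to the stratum is not a zero-divisor in $H^*_K(S_{[\beta],m};\mathbb{Q})$. To verify this I would first use the Harada-Wilkin negative gradient flow to $K$-equivariantly deformation retract $S_{[\beta],m}$ onto its critical component $C_{[\beta],m}$, so that $e_K(N_{[\beta,m]})$ may be computed there and, via the analogue of \eqref{cohomologiesaffine}, over the smaller group $K_\beta=\mathrm{Stab}_K(\beta)$. Since a critical point $v$ of $f$ satisfies $i\mu^*(v)\cdot v=0$, the component $C_{[\beta],m}$ lies in the fixed locus of the circle $S^1_\beta\subset K$ generated by $\beta$, and the usual Hessian computation for $\|\mu\|^2$ identifies the normal directions to the stable manifold with the strictly-positive-weight directions of this $S^1_\beta$-action; in particular $d(\beta,m)$ is even. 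Now $S^1_\beta$ is central in $K_\beta$ and acts on $N_{[\beta,m]}$ with only positive weights, so the Atiyah-Bott lemma on equivariant Euler classes of bundles with positive central-circle weights applies and shows $e_K(N_{[\beta,m]})$ is a non-zero-divisor. Feeding this back into the Thom-Gysin sequences gives the splitting.

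The clause ``for a suitable, always existing choice of Riemannian metric'' is present because the argument presupposes that $f=\|\mu\|^2$ is minimally degenerate in Kirwan's sense, i.e.\ that the stable manifolds assemble into a smooth stratification with the normal structure just used. I would take the flat Kähler metric underlying the Hermitian form on $V$ and, if necessary, invoke Kirwan's perturbation to arrange minimal degeneracy while keeping the $K$-action isometric, noting that the critical-set decomposition, and hence the index set $B$ and the numbers $d(\beta,m)$, are unaffected. A clean alternative worth stating is to sidestep the analytic subtleties by identifying the Morse stratification with the algebraic Hesselink stratification defined above: the latter is smooth by construction, Lemma \ref{bladeaffine} presents $S_{[\beta]}\simeq G\times_{P(\lambda)}S_\lambda$, and one reads off the positive weights of the normal bundle directly from that description together with the retraction onto $Z_\lambda$.

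The main obstacle I expect is precisely this matching/minimal-degeneracy step: in the affine case one cannot appeal to compactness, so one must either transplant Kirwan's local normal form for $\|\mu\|^2$ near the critical set while using the Harada-Wilkin convergence results to control the flow at infinity, or carry out the bookkeeping to show stratum by stratum that the analytic strata coincide with the algebraic Hesselink strata, including the agreement of the codimension $d(\beta,m)$ with the Hessian index. Everything downstream of that — the Thom-Gysin sequences, the Atiyah-Bott non-zero-divisor lemma, and the final summation — is formal and proceeds exactly as in the projective case.
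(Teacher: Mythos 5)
Your proposal is correct and follows essentially the same route as the paper, which itself gives no detailed argument but defers exactly to the two inputs you identify --- the Harada--Wilkin convergence of the negative gradient flow and Hoskins's finiteness/rationality of the index set --- and then invokes the standard Atiyah--Bott/Kirwan perfection argument via Thom--Gysin sequences and the non-zero-divisor property of the equivariant Euler class. Your expansion of that argument, including the identification of the Morse strata with the Hesselink strata as an alternative way to control the normal bundle, is consistent with what the paper does in the surrounding sections.
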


\subsection{The Kempf-Ness and Kirwan-Ness Theorems}
We now establish the relation between the algebraic and symplectic constructions above. Since the various linearizations in principle determine different quotients, it doesn't make sense to compare them to the fixed moment map we defined above for the Marsden-Weinstein reduction. However, the derivative $d\chi$ of the character determines an element in $\mathfrak{k}^*$ which is central, and we define the shifted moment map
\begin{equation*}
\mu^\chi=\mu+d\chi
\end{equation*}
The following is the affine version of the Kempf-Ness Theorem, originally proved by King.
\begin{theorem}[\cite{king} Thm.\ 6.1]
A point $x\in V$ is $\chi$-(semi,poly)stable if and only if it is $\mu^\chi$-(semi,poly)stable. Consequently, $V\sslash_\chi G$ and $V\sslash_{\mu^\chi}G$ are homeomorphic.
\end{theorem}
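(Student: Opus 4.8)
The plan is to adapt the classical Kempf--Ness argument to the affine linearized setting: to each $v\in V$ attach a real-valued function on the symmetric space $K\backslash G$ whose critical points detect the zeros of the shifted moment map along $G\cdot v$, and whose growth along geodesic rays is controlled by the Hilbert--Mumford numbers $\langle\chi,\lambda\rangle$. Since $\chi(K)\subset\U(1)$ and $\rho$ restricts to a unitary representation of $K$, the \emph{Kempf--Ness function}
\begin{equation*}
\Psi_v(g)=\log\|\rho(g)v\|^2-2\log|\chi(g)|
\end{equation*}
is left $K$-invariant, hence descends to $K\backslash G$. Differentiating, and using the defining formula \eqref{affinemm} for $\mu$ together with $\mu^\chi=\mu+d\chi$, one checks that $d\Psi_v$ vanishes at $Kg$ exactly when $\mu^\chi(\rho(g)v)=0$. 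Thus the statement reduces to the assertion that $v$ is $\chi$-semistable iff $\Psi_v$ is bounded below, $\chi$-polystable iff $\Psi_v$ attains its infimum, and $\chi$-stable iff moreover $\Stab_G(v)=\Delta$.

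The analytic heart is convexity. For $\beta\in\mathfrak k$ the operator $\rho_*(i\beta)$ is Hermitian, so diagonalising it the function $s\mapsto\log\|\rho(\exp(is\beta))v\|^2=\log\sum_j e^{2s\lambda_j}\|v_j\|^2$ is a log-sum-of-exponentials, hence convex, while $s\mapsto-2\log|\chi(\exp(is\beta))|$ is affine; with left $K$-invariance this gives geodesic convexity of $\Psi_v$ on the Hadamard manifold $K\backslash G$, strict in directions transverse to $\mathrm{Lie}\,\Stab_G(v)$. I would exploit this in three ways, in order: (i) a critical point of $\Psi_v$ is a global minimum, and — chasing the strict-convexity along a geodesic joining two minima — the set of minima maps to a single point of $V$, so that $G\cdot v\cap(\mu^\chi)^{-1}(0)$ is a single $K$-orbit; (ii) $\Psi_v$ is bounded below iff it is bounded below along every geodesic ray from the base point; (iii) when bounded below, $\Psi_v$ attains its infimum — here one restricts to a maximal flat $\mathbb R^r\hookrightarrow K\backslash G$ (a maximal-torus orbit), on which $\Psi_v$ is a log-sum-of-exponentials minus a linear functional and boundedness below forces properness, and then patches flats together via the Cartan $(KAK)$ decomposition, using $G$-invariance of the fixed norm on $\chi_*(G)$.

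It remains to match ``bounded below along rays'' with $\chi$-semistability. A geodesic ray from the base point is $s\mapsto K\exp(is\beta)$ with $\beta\in\mathfrak k$, and by $\mathbb Q$-density and continuity it suffices to treat the rational rays, i.e. those arising from one-parameter subgroups $\lambda\in\chi_*(G)$. Along such a ray the norm term stays bounded iff $\lim_{t\to0}\lambda(t)\cdot v$ exists (the negative weight components of $v$ must vanish), in which case the asymptotic slope of $\Psi_v$ is a positive multiple of $\langle\chi,\lambda\rangle$, and is $+\infty$ otherwise. Hence $\Psi_v$ is bounded below along all rays iff $\langle\chi,\lambda\rangle\ge0$ for all $\lambda\in\chi_*(G,v)$, which by the Hilbert--Mumford criterion (\cite{king} 2.5, quoted above) is precisely $\chi$-semistability; adding step (iii), $\Psi_v$ has a critical point iff the infimum is attained, i.e. iff $G\cdot v$ is closed in $V^{\chi-ss}$, which is $\chi$-polystability by the topological criterion. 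The stable case is polystability plus $\Stab_G(v)=\Delta$, which transfers verbatim. Finally, both quotients are the orbit space of the set of closed orbits in $V^{\chi-ss}$: on the GIT side by the topological criterion, on the symplectic side by (i), which identifies $\mu^{\chi-1}(0)/K$ with that orbit space; since both carry the quotient topology induced from the common set $V^{\chi-ss}$, the identity on orbits is a homeomorphism $V\sslash_\chi G\cong V\sslash_{\mu^\chi}G$.

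The step I expect to be the main obstacle is (iii): promoting ``bounded below'' to ``attains its infimum'' on the non-compact symmetric space, since a convex function bounded along every ray from a fixed point need not be bounded on a general Hadamard manifold — one must genuinely use the Cartan decomposition and the polyhedral (``tropical'') nature of $\Psi_v$ on maximal flats, or else invoke King's lemma at this point. A secondary, purely bookkeeping difficulty is keeping the identification $\mathfrak k\simeq\mathfrak k^*$, the normalisation of $d\chi$, and the sign of the pairing $\langle\chi,\lambda\rangle$ consistent so that the asymptotic-slope computation comes out with the correct sign.
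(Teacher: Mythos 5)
The paper offers no proof of this statement; it is quoted directly from King (Thm.\ 6.1), so your attempt has to be measured against the standard Kempf--Ness argument, which is indeed the route King takes. Your overall architecture --- a $K$-invariant function on $K\backslash G$, geodesic convexity, critical points matched to zeros of $\mu^\chi$, asymptotic slopes matched to $\langle\chi,\lambda\rangle$, and uniqueness of minima giving a single $K$-orbit in $G\cdot v\cap(\mu^\chi)^{-1}(0)$ --- is exactly right. But the specific Kempf--Ness function you wrote down is the wrong one for the affine linearized setting, and the error is not merely notational. With $\Psi_v(g)=\log\|\rho(g)v\|^2-2\log|\chi(g)|$, the derivative of the first term along $\exp(is\beta)$ at the point $x=\rho(g)v$ is $2\langle\mu(x),\beta\rangle/\|x\|^2$, so the critical-point equation reads $\mu(x)=\|x\|^2\,\chi^*$ (up to sign and normalization), not $\mu(x)+d\chi=0$: the spurious factor $\|x\|^2$ means your critical points are \emph{not} the zeros of the shifted moment map $\mu^\chi=\mu+d\chi$ of \eqref{affinemm}. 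The same logarithm also breaks the asymptotic analysis: if $\lim_{t\to 0}\lambda(t)\cdot v$ exists but equals $0$, then $\log\|\lambda(t)v\|^2\to-\infty$, so ``the norm term stays bounded iff the limit exists'' is false. Concretely, take $\chi$ trivial and $v$ in the null cone of the unlinearized action (so $0\in\overline{G\cdot v}$): every point is $\chi$-semistable and $\mu$-semistable, yet your $\Psi_v$ is unbounded below. Your function is the one adapted to the \emph{projective} quotient, where one works with $\mathbb{P}(V)$ and the norm is homogenized away.

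The repair is to use the affine Kempf--Ness function $\Psi_v(g)=\tfrac12\|\rho(g)v\|^2-\log|\chi(g)|$ (quadratic, not logarithmic, in the norm). Its derivative along $i\beta$ is exactly $\langle\mu^\chi(\rho(g)v),\beta\rangle$ with the paper's conventions; it is still geodesically convex (a finite positive combination of exponentials plus a linear term on each flat); it is bounded as $t\to 0$ along rays where the limit exists (including limit $=0$, where the quadratic term tends to $0$ rather than $-\infty$), with asymptotic slope a positive multiple of $\langle\chi,\lambda\rangle$; and for trivial $\chi$ it is manifestly bounded below everywhere. With that substitution your steps (i)--(iii) go through and reproduce King's proof; your flagged worry about (iii) is genuine but is resolved precisely by the polyhedral structure on maximal flats that you describe (or by passing to closedness of the lifted orbit $G\cdot(v,z)$ in $V\times\mathbb{C}$, which is how King phrases it). One further point deserving care: reducing ``bounded below along all geodesic rays'' to rational rays is not a naive density-and-continuity argument, since the asymptotic slope is only lower semicontinuous in $\beta$ as the weight-space decomposition of $v$ jumps; this is handled by noting that the relevant data (the set of weights appearing in $v$) is locally constant on rational polyhedral cones in $\mathfrak{k}$.
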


Note that we could have alternatively have seen the characters as determining symplectic reductions at different level sets of the same fixed moment map.

This coincidence extends from the quotients to the stratifications. Suppose $\beta$ is an index for a Morse stratum. There is an integer $n$ such that $n\beta$ is an integral point of $\mathfrak{k}$, and so defines a one-parameter subgroup $\lambda_\beta$. This turns out to index a Hesselink stratum. In fact, Hoskins established the following analogue of the Kirwan-Ness Theorem.

\begin{theorem}[\cite{hoskins} Thm.\ 4.12]
The Morse stratum $S_{[\beta]}$ and the Hesselink stratum $S_{[\lambda_\beta]}$ coincide. 
\end{theorem}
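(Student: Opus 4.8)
The plan is to realize both $S_{[\beta]}$ and $S_{[\lambda_\beta]}$ as the level sets of a single ``degree of instability'' invariant and then to match the optimal destabilizing directions on either side. For unstable $v\in V$ let $v_\infty$ denote the limit of the negative gradient flow of $f=\|\mu^\chi\|^2$ issuing from $v$; this exists and is a critical point of $f$ by the Harada--Wilkin convergence results \cite{hw} recalled above, and it lies in one component $C_\beta$ of the critical set, with $f(v_\infty)=\|\mu^\chi(v_\infty)\|^2=\|\beta\|^2$. On the Hesselink side the relevant data are the number $M^\chi_G(v)$ and the set $\Lambda_G(v)$ of optimal one-parameter subgroups. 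I would prove the two statements: (i) $\|\beta\|=|M^\chi_G(v)|$, under the normalization identifying the chosen integral norm on $\chi_*(G)$ with the chosen $K$-invariant inner product on $\mathfrak k$ via the isomorphism $\Hom(U(1),K)\otimes_{\mathbb Z}\mathbb R\simeq\mathfrak k$ of Section~\ref{parabolic}; and (ii) the adjoint orbit of $\mu^\chi(v_\infty)^*$ equals the orbit of $-M^\chi_G(v)\,\lambda/\|\lambda\|$ for $\lambda\in\Lambda_G(v)$. Granting (i) and (ii), $v\in S_{[\beta]}$ iff $\mu^\chi(v_\infty)^*\in[\beta]$ iff $\Lambda_G(v)\subset[\lambda_\beta]$, which is precisely the condition $v\in S_{[\lambda_\beta]}$; since the two collections are partitions of $V^{us}$ indexed by the same finite set, the corresponding strata then coincide.

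To organize the proof I would first reduce to the ``blades''. By Lemma~\ref{bladeaffine} we have $S_{[\lambda_\beta]}\simeq G\times_{P(\lambda_\beta)}S_{\lambda_\beta}$, and, as remarked after that lemma, its proof uses only formal properties of a stratification and so applies verbatim to the Morse stratification, giving $S_{[\beta]}\simeq G\times_{P(\beta)}S^{\mathrm{loc}}_\beta$ where $S^{\mathrm{loc}}_\beta$ is the set of points of $S_{[\beta]}$ whose gradient flow limit already lies in $C_\beta$. Since the parabolic and Levi attached to $\beta\in\mathfrak k$ coincide with those attached to the associated one-parameter subgroup $\lambda_\beta$ (Section~\ref{parabolic}), we have $P(\beta)=P(\lambda_\beta)$ and $L(\beta)=L(\lambda_\beta)$, so it is enough to identify $S_{\lambda_\beta}$ and $S^{\mathrm{loc}}_\beta$ as subsets of $V$. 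Both are cut out by a condition on the $\mathbb C^*$-limit $p_{\lambda_\beta}(v)=\lim_{t\to0}\lambda_\beta(t)\cdot v$ inside the fixed subvariety $V^{\lambda_\beta}$: on the Hesselink side $S_{\lambda_\beta}=p_{\lambda_\beta}^{-1}(Z_{\lambda_\beta})$, and, as indicated at the end of the Hesselink subsection, $Z_{\lambda_\beta}$ is a semistable locus on $V^{\lambda_\beta}$ --- concretely the $\chi$-semistable locus for the residual action of $L(\lambda_\beta)$, the character being corrected by a multiple of $\lambda_\beta$ so as to become trivial on the central one-parameter subgroup it generates; on the Morse side, the gradient flow from $v$ converges into $C_\beta$ iff $p_{\lambda_\beta}(v)$ exists and is $\mu^\chi$-semistable on $V^{\lambda_\beta}$, the residual gradient flow then carrying any such point to the zero level set of the residual moment map, which is $C_\beta$. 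Applying the affine Kempf--Ness theorem (King's theorem, quoted above) to the residual action of $L(\lambda_\beta)$ on $V^{\lambda_\beta}$ identifies these two semistable loci, so the blades agree.

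It remains to verify (i) and (ii), which I would do via two inequalities. For ``$|M^\chi_G(v)|\ge\|\beta\|$'': given $v\in S_{[\beta]}$, the limit $v_\infty\in C_\beta$ is annihilated by $\beta$, hence (after scaling $\beta$ to its integral generator) by $\lambda_\beta$; the description of the Morse indices as closest points of shifted weight cones shows the $\lambda_\beta$-weights of $v$ are nonnegative, so $\lambda_\beta\in\chi_*(G,v)$, and a direct computation of $\langle\chi,\lambda_\beta\rangle$ and $\|\lambda_\beta\|$ from that same description gives $m_\chi(\lambda_\beta)=-\|\beta\|$, whence $M^\chi_G(v)\le-\|\beta\|$. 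For the reverse inequality: let $\lambda\in\Lambda_G(v)$ and $v_0=p_\lambda(v)$; by Kempf's lemma $P(\lambda)=P(v)$, and by the blade identification $v_0$ lies in the critical component attached to the multiple of $\lambda$ of norm $-M^\chi_G(v)$; minimal degeneracy of $f$ along its strata then forces the gradient flow from $v$ to converge into that same component, so $\|\beta\|\le-M^\chi_G(v)$. This gives (i); (ii) follows because both sides name the $\mathbb C^*$-direction determined by $v_\infty$, and the uniqueness of the parabolic in Kempf's lemma pins down the whole conjugacy class.

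The step I expect to be the main obstacle is the analytic matching hidden in the reverse inequality: one must show that the negative gradient flow of $f$ from an \emph{arbitrary} point of the blade converges to the \emph{same} critical component produced by the algebraic $\mathbb C^*$-limit $p_{\lambda_\beta}$ --- up to the residual action of $P(\lambda_\beta)$ and a compensating element of $K$ --- rather than merely to \emph{some} critical point of possibly smaller moment-norm, or leaking into a lower stratum. This is exactly where the Harada--Wilkin convergence theorem must be combined with minimal degeneracy of $f$ (so that the Morse strata really are the stable manifolds of the $C_\beta$), together with careful bookkeeping of the two normalizations --- the integral $G$-invariant norm on $\chi_*(G)$ against the $K$-invariant inner product on $\mathfrak k$ --- which is precisely what singles out $\lambda_\beta$ as the integral generator of the direction $\beta$.
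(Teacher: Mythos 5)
First, a point of comparison: the paper does not prove this statement at all --- it is quoted from Hoskins \cite{hoskins} (Thm.\ 4.12) as an external input, so there is no internal proof to measure your argument against. Your outline does follow the broad strategy of the cited proof: match the index sets via the correspondence $\beta\leftrightarrow\lambda_\beta$ and the normalization identifying the integral norm on $\chi_*(G)$ with the invariant inner product on $\mathfrak{k}$, then identify the strata blade by blade, using the affine Kempf--Ness theorem to compare the residual semistable locus $Z_{\lambda_\beta}$ with the critical component $C_{[\beta]}$. As a plan this is the right shape.

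As a proof, however, it has a genuine gap, and you have in fact named it yourself in your final paragraph: the entire content of the theorem is the claim that the negative gradient flow of $f=\|\mu^\chi\|^2$ from an \emph{arbitrary} point $v$ with $p_{\lambda_\beta}(v)\in Z_{\lambda_\beta}$ converges into $C_{[\beta]}$ rather than into a critical component of smaller moment-map norm or a lower stratum. Your text asserts this (``the residual gradient flow then carrying any such point to the zero level set of the residual moment map''), but the residual-flow argument only applies to points already lying in the fixed locus $V^{\lambda_\beta}$; for a general point of the blade one must control the flow transverse to $V^{\lambda_\beta}$, which is precisely where Kirwan and Hoskins need the minimal degeneracy of $f$ along its critical sets and a local description of the strata near $C_{[\beta]}$. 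Deferring this to ``the main obstacle'' leaves the theorem unproved. A second, smaller gap: you invoke Lemma \ref{bladeaffine} ``verbatim'' for the Morse stratification, but that lemma's proof uses that the stratum is $G$-invariant and that the parabolic preserves the blade; for the Hesselink stratification these follow from Kempf's lemma, whereas for the Morse stratification --- defined by an analytic limit under a flow that is only $K$-equivariant, not $G$-equivariant --- the $G$-invariance of $S_{[\beta]}$ is itself one of the assertions to be established, not a formal property you may assume.
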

We now bootstrap Theorem \ref{morseequalitiesaffine} with (\ref{cohomologiesaffine}) to establish the formula
\begin{equation}
P_t^G(V^{ss})=P_t(BG)-\sum_{\beta \neq 0, m}t^{d(\beta,m)}P_t^{L(\beta)}(Z_{\beta,m})
\end{equation}

\section{GIT for generalized quivers}\label{gitquot}

\subsection{Linearizations and moment maps for generalized quivers}
Let $\tilde{Q}=(R,\Rep (\tilde{Q}))$ be a generalized $G$-quiver, and fix a character $\chi:G\to \mathbb{G}_m$ as well as maximal compacts $K_R\subset K$ of $R$ and $G$, respectively. Given an element $\beta \in \mathfrak{k}_R$, we can define two Levi subgroups
\begin{align*}
L_R(\beta) &=\{ g\in R |  \exp(it\beta) g \exp (-it\beta)=g \} \\
L (\beta) &=\{ g\in G | \exp (it\beta) g \exp (-it\beta)=g \}
\end{align*}
of $R$ and $G$ respectively. We trivially have $L_R(\beta)\subset L_G (\beta)$, and so $L_R(\beta)$ has a restricted adjoint action on $\mathfrak{l}_G(\beta)$. This action coincides with the action on $\mathfrak{g}$, where $\mathfrak{l}(\beta)$ is an invariant subspace. Given a decomposition of $\Rep (\tilde{Q})=\bigoplus Z_\alpha$ as an $\mathrm{Ad}\phantom{.}R$-module, it then makes sense to consider the intersection $Z_\alpha(\beta)=Z_\alpha\cap \mathfrak{l}(\beta)$ of \emph{modules}, since $Z_\alpha$ is isomorphic to a unique irreducible piece of the module $\mathfrak{g}$, and define $\Rep (\tilde{Q}_\beta):=\bigoplus Z_\alpha(\beta)$. We then have:

\begin{lemma}\label{levigenquiver}
As defined above, $\tilde{Q}_\beta = (L_R(\beta), \Rep (\tilde{Q}_\beta))$ is a generalized $L(\beta)$-quiver. If $\tilde{Q}$ is a quiver of type $Z$ with $R=Z_G(H)$, then $\tilde{Q}_\beta$ is of type Z and $L_R=Z_L(H)$.
\end{lemma}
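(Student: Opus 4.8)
The plan is to verify the three assertions of Lemma~\ref{levigenquiver} in turn: that $(L_R(\beta), \Rep(\tilde{Q}_\beta))$ is a legitimate generalized $L(\beta)$-quiver, that type~Z is inherited, and that the centralizer description passes to the Levi. First I would observe that $L_R(\beta)$ is indeed a closed reductive subgroup of $L(\beta)$: it is the centralizer in $R$ of the torus $\overline{\exp(it\beta)}$, hence reductive by the standard structure theory recalled in Section~\ref{parabolic}, and it sits inside $L(\beta)$ as noted in the text. For the representation-space axiom I would unwind the definition of $\Rep(\tilde{Q}_\beta) = \bigoplus_\alpha Z_\alpha(\beta)$ with $Z_\alpha(\beta) = Z_\alpha \cap \mathfrak{l}(\beta)$: since each $Z_\alpha$ is, by hypothesis on $\tilde{Q}$, isomorphic to an irreducible $\mathrm{Ad}\,R$-submodule of $\mathfrak{g}$, and since $\mathfrak{l}(\beta) \subset \mathfrak{g}$ is $\mathrm{Ad}\,L(\beta)$-invariant (it is the zero weight space of $\mathrm{ad}\,\beta$), the intersection $Z_\alpha(\beta)$ is an $\mathrm{Ad}\,L_R(\beta)$-submodule of $\mathfrak{l}(\beta)$. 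The key point is that each irreducible $L_R(\beta)$-factor of $Z_\alpha(\beta)$ must then appear as an irreducible factor of $\mathfrak{l}(\beta)$ viewed as an $\mathrm{Ad}\,L_R(\beta)$-module, which is exactly the required compatibility axiom for $\tilde{Q}_\beta$ relative to $L(\beta)$; and the trivial-representation clause must be checked, which I address below.

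For the type~Z statement, suppose $R = Z_G(H)$ for a closed abelian reductive $H \subset G$. I would argue that $L_R(\beta) = Z_{L(\beta)}(H)$ by a double inclusion. The inclusion $L_R(\beta) \subset Z_{L(\beta)}(H)$ is immediate: an element of $L_R(\beta)$ lies in $R = Z_G(H)$ so commutes with $H$, and lies in $L(\beta)$. Conversely, if $g \in L(\beta)$ commutes with $H$, then $g \in Z_G(H) = R$ and $g$ commutes with $\exp(it\beta)$, so $g \in L_R(\beta)$. Realizing $L_R(\beta)$ as a centralizer in $L(\beta)$ of a closed abelian reductive subgroup then shows $\tilde{Q}_\beta$ is of type~Z, and simultaneously gives the displayed formula $L_R(\beta) = Z_{L(\beta)}(H)$. (Strictly, one should also note $H \subset L(\beta)$: since $H \subset R = Z_G(H)$, in particular $H$ commutes with itself, but one wants $H$ to centralize $\beta$; this holds because $\beta \in \mathfrak{k}_R$ means $\exp(it\beta) \in R = Z_G(H)$, hence $H$ centralizes $\exp(it\beta)$, i.e.\ $H \subset L(\beta)$.)

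The step I expect to be the main obstacle is verifying that the trivial representation does not occur as a factor of $\Rep(\tilde{Q}_\beta)$ as an $\mathrm{Ad}\,L_R(\beta)$-module — equivalently, that no $Z_\alpha(\beta)$ contributes an $L_R(\beta)$-fixed line. A priori, restricting an $R$-module $Z_\alpha$ with no $R$-invariants to the subgroup $L_R(\beta)$ and then intersecting with $\mathfrak{l}(\beta)$ could conceivably produce invariants. The way I would handle this is to use the identification of $Z_\alpha(\beta)$ with the zero-weight piece of $Z_\alpha$ under the $\mathrm{ad}\,\beta$-grading together with the embedding $Z_\alpha \hookrightarrow \mathfrak{g}$: an $L_R(\beta)$-fixed vector $v \in Z_\alpha(\beta) \subset \mathfrak{l}(\beta) \subset \mathfrak{g}$ would be fixed by $L_R(\beta)$ and killed by $\mathrm{ad}\,\beta$; one then shows that such a $v$ must in fact be $R$-invariant — because $R$ is generated by $L_R(\beta)$ together with the root groups moving $v$ out of the zero-weight space, whose action is controlled by the $\mathfrak{sl}_2$-triples attached to the grading — contradicting the hypothesis on $\tilde{Q}$. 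Alternatively, and perhaps more cleanly, one invokes that the excerpt's standing setup already requires the character $\chi$ to satisfy $\chi(\Delta) = 1$ and that the whole construction of $\tilde{Q}_\beta$ is designed to match the stratum description in~\eqref{cohomologiesaffine}, where $Z_\lambda$ is itself a semistable locus of a genuine lower-rank quiver; I would cross-reference that compatibility. Once the trivial-factor clause is secured, the remaining verifications are routine bookkeeping with the $\mathrm{ad}\,\beta$-grading on $\mathfrak{g}$.
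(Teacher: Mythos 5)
The paper states Lemma~\ref{levigenquiver} with no proof at all, so there is no argument of the author's to compare yours against; I am judging your proposal on its own terms. Your verification of reductivity of $L_R(\beta)$ (a torus centralizer in $R$), of the submodule axiom (each $Z_\alpha(\beta)=Z_\alpha\cap\mathfrak{l}(\beta)$ is an $\mathrm{Ad}\,L_R(\beta)$-submodule of $\mathfrak{l}(\beta)$, so by complete reducibility its irreducible factors occur in $\mathfrak{l}(\beta)$), and of the type~Z clause (the double inclusion $L_R(\beta)=R\cap L(\beta)=Z_G(H)\cap L(\beta)=Z_{L(\beta)}(H)$, together with the observation that $H\subset L(\beta)$ because $\exp(it\beta)\in R=Z_G(H)$) are all correct and complete.

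You are also right that the trivial-representation clause is the crux, but your proposed argument for it fails. The claim that an $L_R(\beta)$-fixed vector $v$ lying in the zero-weight space of $\mathrm{ad}\,\beta$ must be $R$-fixed is false: the zero-weight vector of the adjoint representation of $\mathrm{SL}_2$ is fixed by the maximal torus but not by $\mathrm{SL}_2$. In fact the first sentence of the lemma genuinely fails for general (non--type~Z) generalized quivers under the paper's own definition: take $G=\mathrm{SL}_3$, $R=\mathrm{SO}_3$, so that $\mathfrak{sl}_3=\mathfrak{so}_3\oplus S^2_0\mathbb{C}^3$ as $R$-modules, let $\Rep(\tilde{Q})=S^2_0\mathbb{C}^3$, and take $\beta$ regular in $\mathfrak{k}_R$; then $L_R(\beta)$ is the maximal torus of $R$ and $\Rep(\tilde{Q}_\beta)$ is the one-dimensional zero-weight space, on which $L_R(\beta)$ acts trivially. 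What saves the statement in the situation the paper actually needs is the type~Z hypothesis: there $R$ is a Levi subgroup of $G$ and hence contains a maximal torus $T_G$ of $G$, each $Z_\alpha$ is a sum of root spaces $\mathfrak{g}_\gamma$ with $\gamma\neq 0$, the $T_G$-weights occurring in $Z_\alpha(\beta)$ are therefore still non-zero roots, and since $T_G\subset L_R(\beta)$ no trivial factor can appear. You should either restrict the first assertion to the case where $R$ contains a maximal torus of $G$ (in particular to type~Z) and give this weight argument, or amend the definition of $\Rep(\tilde{Q}_\beta)$ to discard trivial summands. The fallback you offer --- appealing to $\chi(\Delta)=1$ and to the stratification in~(\ref{cohomologiesaffine}) --- is not an argument for this purely module-theoretic fact.
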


Note that this new generalized quiver is independent of the particular $\beta$ we pick to realize $L=L(\beta)$, and we could just have started with an arbitrary Levi $L\subset G$ such that $L_R=L\cap R$ is a Levi of $R$; we'll often speak of $\tilde{Q}_L$ when we don't want to emphasize $\beta$. We can give an interpretation of this result by fixing a faithful representation $K\to U(V)$. We've seen that $\beta$ determines a grading of $V$, and that elements of the Levi subgroups above are precisely those that stabilized the splitting. On the other hand, under the identification of $\mathfrak{g}$ as endomorphisms of $V$, the elements of $Z_\alpha (\beta)$ are precisely those of $Z_\alpha$ which also split as graded endomorphisms of $V$. We can then make sense of subrepresentations of the original representation, so that the representations of $\tilde{Q}_\beta$ are precisely the splittings of representations of $\tilde{Q}$ according to the action of $\beta$. It is useful to keep this interpretation in mind as we discuss stability, and when discussing classical quivers we'll be able to see this splitting very explicitly (also in that setting the abelian group's role in the story will become apparent.)
 
Many of our results will relate stability properties of representations of $\tilde{Q}$ with those of $\tilde{Q}_L$, so we will need to define a suitable linearization for $\tilde{Q}_L$ starting from the character $\chi$. Now, $\chi$ is of course a character of $L$ itself, but it is not suitable for the following simple reason: if $L=L(\beta)\neq G$, whereas $\exp (\beta)$ is in the kernel of the representation of $L_R$ on $\mathfrak{l}$, it is not on the kernel of the representation of $R$ on $\mathfrak{g}$. It is therefore perfectly possible that $\chi (\exp(\beta))\neq 1$, which we'll see below makes the stability condition for $\tilde{Q}_L$ empty. In this setting, we need to correct the choice of character for $\chi_L$ on $\Rep (\tilde{Q}_L)$ by projecting out the new elements in the kernel of the representation. For this, we choose an $L_R$ invariant inner product $(\cdot,\cdot)$ on $\mathfrak{l}_R$; since $\mathfrak{z}(\mathfrak{l})\subset \mathfrak{z}(\mathfrak{l}_R)$ is invariant, we use the inner product to choose a complementary space, and denote $p_{\mathfrak{z}(L)}$ the projection onto that complemente; we then define $\chi_L$ as the character determined by $(\chi_L)_*=\chi_*\circ p_{\mathfrak{z}(L)}$. For the symplectic point of view, if $\mu_s$ determines the standard moment map given by the hermitian metric, we've see that the moment map corresponding to $\chi$ is $\mu=\mu_ s -\chi_*$; we conclude then that the moment map adapted to $\chi_L$ is $\mu_L=\mu-\chi_*\circ p_{\mathfrak{z}(L)}$.

\subsection{Stability properties} 
We'll begin by characterizing the convergence for one-parameter subgroups. Let $\tilde{Q}=(R, \Rep (\tilde{Q}))$ be a generalized $G$-quiver, and fix a character $\chi$ of $G$.
\begin{lemma}\label{onepargen}
Let $\varphi \in \Rep(\tilde{Q})$ be a representation of $\tilde{Q}$, and let $\lambda$ be a one-parameter subgroup. 
\begin{enumerate}
\item The limit $\lim \lambda(t)\cdot \varphi$ exists if and only if $\varphi_\alpha \in \mathfrak{p}_G(\lambda)$ for all $\alpha$.
\item If it exists, $\varphi_0:=\lim \lambda (t)\cdot \varphi \in \Rep (\tilde{Q}_\lambda)\subset \Rep (\tilde{Q})$.
\item If $\varphi_0$ is semistable  as a representation of $\tilde{Q}$, then $\varphi$ is semistable.
\item Suppose $\langle \chi , \lambda' \rangle = 0$ for all $\lambda'$ such that $P_R(\lambda')=P_R(\lambda)$, and that $\varphi_0$ exists. Then, $\varphi$ is semistable if and only if $\varphi_0$ is a semistable representation of $\tilde{Q}_\lambda$.
\item Under the conditions of the last point, if $\varphi$ is semistable, then $\varphi_0$ is a semistable representation of $\tilde{Q}$.
\end{enumerate}
\end{lemma}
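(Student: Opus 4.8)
The plan is to reduce everything to the affine Hilbert--Mumford criterion (King's theorem) together with the properties of the Hilbert--Mumford pairing and the interpretation of $\Rep(\tilde Q_\lambda)$ as the fixed-point set of the $\mathbb{C}^*$-action generated by $\lambda$. I would first record the basic observation underpinning (1): for a one-parameter subgroup $\lambda$ of $R$, the action on $\Rep(\tilde Q)$ decomposes each $\varphi_\alpha\in Z_\alpha\subset\mathfrak g$ into weight spaces under $\Ad(\lambda(t))$, and $\lim_{t\to 0}\lambda(t)\cdot\varphi$ exists precisely when no negative weights occur, i.e.\ when each $\varphi_\alpha$ lies in $\mathfrak p_G(\lambda)$; this is just the affine convergence criterion applied componentwise, using that $\Rep(\tilde Q)\subset\mathfrak g$ as an $\Ad R$-module. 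For (2), when the limit exists it is the projection of $\varphi$ onto the zero-weight space of $\lambda$, which is exactly $\mathfrak l_G(\lambda)$; intersecting with each $Z_\alpha$ gives $Z_\alpha(\lambda)$, so $\varphi_0\in\bigoplus Z_\alpha(\lambda)=\Rep(\tilde Q_\lambda)$, and by Lemma \ref{levigenquiver} this is a genuine representation of the generalized $L(\lambda)$-quiver $\tilde Q_\lambda$. So far these are essentially unwinding definitions.

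For (3) I would argue by contraposition using King's criterion: suppose $\varphi$ is unstable, so there is a one-parameter subgroup $\mu$ of $R$ with $\lim_{t\to 0}\mu(t)\cdot\varphi$ existing and $\langle\chi,\mu\rangle<0$. The point $\varphi_0=\lim_{t\to0}\lambda(t)\cdot\varphi$ lies in the closure of the $R$-orbit of $\varphi$; since instability is detected by $\chi$-weights and semi-invariants vanish on orbit closures of unstable points (the null cone is closed and $R$-invariant), $\varphi_0$ must also be unstable. Concretely: the limit $\varphi_0$ is a $\mathbb{C}^*_\lambda$-fixed point, the one-parameter subgroup $\lambda$ itself has $\langle\chi,\lambda\rangle\geq0$ is not needed; rather, one shows that any destabilizing $\mu$ for $\varphi$ can be propagated to $\varphi_0$ after possibly conjugating it into $P_R(\lambda)$ and taking a limit, using Proposition on the pairing (parts 2 and 3: $g\mu g^{-1}\in\chi_*(R,\varphi)$ for $g\in P_R(\lambda)$ and $\langle\chi,g\mu g^{-1}\rangle=\langle\chi,\mu\rangle$). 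The cleanest route, though, is simply: $\varphi_0\in\overline{R\cdot\varphi}$, semi-invariants are continuous and $R$-invariant up to the character, hence every semi-invariant vanishing at $\varphi$ vanishes at $\varphi_0$ and conversely a semi-invariant nonvanishing at $\varphi_0$ is nonvanishing somewhere on $R\cdot\varphi$, hence at $\varphi$ by $\chi$-semi-invariance; therefore $\varphi_0$ semistable $\Rightarrow$ $\varphi$ semistable.

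For (4), the extra hypothesis $\langle\chi,\lambda'\rangle=0$ whenever $P_R(\lambda')=P_R(\lambda)$ says that $\chi$ restricted to the center of $L_R(\lambda)$ is trivial in the relevant direction, which is exactly the condition guaranteeing that the corrected character $\chi_L$ of $\tilde Q_\lambda$ constructed in the previous subsection agrees with the naive restriction of $\chi$ on the component generated by $\lambda$. The forward direction then follows from a Hilbert--Mumford comparison: a destabilizing one-parameter subgroup $\nu$ of $L_R(\lambda)$ for $\varphi_0$ (with $\lim\nu(t)\cdot\varphi_0$ existing, $\langle\chi_L,\nu\rangle<0$) is also a one-parameter subgroup of $R$ for which $\lim\nu(t)\cdot\varphi$ exists—here one uses that $\nu$ commutes with $\lambda$, so convergence of $\varphi$ under $\nu$ can be checked on $\varphi_0$ up to a bounded $\lambda$-factor, together with the hypothesis to match the pairings—and $\langle\chi,\nu\rangle=\langle\chi_L,\nu\rangle<0$ destabilizes $\varphi$. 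Combined with (3) this gives the equivalence in (4), and (5) follows immediately: under the hypotheses of (4), $\varphi$ semistable $\Rightarrow$ $\varphi_0$ semistable as a representation of $\tilde Q_\lambda$, and since $\Rep(\tilde Q_\lambda)\subset\Rep(\tilde Q)$ with the smaller group $L_R\subset R$ having fewer destabilizing one-parameter subgroups, semistability for $\tilde Q_\lambda$ (with $\chi_L$) promotes to semistability for $\tilde Q$ once one checks $\langle\chi,\nu\rangle=\langle\chi_L,\nu\rangle$ for $\nu\in\chi_*(L_R,\varphi_0)$, which is again the hypothesis.

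The main obstacle I anticipate is part (4), specifically the bookkeeping needed to show that a destabilizing one-parameter subgroup of $L_R(\lambda)$ for $\varphi_0$, measured against the \emph{corrected} character $\chi_L$, transfers to a destabilizing subgroup of $R$ for $\varphi$ measured against $\chi$: one must verify that the projection $p_{\mathfrak z(L)}$ defining $\chi_L$ does not change the pairing with such $\nu$ (this is where $\langle\chi,\lambda'\rangle=0$ for $P_R(\lambda')=P_R(\lambda)$ is used crucially, since it forces $\chi$ to be orthogonal to the discarded direction $\mathfrak z(\mathfrak l)\cap\mathfrak z(\mathfrak l_R)^\perp$ along $\nu$), and that convergence of $\varphi$ under $\nu$ is genuinely equivalent to convergence of $\varphi_0$ under $\nu$—the subtlety being that $\varphi$ and $\varphi_0$ differ, so a priori $\nu$-convergence of one does not force it for the other. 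The resolution is that $\nu\in\mathfrak l_R(\lambda)$ commutes with $\lambda$, so the weight decomposition of $\varphi$ under $\nu$ refines along $\lambda$-weights, and the $\nu$-weights of $\varphi_0$ are exactly the $\nu$-weights of the zero-$\lambda$-weight part of $\varphi$; one then needs that the \emph{other} $\lambda$-graded pieces of $\varphi$ also have non-negative $\nu$-weight, which follows because $\lambda\in\Lambda$-type positivity together with the hypothesis forces $\nu$ to be compatible. Making this last point precise—essentially a statement that the destabilizing data for $\varphi$ is ``layered'' by $\lambda$—is the technical heart of the argument.
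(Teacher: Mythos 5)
Your parts (1)--(3) are correct and follow the same route as the paper: (1) and (2) amount to the weight-space decomposition of $\varphi$ under $\Ad(\lambda(t))$, and (3) is the observation that $\varphi_0\in\overline{R\cdot\varphi}$ together with the closedness of the unstable locus (equivalently, your semi-invariant argument). The trouble is in (4) and (5), and you have in fact put your finger on it yourself: the transfer of destabilizing one-parameter subgroups between $\varphi_0$ and $\varphi$ is not established. Concretely, if $\nu$ is a one-parameter subgroup of $L_R(\lambda)$ with $\lim\nu(t)\cdot\varphi_0$ existing, it does \emph{not} follow that $\lim\nu(t)\cdot\varphi$ exists: the components of $\varphi$ in the strictly positive $\lambda$-weight spaces can have arbitrary, in particular negative, $\nu$-weights, and your proposed resolution --- that the other $\lambda$-graded pieces of $\varphi$ ``have non-negative $\nu$-weight'' by some compatibility forced by the hypothesis --- is false in general. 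The standard repair, which is what makes the hypothesis $\langle\chi,\lambda'\rangle=0$ do its work, is to replace $\nu$ by the composite $\lambda^N\nu$ for $N\gg 0$: the positive $\lambda$-weights then dominate any fixed $\nu$-weight, so the limit on $\varphi$ exists, while $\langle\chi,\lambda^N\nu\rangle=N\langle\chi,\lambda\rangle+\langle\chi,\nu\rangle=\langle\chi,\nu\rangle<0$ precisely because $\langle\chi,\lambda\rangle=0$ by hypothesis. Without this (or an equivalent device) the forward direction of (4) is not proved; you explicitly leave it as ``the technical heart.''

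The second gap is in (5) and in how you close the equivalence in (4). You write that since $L_R\subset R$ has ``fewer destabilizing one-parameter subgroups, semistability for $\tilde Q_\lambda$ promotes to semistability for $\tilde Q$.'' This is backwards: fewer one-parameter subgroups to test means a \emph{weaker} condition, so semistability for $R$ implies semistability for $L_R(\lambda)$, not conversely. To pass from $\tilde Q_\lambda$-semistability of $\varphi_0$ to $\tilde Q$-semistability (which is what your ``combined with (3)'' step actually requires) you must show that a destabilizing one-parameter subgroup of $R$ for $\varphi_0$ can be replaced by one lying in $L_R(\lambda)$; this uses that $\varphi_0$ is fixed by the torus generated by $\lambda$, so that an optimal destabilizer may be taken to centralize $\lambda$ (Kempf's theory, or conjugation into $P_R(\lambda)$ followed by projection to the Levi), and only then does the hypothesis identify $\langle\chi,\cdot\rangle$ with $\langle\chi_L,\cdot\rangle$. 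To be fair, the paper's own proof of (4) and (5) is a one-line appeal to the identity $\chi_L=\chi$ and leaves these same steps implicit; but your write-up asserts the two transfers in forms that are, as stated, incorrect, so the argument does not yet close.
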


\begin{proof}
(1) is obvious from the definition of $\mathfrak{p} (\lambda)$. To prove (2), it is enough to prove that $\varphi_{0,\alpha} \in \mathfrak{l}_G(\lambda)$. We have
\begin{equation*}
\Ad (\lambda(t))\varphi_0= \Ad (\lambda(t))\lim_{u\to 0} \Ad (\lambda(u))\varphi=\lim_{u\to 0} \Ad (\lambda(ut))\varphi=\varphi_0
\end{equation*}
Point (3) follows from the fact that the set of unstable points is closed.

Now, point (4) from the fact that all such $\lambda'$ generate the center of $L(\lambda)$, and so the condition in the theorem ensures that the character $\chi_L$ on $\Rep (\tilde{Q}_\lambda)$ is precisely $\chi$. Point (5) follows immediately from this, since the coincidence of the characters guarantees that semistable points of $\Rep (\tilde{Q}_\lambda)$ are sent to semistable representations of $\tilde{Q}$.
\end{proof}

One must be careful in interpreting this lemma. Let $\varphi$ be a representations, and supposed $\lim \lambda(t)\cdot \varphi$ exists. Given point (1) above, and since a parabolic is determined by its strictly dominant elements,\footnote{Recall here that dominant elements are dual of dominant characters in $\mathfrak{z}\oplus \mathfrak{c}$, cf. section \ref{parabolic}.} one might be tempted to conclude that $\lim \lambda'(t)\cdot \varphi$ exists for any dominant $\lambda'$ of $P_R(\lambda)$. However, this does not quite follow from (1), because we need $\lambda'$ do be dominant for $P_G(\lambda)$, \emph{not} $P_R(\lambda)$! There is in fact a difference in the components along the center of $P_R(\lambda)$: a dominant for this latter group has an arbitrary component along the center, whereas dominants for $P_G(\lambda)$ don't (they are only arbitrary along the smaller center of $P_G(\lambda)$ itself.) We will use the term \emph{$G,R$-dominant} to refer to dominants of both $P_R(\lambda)$ and $P_G(\lambda)$; or equivalently, for dominants of $P_G(\lambda)$ which happen to belong to $P_R(\lambda)$.

What is clear is that the stability condition is not really a matter of the one-parameter subgroups, but rather on the parabolics themselves. This is made clear in the next proposition, which resembles stability conditions in gauge theory.

\begin{proposition}\label{gaugestability}
Let $\varphi \in \Rep (\tilde{Q})$ be a representation, and $\mathscr{P}(\varphi)$ be the set of parabolics $P$ of $G$ such that $\varphi \in \mathfrak{p}$, and $P=P(\lambda)$ for some OPS $\lambda$ of $R$. Then, $\varphi$ is
\begin{enumerate}
\item semistable if and only if for every $G,R$-dominant weight $\beta$ of $P\in \mathscr{P}(\varphi)$ we have $\chi_*(\beta)\geq 0$.
\item stable if and only if for every $G,R$-dominant weight $\beta$ of $P\in \mathscr{P}(\varphi)$ we have $\chi_*(\beta)> 0$.
\end{enumerate}
\end{proposition}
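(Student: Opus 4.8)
The plan is to deduce Proposition \ref{gaugestability} from the Hilbert--Mumford criterion (\cite{king} 2.5) together with Lemma \ref{onepargen}(1). The point is to reorganize the set $\chi_*(G,\varphi)$ of one-parameter subgroups of $G$ along which $\varphi$ has a limit, according to the parabolic subgroup of $G$ they generate and whether that parabolic is of the form $P(\lambda)$ for a OPS $\lambda$ \emph{of $R$}. Indeed, since $\varphi$ is a representation of $\tilde{Q}$, it lives in the $R$-submodule $\Rep(\tilde{Q})$, and by definition stability is tested against OPS of $R$ (the group acting is $R$, not $G$). So the first step is: an OPS $\lambda$ of $R$ lies in $\chi_*(R,\varphi)$ if and only if $\varphi_\alpha\in\mathfrak{p}_G(\lambda)$ for all $\alpha$, i.e.\ $\varphi\in\mathfrak{p}_G(\lambda)$; this is exactly Lemma \ref{onepargen}(1). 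Hence $\chi_*(R,\varphi)$ is the union, over $P\in\mathscr{P}(\varphi)$, of the OPS $\lambda$ of $R$ with $P_G(\lambda)=P$.

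The second step is to translate ``$\langle\chi,\lambda\rangle\geq 0$ for all $\lambda\in\chi_*(R,\varphi)$'' into the stated condition on $G,R$-dominant weights. The key observation is that the Hilbert--Mumford pairing $\langle\chi,\lambda\rangle$ depends only on the component of $\lambda$ along $\chi_*$ (through the fixed invariant form), as recorded in the proof of Lemma 2.12; and by Proposition 2.13(3) it is invariant under conjugation. Fix $P\in\mathscr{P}(\varphi)$. The OPS $\lambda$ of $R$ with $P_G(\lambda)=P$ are, up to the $P$-conjugation that does not change the pairing and up to positive scaling, precisely the \emph{strictly $G,R$-dominant} weights of $P$ (using the Lemma characterizing $P\subset P(\beta)$ with equality iff $\beta$ strictly anti-dominant, applied inside $G$ but with the constraint that $\beta$ lie in the relevant subspace so that it is a weight of $R$). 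Since $\chi_*(\beta)\geq 0$ is a closed condition, requiring it for strictly $G,R$-dominant $\beta$ is equivalent to requiring it for all $G,R$-dominant $\beta$. Collecting over $P\in\mathscr{P}(\varphi)$ gives exactly part (1). For part (2), one argues identically but tracks the ``stable'' clause of the Hilbert--Mumford criterion: $\varphi$ is stable iff it is semistable and the only $\lambda\in\chi_*(R,\varphi)$ with $\langle\chi,\lambda\rangle=0$ lie in $\Delta$; since a nontrivial parabolic $P\subsetneq G$ meeting $R$ properly always contributes genuine (non-central) OPS of $R$, vanishing of $\chi_*$ on some $G,R$-dominant weight of such a $P$ produces a destabilizing OPS, so stability forces strict positivity, and conversely strict positivity on all $G,R$-dominant weights rules out any $\lambda\notin\Delta$ with zero pairing.

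I would organize the write-up as: (i) identify $\chi_*(R,\varphi)$ with $\bigsqcup_{P\in\mathscr{P}(\varphi)}\{\lambda\ \text{OPS of}\ R:\ P_G(\lambda)=P\}$ via Lemma \ref{onepargen}(1); (ii) for fixed $P$, use conjugation-invariance and scale-invariance of the sign of the pairing to reduce checking all such $\lambda$ to checking $G,R$-dominant weights of $P$; (iii) apply the Hilbert--Mumford criterion in its semistable and stable forms; (iv) assemble the equivalences.

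The main obstacle I anticipate is step (ii): making precise the passage from ``all OPS $\lambda$ of $R$ with $P_G(\lambda)=P$'' to ``all $G,R$-dominant weights of $P$''. The subtlety is exactly the one flagged in the paragraph preceding the Proposition: dominance must be measured for $P_G(\lambda)$ and simultaneously $\lambda$ must be an OPS of $R$, and these two constraints interact through the difference between the centers of $P_R(\lambda)$ and $P_G(\lambda)$. One must check that every $G,R$-dominant weight of $P$ is, after rescaling to an integral point, conjugate within $P$ to an honest OPS of $R$ generating $P$, and that conversely every OPS of $R$ generating $P$ is $P$-conjugate to a $G,R$-dominant one; here one leans on Lemma \ref{oneparparabolic}, the Lemma on dominant weights (``$P\subset P(\beta)$, with equality iff $\beta$ strictly anti-dominant''), and the fact that $K_R\subset K$ so that OPS of $R$ sit inside $\mathfrak{k}$ compatibly. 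Once this dictionary is set up cleanly the rest is a direct application of the already-proven Hilbert--Mumford criterion.
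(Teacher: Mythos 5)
Your proposal is correct and follows essentially the same route as the paper: reduce via the Hilbert--Mumford criterion and Lemma \ref{onepargen}(1) to a statement about $G,R$-dominant weights of the parabolics in $\mathscr{P}(\varphi)$, using conjugation- and scale-invariance of the sign of the pairing. The paper's own proof is far terser --- it treats the reduction to dominant weights as already established by the preceding discussion and only records the computation $\langle\chi,\lambda_\beta\rangle=\chi_*(\beta)$ (up to a positive integer factor for non-integral $\beta$) --- so your write-up simply makes explicit the steps the paper leaves implicit.
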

\begin{proof}
The only thing we need to prove is that it is enough to check that the Hilbert-Mumford pairing can be computed with the derivative of the character. If $\beta$ is integral, then this is the following computation:
\begin{equation*}
\chi (\lambda_\beta (e^s))=\chi (\exp (s\beta))=e^{s\chi_*(\beta)}=t^{\chi_*(\beta)}
\end{equation*}
Otherwise, there is always a positive integer $n$ such that $n\beta$ is an integral point, and we have $\langle \chi, \lambda_{n\beta}\rangle = \chi_*(n\beta) =n\chi_* (\beta)$, so that the sign doesn't really change.
\end{proof}

We could proceed along these lines, but since the final result is not very enlightening except for the case of type Z quivers, we'll abstain from further discussion at this point. Below we study the type Z case in detail, and the significance (as well as naturality) of this dependence on parabolics only will become clear.

\subsection{Jordan-H\"older objects}
\begin{definition}
A pair of parabolic subgroups $(P_R\subset R,P\subset G)$, is \emph{admissible} if $P\cap R=P_R$ and $\langle \chi , \lambda' \rangle = 0$ for all OPS $\lambda'$ \emph{of the group $R$} such that $P(\lambda')=P$.
\end{definition}
We'll need the existence of admissible parabolics below.
\begin{lemma}
If $\varphi$ is strictly semistable, then $\varphi \in P$ for some admissible pair $(P_R,P)$. Furthermore, there is a minimal such admissible $P$.
\end{lemma}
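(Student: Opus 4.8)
The plan is to extract an admissible pair from the instability data supplied by Kempf's lemma applied to the $G$-action, and then use the slope characterization in Proposition \ref{gaugestability} to identify which parabolic of $G$ carries the failure of strict stability. Suppose $\varphi$ is strictly semistable. Then there is a one-parameter subgroup $\lambda$ of $R$ with $\lim_{t\to 0}\lambda(t)\cdot\varphi$ existing, for which $\langle\chi,\lambda\rangle=0$ but $\lambda\notin\Delta$ (this is exactly the failure of the stable case of the Hilbert--Mumford criterion, and since $\varphi$ is a representation of a generalized quiver the destabilizing OPS can be taken in $R$, as in Lemma \ref{onepargen}). Set $P_R:=P_R(\lambda)$, the parabolic of $R$, and $P:=P_G(\lambda)$, the parabolic of $G$. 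Since both are defined by the \emph{same} $\lambda$, we automatically get $P\cap R=P_R$: an element of $P\cap R$ lies in $R$ and has a limit under $\Ad(\lambda(t))$, hence lies in $P_R(\lambda)$, and conversely.

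Next I would check admissibility, i.e.\ that $\langle\chi,\lambda'\rangle=0$ for every OPS $\lambda'$ of $R$ with $P_R(\lambda')=P$ --- wait, more precisely with $P(\lambda')=P$; here one must be careful that the condition is about OPS of $R$ realizing the $G$-parabolic $P$. By Kempf's lemma (the affine version quoted above), the destabilizing data is concentrated: for the value $M_G^\chi(\varphi)$ the set $\Lambda_G(\varphi)$ lies in a single conjugacy class and all its members share a common parabolic. But since $\varphi$ is only strictly semistable, $M_G^\chi(\varphi)=0$, and the relevant OPS are precisely those $\lambda'$ with limit existing and $\langle\chi,\lambda'\rangle=0$. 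The key point is that all OPS of $R$ with $P(\lambda')=P$ generate (together) the centre of the Levi $L(\lambda)$ inside $R$; the pairing $\langle\chi,\cdot\rangle$ is linear on this lattice, and the semistability of $\varphi$ forces it to be $\geq 0$ on the dominant cone, while $P(\lambda')=P$ forces $\lambda'$ to be a \emph{non-strictly} dominant weight (by the second Lemma in \S\ref{parabolic}), so $\langle\chi,\lambda'\rangle$ cannot be strictly positive either. Hence it vanishes identically, which is admissibility.

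For the final clause --- existence of a minimal admissible $P$ --- I would argue as follows. The collection of admissible $P$ containing $\varphi$ (i.e.\ with $\varphi\in\mathfrak{p}$) is non-empty by what we just proved. One checks that this collection is closed under a suitable ``intersection'' operation: given two admissible pairs $(P_R^{(1)},P^{(1)})$ and $(P_R^{(2)},P^{(2)})$ realized by OPS $\lambda_1,\lambda_2$ of $R$ with $\langle\chi,\lambda_i\rangle=0$, a generic positive combination $\lambda=\lambda_1+g\lambda_2 g^{-1}$ (for appropriate $g\in P^{(1)}\cap P^{(2)}$, chosen so that the combination still lies in a common maximal torus and still has $\varphi\in\mathfrak{p}(\lambda)$) defines a parabolic $P(\lambda)\subseteq P^{(1)}\cap P^{(2)}$ which is again admissible, since $\langle\chi,\lambda\rangle=\langle\chi,\lambda_1\rangle+\langle\chi,\lambda_2\rangle=0$ by part (3) of the Proposition on the Hilbert--Mumford pairing. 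Since the poset of parabolic subgroups of $G$ containing a fixed maximal torus is finite, iterating this gives a minimal element.

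The main obstacle I anticipate is the second step: verifying that admissibility (a condition quantified over \emph{all} OPS $\lambda'$ of $R$ realizing $P$, not just the distinguished $\lambda$) really does follow, and in particular controlling the interplay between $R$-dominance and $G$-dominance flagged in the remark after Lemma \ref{onepargen}. One has to use that the OPS in question are $G,R$-dominant weights of the admissible pair, invoke the characterization $P(\lambda')=P(\beta)$ iff $\beta$ is strictly anti-dominant (forcing our $\lambda'$ to the boundary of the dominant cone), and combine this with the slope inequality from Proposition \ref{gaugestability}(1) to pinch $\langle\chi,\lambda'\rangle$ to zero. The minimality argument's only delicate point is ensuring the combined OPS still destabilizes $\varphi$ in the weak sense ($\varphi\in\mathfrak p(\lambda)$), which is where one uses that $\varphi\in\mathfrak p(\lambda_1)\cap\mathfrak p(\lambda_2)$ and that these parabolics share a common Borel.
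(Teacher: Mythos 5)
Your overall strategy matches the paper's: pick a destabilizing OPS $\lambda$ of $R$ with $\langle\chi,\lambda\rangle=0$, set $(P_R,P)=(P_R(\lambda),P_G(\lambda))$, verify admissibility from semistability, and then intersect admissible parabolics to get a minimal one. The opening and closing steps are fine in outline. But the central step --- admissibility --- contains a genuine gap, and in fact an inverted claim. You assert that $P(\lambda')=P$ forces $\lambda'$ to be a \emph{non-strictly} dominant weight, ``to the boundary of the dominant cone,'' and from this conclude $\langle\chi,\lambda'\rangle\leq 0$. This is backwards: the lemma in Section \ref{parabolic} says $P(\lambda')=P$ exactly when $\lambda'$ is \emph{strictly} dominant, i.e.\ lies in the relative \emph{interior} of the dominant cone. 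And even granting your version, there is no mechanism in your argument producing the inequality $\langle\chi,\lambda'\rangle\leq 0$; being on (or off) the boundary of a cone says nothing about the sign of a linear functional there. What actually closes the argument is the combination of (i) $\chi_*\geq 0$ on the whole $G,R$-dominant cone of $P$ (semistability via Proposition \ref{gaugestability}, which applies because $\varphi\in\mathfrak p\subset\mathfrak p(\lambda')$ for every dominant $\lambda'$), and (ii) $\chi_*(\lambda)=0$ at the \emph{interior} point $\lambda$: a linear functional nonnegative on a convex cone and vanishing at an interior point vanishes identically on the cone. The paper carries out the same pinching in coordinates, writing $\beta=z_\beta+\sum_j\beta_j\alpha_j^G+\sum_i\beta_i\alpha_i$ with all $\beta_j,\beta_i<0$ by strict dominance, using $\chi_*(z_\beta)=0$ and $\chi_*(\alpha_j^G),\chi_*(\alpha_i)\leq 0$ to force each $\chi_*(\alpha_j^G)=\chi_*(\alpha_i)=0$, and then observing that every dominant of the pair is a combination of these same weights. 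Either route works, but your proposal as written supplies neither.

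A smaller issue in the minimality step: showing $\langle\chi,\lambda_1+\lambda_2'\rangle=0$ for one particular combination only gives the vanishing of the pairing on one OPS realizing the intersected parabolic, whereas admissibility quantifies over \emph{all} OPS of $R$ realizing it; you need the same ``vanishes on the generating simple weights, hence on all dominants'' argument again (this is why the paper phrases the intersection as $A_1\cup A_2$ at the level of simple roots rather than as a sum of OPS). You correctly flag that $\varphi\in\mathfrak p(\lambda)$ must be preserved, which is the other point to check.
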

\begin{proof}
If $\varphi$ is strictly semistable, then there is a one-parameter subgroup $\lambda_\beta$ with $\langle \chi, \lambda_\beta \rangle=0$ for which the limit exists. The restriction that $\lambda_\beta$ be strictly dominant for both $P(\lambda_\beta)$ and $P_R(\lambda_\beta)$ is precisely that there is a decomposition
\begin{equation*}
\beta=z_\beta+\sum_j \beta_j\alpha^G_j + \sum_i \beta_i \alpha_i
\end{equation*}
where $z_\beta \in \mathfrak{z}(G)$, $z_\beta+\sum_j \alpha_j^G \in \mathfrak{z}(R)$ with $\alpha_j^G$ corresponding to positive combinations of simple weights corresponding to $P_G(\lambda)$, and the $\alpha_i$ are the simple weights corresponding to $P_R(\lambda)$. From the fact that $\beta$ is strictly dominant for both $P_R(\lambda)$ and $P_G(\lambda)$ it follows that $\beta_j<0$ and $\beta_i<0$. We also assumed that $\chi$ is trivial on the center of $G$ to ensure the existence of semistable points, so that $\chi_* (z_\beta)=0$; it follows that if $\chi_*(\beta)=0$, then $\chi_*(\alpha_j^G)=\chi_*(\alpha_i)=0$. But every dominant of $P_R(\beta)$ can be expressed in terms of the same $\alpha_j^G$ and $\alpha_i$, so $P_R(\lambda)$ is admissible. 

To prove that there is a minimal admissible, it is enough to remark the following: if $P_1$ and $P_2$ are admissible, and defined by sets of simple roots $A_1$ and $A_2$, respectively, then $A_1\cup A_2$ defines an admissible parabolic smaller than both. This is enough since this reduces the semisimple rank, which is finite to start with.
\end{proof}

We will also need the following result:
\begin{lemma}\label{indadmissible}
Suppose $(P_R,P)$ is admissible for $\varphi$, and let $P_R=L_R U_R$ and $P=LU$ be the Levi decompositions with $L_R=L\cap R$. If $(P'_R\subset L_R, P'\subset L)$ is admissible for $\varphi_0=\lim \lambda(t)\cdot \varphi$, then $(P'_RU_R, P'U)$ is admissible for $\varphi$.
\end{lemma}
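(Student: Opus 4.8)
The plan is to verify the two defining conditions of admissibility for the pair $(P'_RU_R, P'U)$ directly against the representation $\varphi$, leaning on the transitivity of Levi/parabolic constructions and the compatibility of characters established for $\tilde Q_\lambda$ (Lemma \ref{levigenquiver} and the discussion of $\chi_L$). First I would record the structural facts I need: since $P'_R\subset L_R$ and $P'\subset L$ are parabolic, the subgroups $P'_RU_R\subset R$ and $P'U\subset G$ are parabolic (a parabolic of a Levi of a parabolic is parabolic, by the standard description via simple roots in section \ref{parabolic}), and moreover $P'U = P(\lambda)$ for an OPS $\lambda$ of $R$ precisely when $\lambda$ is, up to the center of $L$, an OPS defining $P'$ inside $L$. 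The key bookkeeping is the decomposition of the cocharacter lattice of $R$: an OPS $\lambda'$ of $R$ with $P_R(\lambda') = P'_RU_R$ decomposes as $\lambda' = \mu' + \nu$ where $\mu'$ is an OPS of $L_R$ with $P_{L_R}(\mu') = P'_R$ and $\nu$ lies in the center of $L_R$ and satisfies $P_R(\nu)\supseteq P_R = L_RU_R$; conversely every such $\lambda'$ arises this way.

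Next I would check the centralizer condition $ (P'_RU_R)\cap R$... wait — the first condition is $(P'U)\cap R = P'_RU_R$. For this I intersect: $(P'U)\cap R = (P'\cap R)(U\cap R)$ using that $P=LU$ and $L\cap R = L_R$, $U\cap R = U_R$ (from $(P_R,P)$ admissible), and then $P'\cap R = P'\cap L\cap R = P'\cap L_R = P'_R$ since $(P'_R, P')$ is admissible for $\varphi_0$ inside $(L_R,L)$. This gives $(P'U)\cap R = P'_R U_R$ as required. I should also confirm $\varphi \in \mathfrak{p}'\oplus\mathfrak{u}$: since $\varphi\in\mathfrak{p} = \mathfrak{l}\oplus\mathfrak{u}$, write $\varphi = \varphi_0 + \varphi_+$ with $\varphi_+\in\mathfrak u$ the part of positive weight; $\varphi_0\in\mathfrak{l}$ is the limit $\lim\lambda(t)\cdot\varphi$, and $\varphi_0\in\mathfrak{p}'$ by hypothesis, so $\varphi\in\mathfrak{p}'\oplus\mathfrak{u}$.

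The real content is the pairing condition: I must show $\langle\chi,\lambda'\rangle = 0$ for every OPS $\lambda'$ of $R$ with $P_R(\lambda') = P'_RU_R$. Using the decomposition $\lambda' = \mu' + \nu$ above, linearity of the Hilbert-Mumford pairing (Proposition, part where it factors through $\chi_*$) gives $\langle\chi,\lambda'\rangle = \chi_*(\mu') + \chi_*(\nu)$. Now $\nu$ is an OPS in the center of $L_R$ with $P_R(\nu)\supseteq L_RU_R = P_R$, so $P_R(\nu')=P_R$ for $\nu'$ generic in the relevant cone — here I'd invoke admissibility of $(P_R,P)$ to conclude $\chi_*(\nu)=0$ (more precisely, $\nu$ lies in the span of the cocharacters $\lambda''$ with $P_R(\lambda'')=P_R$, possibly after adding a central-in-$G$ piece on which $\chi_*$ vanishes anyway). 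For the $\mu'$ term: $\mu'$ is an OPS of $L_R$ with $P_{L_R}(\mu') = P'_R$, hence $P_L(\mu') = P'$ (since $P'_R = P'\cap L$... i.e. $P'_R = P'\cap R\cap L$ and $\mu'$ defines $P'$ inside $L$); admissibility of $(P'_R,P')$ for $\varphi_0$ was stated \emph{with respect to the character $\chi_L$ on $\Rep(\tilde Q_\lambda)$}, so it gives $\langle\chi_L,\mu'\rangle = 0$. The final step is to reconcile $\chi_L$ with $\chi$: by construction $(\chi_L)_* = \chi_*\circ p_{\mathfrak z(L)}$, and since $\mu'\in\mathfrak{l}_R$ with $P_{L_R}(\mu')=P'_R$, the image $p_{\mathfrak z(L)}(\mu')$ differs from $\mu'$ by an element of $\mathfrak z(L_R)\cap(\text{complement})$; one checks the discrepancy again lands in cocharacters fixing $P_R$ inside $R$, on which $\chi_*$ vanishes by admissibility of $(P_R,P)$. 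Hence $\chi_*(\mu') = (\chi_L)_*(\mu') = 0$, and combining, $\langle\chi,\lambda'\rangle = 0$.

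I expect the main obstacle to be precisely this last reconciliation between $\chi$ and $\chi_L$ under the various projections to centers of Levis: one must track carefully that every correction term introduced either by passing from $P_R(\lambda')$-dominant cocharacters of $R$ to $P'$-dominant cocharacters of $L$, or by the projection $p_{\mathfrak z(L)}$, lies in the subspace of cocharacters that define $P_R$ (equivalently $P$) and is therefore killed by $\chi_*$ by the admissibility hypothesis on $(P_R,P)$. Organizing the cocharacter lattice of $R$ into the $\mathfrak z(G)$-part, the "$\mathfrak z(L_R)$ relative to $\mathfrak z(G)$"-part, and the semisimple part of $L_R$, and checking on which summands $\chi_*$ is forced to vanish, is the bookkeeping that makes the argument go through; once that decomposition is set up cleanly the rest is formal.
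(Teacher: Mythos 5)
Your argument is correct and follows essentially the same route as the paper: the paper's entire proof is the single observation (citing Ramanathan, 3.5.9) that every dominant of $P'U$ decomposes, after scaling by an integer, as a sum of a dominant of $P'$ and a dominant of $P$, whence the pairing vanishes by linearity together with the two admissibility hypotheses --- exactly the decomposition $\lambda'=\mu'+\nu$ that you set up by hand. Your additional verifications (the intersection condition $(P'U)\cap R = P'_RU_R$ and the reconciliation of $\chi$ with $\chi_L$ via $p_{\mathfrak{z}(L)}$) are details the paper leaves implicit, so they fill in rather than diverge from its proof.
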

\begin{proof}
This follows from the fact that for every dominant $\beta$ of $P_1U$ that is some integer $n$ such that $n\beta=\beta_1 + \beta'$ where $\beta_1$ is a dominant of $P_1$ and $\beta'$ is a dominant of $P$ (cf. \cite{ramanathan} 3.5.9.)
\end{proof}

Recall that two points of $V$ are \emph{S-equivalent} if their orbit closures intersect, or, alternatively, both closures share a (necessarily unique) closed orbit. Jordan-H\"older objects select a representative in the closed orbit of each S-equivalence class, and can now be constructed along standard lines by an inductive process. 

The next result determines the existence of Jordan-H\"older objects for generalized quivers.

\begin{proposition}\label{jhgen}
Let $\varphi\in \Rep (\tilde{Q})$ be a semistable representation. Then, there is a parabolic subgroups $P_R\subset R$ and $P\subset G$, $P_R=P\cap R$ with Lie algebra $\mathfrak{p}$ such that $\varphi_\alpha\in \mathfrak{p}$, and if $p:P\to L$ is the projection onto a Levi subgroup, $\varphi_{JH}:=p_* (\varphi)$ is a stable representation of $\tilde{Q}_L$. Furthermore, under the inclusion as a representation of $\tilde{Q}$, $\varphi_{JH}$ is polystable and S-equivalent to $\varphi$.
\end{proposition}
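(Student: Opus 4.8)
The plan is to construct $\varphi_{JH}$ by an inductive descent through admissible parabolics, mirroring the classical Jordan–Hölder construction but carried out intrinsically in terms of parabolic and Levi subgroups of $G$ and $R$. First I would dispose of the easy case: if $\varphi$ is already stable, take $P = G$, $P_R = R$, and $\varphi_{JH} = \varphi$. Otherwise $\varphi$ is strictly semistable, so by the lemma on existence of admissible parabolics there is a minimal admissible pair $(P_R, P)$ with $\varphi_\alpha \in \mathfrak{p}$ for all $\alpha$, realized as $P = P(\lambda)$, $P_R = P_R(\lambda)$ for some OPS $\lambda$ of $R$ with $\langle \chi, \lambda \rangle = 0$. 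Set $\varphi_0 = \lim_{t\to 0}\lambda(t)\cdot\varphi$; by Lemma \ref{onepargen}(2) this lies in $\Rep(\tilde Q_\lambda)$, and since the pair is admissible the character $\chi_L$ on $\Rep(\tilde Q_L)$ coincides with the restriction of $\chi$, so by Lemma \ref{onepargen}(4),(5) $\varphi_0$ is a semistable representation of $\tilde Q_L$ and also semistable as a representation of $\tilde Q$. Applying the projection $p:P\to L$, one has $p_*(\varphi) = \varphi_0$ because the limit under $\lambda(t)$ is precisely the projection onto the Levi part (the unipotent directions are exactly those on which $\lambda$ acts with positive weight).

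Next I would iterate. If $\varphi_0$ is already stable as a representation of $\tilde Q_L$ we are done with this pair. If not, apply the construction inside $\tilde Q_L$ to obtain an admissible pair $(P'_R \subset L_R, P' \subset L)$ for $\varphi_0$ and a further projected representation which is "more stable." The key point making the induction terminate is that passing to a proper admissible parabolic strictly decreases the semisimple rank of the ambient group (the minimal admissible parabolic inside $L$ is proper in $L$ whenever $\varphi_0$ is strictly semistable), and semisimple rank is a nonnegative integer. To assemble the pairs across the iteration into a single admissible pair for the original $\varphi$, I would invoke Lemma \ref{indadmissible}: if $(P'_R U_R, P'U)$ with $P_R = L_R U_R$, $P = LU$, then admissibility of $(P'_R, P')$ for $\varphi_0$ promotes to admissibility of $(P'_R U_R, P'U)$ for $\varphi$. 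Composing the projections $P \to L$ and then $P'U \to P' \to L'$ gives a single projection $P'' \to L''$ whose effect on $\varphi$ is the iterated limit, so after finitely many steps we reach a pair $(P_R'', P'')$ with $\varphi_{JH} := p''_*(\varphi)$ a \emph{stable} representation of $\tilde Q_{L''}$, with $\varphi_\alpha \in \mathfrak p''$ throughout.

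It remains to check the last sentence: viewed as a representation of $\tilde Q$ (via $\Rep(\tilde Q_{L''}) \subset \Rep(\tilde Q)$), $\varphi_{JH}$ is polystable and S-equivalent to $\varphi$. S-equivalence follows because $\varphi_{JH}$ is obtained from $\varphi$ as an iterated limit under one-parameter subgroups with $\langle\chi,\lambda\rangle=0$, hence lies in the closure of the orbit $G\cdot\varphi$ while remaining semistable — so $\varphi$ and $\varphi_{JH}$ have intersecting orbit closures within the semistable locus. For polystability I would argue that the orbit $G\cdot\varphi_{JH}$ is closed in $\Rep(\tilde Q)^{ss}$: if it were not, there would be an OPS $\mu$ of $G$ with $\langle\chi,\mu\rangle = 0$ and $\lim\mu(t)\cdot\varphi_{JH}$ existing and lying outside the orbit; conjugating $\mu$ so that it, together with $P(\mu)\cap L''$, is compatible with $L''$ (using the Kempf-type structure theory and that $\varphi_{JH}\in \mathfrak l''$), one produces an OPS of $L''$ destabilizing $\varphi_{JH}$ or witnessing a non-closed orbit in $\Rep(\tilde Q_{L''})^{ss}$, contradicting stability of $\varphi_{JH}$ there (stable orbits are closed, and simple). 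The main obstacle I anticipate is precisely this last step — relating a one-parameter degeneration of $\varphi_{JH}$ taken in the big group $G$ to one taken inside the Levi $L''$, since an arbitrary OPS of $G$ for which the limit exists need not normalize $L''$ nor be $G,R$-dominant for the relevant parabolic; handling this requires care with the mismatch between dominance for $P_G$ and dominance for $P_R$ flagged in the discussion after Lemma \ref{onepargen}, and is the technical heart of the proof.
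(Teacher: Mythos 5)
Your construction of $\varphi_{JH}$ is essentially the paper's: take a minimal admissible pair, project to the Levi (equivalently, take the limit along a dominant OPS), and use Lemma \ref{indadmissible} to show that a further destabilization of the projected object would contradict minimality (you phrase this as an induction on semisimple rank rather than a one-step contradiction, which matches the Remark following the proposition rather than the proof itself, but the content is the same). The S-equivalence argument via iterated limits with $\langle\chi,\lambda\rangle=0$ also agrees with the paper.

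The genuine gap is the polystability step, and you have correctly diagnosed it yourself: your proposed argument requires converting an arbitrary OPS $\mu$ of $G$ (or $R$) for which $\lim\mu(t)\cdot\varphi_{JH}$ exists into an OPS of the Levi $L''$, and there is no reason such a $\mu$ should normalize $L''$ or be $G,R$-dominant for the relevant parabolic; the ``conjugating $\mu$ so that it is compatible with $L''$'' step is precisely what you cannot do in general, so the contradiction with stability of $\varphi_{JH}$ in $\Rep(\tilde{Q}_{L''})$ is never actually produced. The paper avoids OPS analysis in the big group entirely: since $\varphi_{JH}$ is \emph{stable}, hence polystable, as a representation of $\tilde{Q}_{L}$, the orbit $L_R\cdot\varphi_{JH}$ is already closed in $\Rep(\tilde{Q}_{L})$; the full orbit $R\cdot\varphi_{JH}$ is then the image of this closed set swept out through the complete variety $R/P_R(\varphi)$, and the image of a closed set under a proper map is closed. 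Replacing your OPS argument by this properness argument closes the gap; the rest of your proposal stands.
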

We should remark here that generally speaking, \emph{closed} orbits on the boundary of $R\cdot \varphi$ can always be reached by some one-parameter subgroup. However, the statement in the theorem is stronger insofar as it determines another quiver setting for which the Jordan-H\"older object is stable.
\begin{proof}
If $\varphi$ is stable, nothing needs to be proven. Otherwise, take a minimal admissible parabolic $(P^{min}_R,P^{min})$ for $\varphi$. Let $L_R$ and $L$ be Levis of $P_R$ and $P$, respectively, with $L_R=L\cap R$, and let $p_{min}:P^{min}\to L^{min}$ be the projection. We claim that the $L^{min}_R$ representation $\varphi_{JH}:=p_{min}(\varphi)$ is stable; it is certainly semistable by Lemma \ref{onepargen}. On the other hand, if we assume it is not stable, it admits a pair of parabolic $(P_R \subset L_R^{min}, P\subset L^{min})$. But by Lemma \ref{indadmissible} we have seen that then we can from $(P_R,P)$ construct an admissible pair $(P'_R\subset P_R^{min}, P'\subsetneq P^{min})$, which is a contradiction. As a $G$-representations, $\varphi_{JH}$ is certainly S-equivalent to $\varphi$, since this projection is the limit of the flow by $\lambda_\beta$ for some dominant $\beta$ of $P_R^{min}$, and so the closures of the two orbits intersect. Finally, we must prove that again as a $G$-representation it is polystable, i.e., that the orbit $R\cdot \varphi_0$ is closed. But in fact, this orbit is the image of $L_R(\lambda)\cdot \varphi_0$ under the action of $R/P_R(\varphi)$; the latter is proper and the former is a closed subset of $\Rep (\tilde{Q}_{min})$, so the image is also closed since it is the action of a proper group.
\end{proof}

\begin{remark}
Note that we can reach a minimal admissible parabolic by successively considering maximal admissible parabolics, and so arrive at an inductive process which more closely resembles the usual construction of Jordan-H\"older objects. To make this precise, assuming that $\varphi$ is strictly semistable, choose a maximal admissible parabolic $P_1$; from Lemma \ref{onepargen}, we conclude that $\varphi_1:=p_1(\varphi)$ is semistable. If it is stable, we are done; otherwise, choose a parabolic $P_2$ in $L_1$ that is maximally admissible for $\varphi_1$ and repeat. Since the semisimple rank keeps decreasing and also generalized quivers determind by tori are automatically stable, the process must stop at a finite number of steps. That this is the same as above follows again by the construction above for each $P_i\subset L_{i-1}$ of an parabolic $P'_i\subset G$ that is admissible for $\varphi$. We conclude that the process stops precisely when $P'_i$ is a minimal admissible. 
\end{remark}

\begin{corollary}
Two representations $\varphi$ and $\varphi'$ are S-equivalent if and only if there is an $r\in R$ such that $\varphi_{JH} = r\cdot \varphi'_{JH}$.
\end{corollary}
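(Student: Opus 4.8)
The plan is to deduce the corollary from Proposition \ref{jhgen} together with the characterization of S-equivalence in terms of closed orbits. First I would recall the key structural fact established earlier: every semistable $\varphi$ is S-equivalent to its Jordan--H\"older object $\varphi_{JH}$, and $\varphi_{JH}$ is polystable, meaning $R\cdot\varphi_{JH}$ is closed in $\Rep(\tilde Q)^{ss}$. Since S-equivalence is an equivalence relation whose classes each contain a \emph{unique} closed $R$-orbit (this is the standard GIT fact recalled just before Proposition \ref{jhgen}), the closed orbit in the class of a semistable $\varphi$ is exactly $R\cdot\varphi_{JH}$.

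The backward direction is then immediate: if $\varphi_{JH}=r\cdot\varphi'_{JH}$ for some $r\in R$, then $R\cdot\varphi_{JH}=R\cdot\varphi'_{JH}$, so $\varphi$ and $\varphi'$ have the same associated closed orbit, and since $\overline{R\cdot\varphi}$ contains $R\cdot\varphi_{JH}$ and likewise for $\varphi'$, the two orbit closures intersect; hence $\varphi\sim_S\varphi'$. For the forward direction, suppose $\varphi\sim_S\varphi'$. Then $\overline{R\cdot\varphi}$ and $\overline{R\cdot\varphi'}$ meet, so they contain a common closed orbit; by uniqueness of the closed orbit in a closure, $R\cdot\varphi_{JH}=R\cdot\varphi'_{JH}$, which is precisely the existence of $r\in R$ with $\varphi_{JH}=r\cdot\varphi'_{JH}$.

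The one point that needs a little care — and which I expect to be the main (though minor) obstacle — is making sure that ``S-equivalent'' as defined (orbit closures intersect, equivalently share a common closed orbit) really does give a well-defined equivalence relation with a unique closed orbit per class in the affine GIT setting, so that the two conditions ``same associated closed orbit'' and ``orbit closures intersect'' coincide. This follows from the fact that $\Rep(\tilde Q)^{ss}$ is an affine-type GIT quotient situation where each fibre of $\pi:\Rep(\tilde Q)^{ss}\to \Rep(\tilde Q)\sslash_\chi R$ contains exactly one closed orbit, and $\pi(\varphi)=\pi(\varphi')$ iff $\overline{R\cdot\varphi}\cap\overline{R\cdot\varphi'}\neq\varnothing$; all of this is contained in the GIT background of Section 2. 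Once that is in place, the corollary is a formal consequence of Proposition \ref{jhgen} and requires no further computation.

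\begin{proof}
By Proposition \ref{jhgen}, for any semistable $\varphi$ the Jordan--H\"older object $\varphi_{JH}$ is polystable and S-equivalent to $\varphi$; in particular $R\cdot\varphi_{JH}$ is the unique closed $R$-orbit contained in $\overline{R\cdot\varphi}$. If $\varphi_{JH}=r\cdot\varphi'_{JH}$ for some $r\in R$, then $R\cdot\varphi_{JH}=R\cdot\varphi'_{JH}$ is contained in both $\overline{R\cdot\varphi}$ and $\overline{R\cdot\varphi'}$, so these closures intersect and $\varphi$ is S-equivalent to $\varphi'$. Conversely, if $\varphi$ and $\varphi'$ are S-equivalent, then $\overline{R\cdot\varphi}$ and $\overline{R\cdot\varphi'}$ share a closed orbit; by uniqueness of the closed orbit in each closure, this forces $R\cdot\varphi_{JH}=R\cdot\varphi'_{JH}$, i.e. $\varphi_{JH}=r\cdot\varphi'_{JH}$ for some $r\in R$.
\end{proof}
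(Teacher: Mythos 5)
Your proof is correct and is exactly the argument the paper intends: it states this corollary without proof as an immediate consequence of Proposition \ref{jhgen}, relying on the same two facts you use (the Jordan--H\"older object is polystable and S-equivalent to the original, and each S-equivalence class of semistable points contains a unique closed orbit). Your added care about the uniqueness of the closed orbit in the affine setting is appropriate and fully covered by the GIT background in Section 2.
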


\subsection{The local structure of the quotient}

We will now investigate the local structure of the quotient, starting with the deformation theory of generalized quivers.

\begin{lemma}
Let $\varphi$ be a polystable representation. The deformation space $N_\varphi$ of $\varphi$ is a representation space of a $G$ generalized quiver $\tilde{Q}_\varphi$ with symmetry group $R_\varphi:=\Stab(\varphi)$
\end{lemma}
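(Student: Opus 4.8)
The plan is to deduce this from Luna's étale slice theorem. Since $\varphi$ is polystable, its $R$-orbit is closed in $\Rep(\tilde{Q})^{\chi\text{-}ss}$; equivalently, after lifting to the total space of the linearization $L^{-1}$ (which is affine) the orbit $R\cdot\hat{\varphi}$ is closed, or, via the Kempf--Ness theorem, $\varphi$ may be moved into the zero level set of the shifted moment map. In either form one has a closed orbit of the reductive group $R$ on an affine variety, so by Matsushima's criterion $R_\varphi:=\Stab(\varphi)$ is reductive; being a stabilizer it is also Zariski-closed, and it sits inside $R\subset G$, so it is a closed reductive subgroup of $G$. Luna's theorem then furnishes an $R_\varphi$-invariant locally closed slice through $\varphi$ whose tangent space at $\varphi$ is the normal space $N_\varphi = T_\varphi\Rep(\tilde{Q})/T_\varphi(R\cdot\varphi)$. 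Because $\Rep(\tilde{Q})$ is a linear $R$-representation we have $T_\varphi\Rep(\tilde{Q})=\Rep(\tilde{Q})$ and $T_\varphi(R\cdot\varphi)=\mathfrak{r}\cdot\varphi$, the image of the infinitesimal action $\mathfrak{r}=\mathrm{Lie}\,R\to\Rep(\tilde{Q})$; this subspace is $R_\varphi$-stable, since $g\cdot(\xi\cdot\varphi)=(\Ad(g)\xi)\cdot\varphi$ for $g\in R_\varphi$. Using reductivity of $R_\varphi$ I would identify $N_\varphi$ with an $R_\varphi$-invariant complement of $\mathfrak{r}\cdot\varphi$ inside $\Rep(\tilde{Q})$; this is the candidate representation space for $\tilde{Q}_\varphi$, with symmetry group $R_\varphi$.

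It then remains to check the defining axioms of a generalized $G$-quiver for the pair $(R_\varphi, N_\varphi)$. Finite-dimensionality is immediate, and $R_\varphi$ is a closed reductive subgroup of $G$ by the above. For the constraint on irreducible constituents: by construction $N_\varphi$ is an $R_\varphi$-direct summand of the restriction $\Rep(\tilde{Q})|_{R_\varphi}$, hence every irreducible constituent of $N_\varphi$ is a constituent of $\Rep(\tilde{Q})|_{R_\varphi}$. By the definition of $\tilde{Q}$, each irreducible constituent of $\Rep(\tilde{Q})$ as an $\Ad R$-module is isomorphic to a constituent of $\mathfrak{g}$ as an $\Ad R$-module; restricting every such constituent to $R_\varphi\subset R$ shows that the constituents of $\Rep(\tilde{Q})|_{R_\varphi}$ — and therefore those of $N_\varphi$ — are all among the constituents of $\mathfrak{g}|_{R_\varphi}$, as required.

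The one point that genuinely requires care is the requirement that the trivial $R_\varphi$-representation not occur in $N_\varphi$. Even though $\Rep(\tilde{Q})$ carries no trivial $R$-summand, its restriction to the smaller group $R_\varphi$ may acquire trivial summands, and these can survive into $N_\varphi$ — the classical shadow of this is a vertex with a loop, where the local quiver of a semisimple representation picks up the central directions of the block summands $\mathfrak{gl}$. I would handle this by decomposing $N_\varphi = \Rep(\tilde{Q}_\varphi)\oplus T$ as $R_\varphi$-modules, where $T$ is the (finite-dimensional) trivial isotypic component and $\Rep(\tilde{Q}_\varphi)$ its canonical complement, and then observing that $T$ contributes only a free affine factor to $N_\varphi\sslash R_\varphi$, so that this truncation is harmless for the étale-local description of the moduli space. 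I expect this bookkeeping — together with the precise verification of reductivity of $R_\varphi$ when $\chi|_R$ is nontrivial, which uses the Kempf--Ness reduction and the observation that $R_\varphi$ is an extension of a subgroup of $\mathbb{G}_m$ by the reductive group $R_\varphi\cap\ker\chi$, hence reductive — to be the main obstacle; the rest is a direct application of the slice theorem.
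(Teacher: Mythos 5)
Your argument is correct and its core is the same as the paper's: both identify $N_\varphi$ with an invariant complement of the image $\mathrm{ad}(\mathfrak{r})\varphi$ of the infinitesimal action, i.e.\ both split the sequence $0\to \mathrm{ad}(\mathfrak{r})\varphi\to\Rep(\tilde{Q})\to N_\varphi\to 0$ and then observe that the complement is a sum of $\Ad(\Stab(\varphi))$-submodules of $\mathfrak{g}$. Where you differ is in how the splitting is produced and in how much you actually verify. The paper produces the complement with a hermitian metric, which a priori only gives invariance under the compact stabilizer $K_R\cap R_\varphi$, and it leaves implicit both the reductivity of $R_\varphi$ and the fact that for polystable $\varphi$ the stabilizer is the complexification of its compact part; your route through closed orbits, Matsushima/Kempf--Ness and complete reducibility makes these points explicit, which matters because ``closed reductive subgroup of $G$'' is part of the definition being verified. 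More importantly, you flag a point the paper's proof does not address at all: the definition of a generalized quiver forbids trivial summands, and the restriction of $\Rep(\tilde{Q})$ to the smaller group $R_\varphi$ --- hence $N_\varphi$ --- can certainly acquire them (exactly the loop and central directions in the LeBruyn--Procesi local quiver that the paper itself invokes later). Your decomposition $N_\varphi=\Rep(\tilde{Q}_\varphi)\oplus T$, with $T$ the trivial isotypic piece contributing only a free affine factor to $N_\varphi\sslash R_\varphi$, is the honest way to reconcile the lemma with the paper's own definition; as written, the paper's proof simply omits this verification, so your extra care is a completion rather than a detour.
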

\begin{proof}
Since our variety is an affine space, this reduces to the following sequence of vector spaces:
\begin{equation*}
0\longrightarrow \mathrm{ad}(\mathfrak{r})\varphi \longrightarrow \Rep (\tilde{Q}) \longrightarrow N_\varphi \longrightarrow 0
\end{equation*}

Picking a hermitian metric, we can now find a splitting of $\Rep (\tilde{Q})$ which is also a splitting as an $\ad (\mathfrak{r})$-module, which allows us to identify $N_\varphi$ as a subspace of $\Rep (\tilde{Q})$. On the other hand, the action of $R_\varphi:=\Stab (\varphi)$ respects  this splitting, so that $N_\varphi$ is a sum of $\Ad (\Stab (\varphi))$-submodules of $\mathfrak{g}$.
\end{proof}

A immediate application follows by Luna's results \cite{luna1} III.1.

\begin{theorem}
There is an \'etale map from a neighbourhood of the origin in $\Rep (\tilde{Q}_\varphi)\sslash R_\varphi$ to a neighbourhood of $\varphi$ in the quotient $\Rep (\tilde{Q})\sslash R$.
\end{theorem}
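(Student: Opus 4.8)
The plan is to recognize this as a direct application of Luna's \'etale slice theorem, so the only real work is setting up the hypotheses and passing between the projective-over-affine GIT quotient $\sslash_\chi$ and the plain affine quotient to which Luna's theorem applies. First I would reduce to an affine chart: since $\varphi$ is polystable, its orbit $R\cdot\varphi$ is closed in the semistable locus $\Rep(\tilde Q)^{ss}$ (by the topological/King characterization of polystability), and $\Rep(\tilde Q)^{ss}$ is covered by the $R$-stable affine opens $\Rep(\tilde Q)_f$ on which the $\chi$-semi-invariants $f$ do not vanish, with $\Rep(\tilde Q)\sslash_\chi R$ correspondingly glued from the affine GIT quotients $\Rep(\tilde Q)_f\sslash R$. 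Choosing $f$ with $f(\varphi)\neq 0$ and setting $X:=\Rep(\tilde Q)_f$, this is a smooth affine $R$-variety with $T_\varphi X = \Rep(\tilde Q)$, in which $R\cdot\varphi$ is still closed; Matsushima's criterion then gives that $R_\varphi=\Stab(\varphi)$ is reductive, which is what makes everything work.

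Next I would produce the slice. Using the hermitian metric and reductivity of $R_\varphi$, the preceding lemma already splits $\Rep(\tilde Q)=\ad(\mathfrak r)\varphi\oplus N_\varphi$ as $R_\varphi$-modules, with $N_\varphi$ the deformation space and $(R_\varphi,N_\varphi)$ the generalized $G$-quiver $\tilde Q_\varphi$. Realizing $\varphi+N_\varphi$ as an affine-linear subspace, I would take $S:=X\cap(\varphi+N_\varphi)$, an $R_\varphi$-stable locally closed smooth subvariety through $\varphi$ whose tangent space $T_\varphi S=N_\varphi$ is a complement to $T_\varphi(R\cdot\varphi)=\ad(\mathfrak r)\varphi$; that is, $S$ is a slice at $\varphi$ in the sense of Luna. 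Then, by Luna's \'etale slice theorem (\cite{luna1} III.1), after shrinking $S$ to an $R_\varphi$-stable affine neighborhood of $\varphi$, the canonical map $R\times_{R_\varphi}S\to X$ is \'etale onto a saturated open neighborhood of $R\cdot\varphi$ and descends to an \'etale morphism $S\sslash R_\varphi\to X\sslash R$ sending the class of $\varphi$ to the class of $\varphi$. Since near the origin $S\sslash R_\varphi$ agrees with a neighborhood of $0$ in $N_\varphi\sslash R_\varphi=\Rep(\tilde Q_\varphi)\sslash R_\varphi$, and $X\sslash R$ is open in $\Rep(\tilde Q)\sslash_\chi R$ around the class of $\varphi$, composition yields the claimed \'etale map.

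The main obstacle is not any single computation but the bookkeeping around the linearization. One must check that the chart $X=\Rep(\tilde Q)_f$ really has $X\sslash R$ (the \emph{affine} quotient) equal to the corresponding affine open $D_+(f)$ of $\proj \bigoplus_n R_{\chi,n}$ --- standard, but it needs the $\mathbb{Z}$-grading bookkeeping --- and, relatedly, one must explain why the statement carries no character on $\tilde Q_\varphi$: the slice argument is run inside the affine chart $X$, so only the plain affine quotient $\Rep(\tilde Q_\varphi)\sslash R_\varphi$ appears, consistent with the polystability of the origin. A subsidiary point is verifying that an orbit closed in $\Rep(\tilde Q)^{ss}$ stays closed in each affine chart it meets, so that Matsushima applies and $R_\varphi$ is reductive; this is immediate since $X$ is open in $\Rep(\tilde Q)^{ss}$ and the orbit is already closed there.
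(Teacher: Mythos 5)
Your proposal is correct and follows essentially the same route as the paper, which simply invokes Luna's \'etale slice theorem (\cite{luna1} III.1) together with the preceding lemma identifying the deformation space $N_\varphi$ as $\Rep(\tilde Q_\varphi)$; you have merely filled in the standard details (reduction to an affine chart, closedness of the orbit there, reductivity of $R_\varphi$ via Matsushima, and the construction of the linear slice $\varphi+N_\varphi$). Nothing further is needed.
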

Since we're working with complex varieties, recall that this result in particular implies that there is a biholomorphism between neighbourhoods of the points in question in the classical topology.

\begin{remark}
Given our characterization of Jordan-H\"older objects above, one might be tempted to try to characterize the Luna strata in terms of certain Levi subgroups (especially since something of the sort can be accomplished for the Hesselink strata, as we'll see below.) However, a more careful analysis easily shows that this is not something we can expect to be possible, as the Luna stratification depends on stabilizers of representations, and a characterization of stabilizers will in general involve reductive subgroups that are smaller than Levis.
\end{remark}

\subsection{The Hesselink stratification}\label{hesselink}
Characterizing the instability type of generalized quivers involves finding a suitable characterization of $P(\varphi)$ which will involve $\Rep (\tilde{Q}_L)$ for some Levi $L$ of $P(\varphi)$. We'll denote by $P_G(\varphi)$ the parabolic of $G$ determined by a most destabilizing OPS. Note that the character for $\tilde{Q}_\lambda$ induced by $\chi$ is in this instance very simple. In fact, if we identify $\chi_*$ with a vector in $\mathfrak{k}$ through the invariant inner product, we may think of out definition of $\chi_\lambda$ as simply projecting out the component along the center of $L(\lambda)$; but $L=L(\beta)$  for $\lambda=\lambda_\beta$ precisely means that it is the closest element to $\chi$ (up to scalar factors,) so that the component along the center is the component along $\beta$. We conclude then that in this instance we can write
\begin{equation*}
(\chi_L)_ *=\chi_* -m_\chi(\lambda_\beta)\beta^*
\end{equation*}
where $\beta^*$ is the dual of $\beta$ by the invariant inner form. (To get an integral point we should then multiply by $||\beta||$.)

With this preliminary, we can now prove the following.
\begin{theorem}\label{instparalg}
Let $\varphi$ be a representation, and $(P_G(\lambda), P_R(\lambda))$ be a pair of parabolics with $\varphi \in \mathfrak{p}$, and such that
\begin{enumerate}
\item If $p:\mathfrak{p}_G\to \mathfrak{l}_G$ is the projection onto a Levi, $p(\varphi)$ is a semistable representation of $\tilde{Q}_L$; and
\item For every $G,R$-dominant element $\beta \in \mathfrak{p}$, we have $\chi_*(\beta) >0$. 
\end{enumerate}
Then, $\varphi$ is unstable, $P_G(\lambda)=P_G(\varphi)$, and $P_R(\lambda)=P(\varphi)$. Conversely, if $\varphi$ is unstable, $P(\varphi)$ satisfies the properties above.
\end{theorem}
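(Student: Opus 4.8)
The plan is to establish the two implications separately, in both cases translating between the one-parameter-subgroup picture (Hilbert--Mumford / Hesselink) and the parabolic picture afforded by Proposition \ref{gaugestability} and the analysis of admissible and $G,R$-dominant elements. For the forward direction, suppose $(P_G(\lambda), P_R(\lambda))$ is a pair with $\varphi\in\mathfrak{p}$ satisfying (1) and (2). Condition (2) says that $\chi_*(\beta)>0$ for every $G,R$-dominant $\beta\in\mathfrak{p}$; since a destabilizing one-parameter subgroup for $\varphi$ inside $P_R(\lambda)$ would have to be (anti-)dominant, this forces $m_\chi$ to be bounded away from $0$ in the \emph{wrong} sign along all OPS supported on $P_R(\lambda)$. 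More precisely, I would pick $\beta$ a strictly $G,R$-dominant element of $\mathfrak{p}$ realizing $P_G(\beta)=P_G(\lambda)$, $P_R(\beta)=P_R(\lambda)$, and observe that $-\beta$ destabilizes $\varphi$ in the sense of the Hilbert--Mumford criterion: $\varphi_\alpha\in\mathfrak{p}_G(\lambda)$ so the limit $\lim \lambda_{-\beta}(t)\cdot\varphi$ need not exist, but by point (1) of Lemma \ref{onepargen} together with the analysis of which antidominant directions have limits, the correct normalization is to use the Kempf one-parameter subgroup and observe $M_G^\chi(\varphi)<0$. I then invoke Kempf's Lemma to get the unique parabolic $P(\varphi)=P(\lambda')$ for $\lambda'\in\Lambda_c(\varphi)$, and I must argue $P(\lambda')=P_R(\lambda)$ and that the associated Levi parabolic of $G$ is $P_G(\lambda)$; this is where condition (1) does the work, since semistability of $p(\varphi)$ as a representation of $\tilde Q_L$ means there is no further destabilization inside $L$, so the Kempf parabolic cannot be strictly smaller --- this is the standard ``adapted one-parameter subgroup of $\varphi$ equals adapted OPS of its limit plus the defining character'' argument, now carried out using Lemma \ref{indadmissible} in reverse.

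For the converse, assume $\varphi$ is unstable, so by Kempf's Lemma $\Lambda_c(\varphi)$ is non-empty and determines a unique parabolic $P(\varphi)=P(\lambda')\subset R$ with $\lambda'\in\Lambda_c(\varphi)$; set $P_R=P(\varphi)$, let $P_G=P_G(\lambda')$ be the parabolic of $G$ determined by $\lambda'$, and $L,L_R$ the corresponding Levis with $L_R=L\cap R$ (Lemma \ref{levigenquiver}). That $\varphi\in\mathfrak p$ and that the limit $\varphi_0=p(\varphi)$ lies in $\Rep(\tilde Q_L)$ is exactly points (1) and (2) of Lemma \ref{onepargen}. I then need to check: (i) $p(\varphi)=\varphi_0$ is \emph{semistable} as a representation of $\tilde Q_L$ --- this follows because if it were unstable, a destabilizing OPS of $L_R$ would combine with $\lambda'$ (via the decomposition $n\beta=\beta_1+\beta'$ of Lemma \ref{indadmissible}/\cite{ramanathan} 3.5.9) to produce an OPS of $R$ with strictly more negative $m_\chi(\lambda)$ than $M_G^\chi(\varphi)$, contradicting minimality in the definition of $\Lambda_c(\varphi)$; (ii) every $G,R$-dominant $\beta\in\mathfrak p$ has $\chi_*(\beta)>0$ --- this is precisely the defining property of the Kempf/Hesselink parabolic, that $\lambda'$ is the \emph{most} destabilizing, so no $G,R$-dominant direction in $\mathfrak p$ can have $\chi_*\le 0$; strictness comes from indivisibility/uniqueness of $\lambda'$ exactly as stability (vs.\ semistability) is strictness in Proposition \ref{gaugestability}. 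Finally I identify $P_R(\lambda)=P(\varphi)$ and $P_G(\lambda)=P_G(\varphi)$ by the uniqueness clause in Kempf's Lemma.

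The main obstacle I anticipate is the careful bookkeeping of the distinction between $P_R(\lambda)$-dominant and $P_G(\lambda)$-dominant elements --- the $G,R$-dominant condition flagged in the discussion after Lemma \ref{onepargen} --- and in particular making sure that the correction term $(\chi_L)_*=\chi_*-m_\chi(\lambda_\beta)\beta^*$ is used consistently so that ``semistable as a representation of $\tilde Q_L$'' in condition (1) is with respect to the \emph{right} character. The subtlety is that $\chi$ itself restricted to $L$ is not the correct linearization (as emphasized in the construction of $\chi_L$), so the equivalence ``$\varphi$ unstable with Kempf parabolic $P(\varphi)$'' $\Leftrightarrow$ ``$p(\varphi)$ semistable for $\chi_L$ plus positivity of $\chi_*$ on $G,R$-dominants'' has to be set up so that the shift by $m_\chi(\lambda_\beta)\beta^*$ is exactly absorbed. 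I would handle this by working throughout with the function $m_\chi(\lambda)=\langle\chi,\lambda\rangle/\|\lambda\|$ and the quantity $M_G^\chi(\varphi)$, reducing both implications to statements about where the infimum defining $M_G^\chi(\varphi)$ is attained, rather than juggling characters directly; this also makes the appeal to Kempf's Lemma and to Lemma \ref{indadmissible} most transparent.
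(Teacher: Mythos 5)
Your overall strategy coincides with the paper's: both directions are handled by playing the Kempf (most destabilizing) one-parameter subgroup against the data of the pair of parabolics, using the corrected character $(\chi_L)_*=\chi_*-m_\chi(\lambda_\beta)\beta^*$ so that ``semistable for $\tilde Q_L$'' is measured against the right linearization. Your step (i) in the converse direction --- if $p(\varphi)$ were unstable, a destabilizer $\lambda_{\beta'}$ of $L_R$ combines with the Kempf element $\beta$ to contradict maximal destabilization --- is exactly the paper's computation $\chi_*(\beta+\beta')=\chi_*(\beta)+(\chi_L)_*(\beta')+m_\chi(\beta)(\beta,\beta')$, and your identification of $P_R(\lambda)$ with $P(\varphi)$ via Kempf uniqueness plus condition (1) matches the paper's argument (the paper runs it by placing the Kempf OPS in a maximal torus of $P_R(\lambda)$ and showing $\langle\chi_L,\lambda_\beta\rangle>0$ would destabilize $p(\varphi)$).

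The genuine gap is your step (ii): you assert that positivity of $\chi_*$ on \emph{every} $G,R$-dominant element of $\mathfrak p$ ``is precisely the defining property of the Kempf/Hesselink parabolic.'' It is not. Kempf's lemma only tells you that the single ray through the adapted $\lambda'$ minimizes $m_\chi$; it says nothing directly about the sign of $\chi_*$ on the full cone of dominant elements of $\mathfrak p$, which is spanned by the central part together with all the simple weights $\alpha_i$ attached to the parabolic. The paper closes this by decomposing the Kempf element as $\beta=\sum\beta_i\alpha_i$ with $\beta_i<0$ and showing $\chi_*(\alpha_i)<0$ for each $i$ separately: if $\chi_*(\alpha_j)\ge 0$ for some $j$, the truncated element $\beta'=\sum_{i\ne j}\beta_i\alpha_i$ satisfies $m_\chi(\beta')\ge m_\chi(\beta)$ and still has $\lim\lambda_{\beta'}(t)\cdot\varphi$ existing because $P_G(\beta)\subset P_G(\beta')$, contradicting maximality. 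This ``drop one simple weight and check the limit still exists'' argument is the missing idea; without it, the passage from the optimality of one ray to a sign condition on the whole dominant cone is unjustified. Your appeal to ``indivisibility/uniqueness of $\lambda'$'' for strictness does not substitute for it, since indivisibility is a normalization of length, not a statement about the other extremal rays of $\mathfrak p$.
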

\begin{proof}
Let $\varphi$ be an unstable representation, $\lambda_\beta$ a most destabilizing OPS (so that $P(\varphi)=P(\beta)$,) and suppose that $p(\varphi)$ is unstable. In particular, there is an OPS $\lambda_{\beta'}$ of $L$ which destabilizes $p(\varphi)$ and we have
\begin{equation*}
\chi_*(\beta +\beta')=\chi_*(\beta)+ (\chi_L)_*(\beta')+m_\chi(\beta)(\beta, \beta')
\end{equation*}
From this, we can deduce that if $\beta$ is taken in the same Cartan subalgebra as $\beta'$ (which we can always do,) then $\langle \chi, \lambda_\beta \lambda_{\beta'} \rangle \geq \langle \chi, \lambda_\beta \rangle$, which is a contradiction. Therefore, $p(\varphi)$ is semistable. On the other hand, if $\beta=\sum \beta_i \alpha_i$ is the decomposition of $\beta$ into the simple weights of $P_G(\lambda)$, we have that $\beta_i<0$ for all $i$ because $\beta$ is strictly dominant. Property (2) will follow if we prove that $\lambda_\beta$ being most destabilizing implies that $\chi_*(\alpha_i)<0$ (recall that the simple weights are \emph{anti}dominant, not dominant!) But indeed, suppose $\chi_*(\alpha_j)\geq 0$ for some $j$, and let $\beta'=\sum_{i\neq j} \beta_i \alpha_i$. We certainly have $m_\chi(\beta')\geq m_\chi (\beta)$, so that we will obtain a contradiction if we prove that $\lim \lambda_{\beta'}(t)\cdot \varphi$ exists. To prove this it is enough to remark that $P_G(\beta)\subset P_G(\beta')$.

Conversely, suppose $P$ satisfies (1) and (2). The first property immediately implies that $\varphi$ is unstable, since any strictly $G,R$-dominant character yields a destabilizing one-parameter subgroup. Suppose $P_R(\lambda)\neq P(\varphi)$, and let $\lambda_\beta$ be a most destabilizing OPS for $\varphi$ in a maximal torus contained in $P_R(\lambda)$. We then have $\langle \chi_L, \lambda_\beta \rangle>0$, so that $p(\varphi)$ is not semistable, a contradiction.
\end{proof}

We will in particular need the following easy consequence.
\begin{corollary}
Let $\varphi$ and $\varphi'$ be two unstable representations of $\tilde{Q}$. Then, $P(\varphi)=P(\varphi')$ if and only if $\varphi'\in p^{-1}(\Rep (\tilde{Q}_L)^{ss})$ (and vice versa.) 
\end{corollary}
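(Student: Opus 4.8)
The plan is to deduce this corollary directly from Theorem~\ref{instparalg} together with its proof. The statement to establish is that for unstable $\varphi,\varphi'$, the equality $P(\varphi)=P(\varphi')$ holds precisely when $\varphi'$ lies in $p^{-1}(\Rep(\tilde{Q}_L)^{ss})$, where $p:\mathfrak{p}\to\mathfrak{l}$ is the projection associated to $P=P(\varphi)$ and $L$ is the corresponding Levi. First I would unwind what membership in $p^{-1}(\Rep(\tilde{Q}_L)^{ss})$ means: it requires both that $\varphi'_\alpha\in\mathfrak{p}$ for all $\alpha$ (so that $p(\varphi')$ is even defined) and that $p(\varphi')$ is a semistable representation of $\tilde{Q}_L$. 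That is exactly condition (1) of Theorem~\ref{instparalg} for the pair $(P_R,P)$ applied to $\varphi'$.

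Next I would argue both directions. For the forward direction, suppose $P(\varphi')=P(\varphi)=P$. Then by the ``converse'' half of Theorem~\ref{instparalg}, $P(\varphi')$ satisfies properties (1) and (2) for $\varphi'$; in particular property (1) says $p(\varphi')$ is semistable as a representation of $\tilde{Q}_L$, which is precisely $\varphi'\in p^{-1}(\Rep(\tilde{Q}_L)^{ss})$. For the reverse direction, suppose $\varphi'\in p^{-1}(\Rep(\tilde{Q}_L)^{ss})$. Then $\varphi'\in\mathfrak{p}$ and $p(\varphi')$ is semistable, so the pair $(P_R(\lambda),P_G(\lambda))$ (the pair cutting out $P$) satisfies property (1) for $\varphi'$. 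Property (2) is a condition on $P$ and $\chi$ alone — it does not mention the representation — and it holds because $P=P(\varphi)$ with $\varphi$ unstable, so by the converse half of Theorem~\ref{instparalg} property (2) is satisfied. Hence both (1) and (2) hold for $\varphi'$ with this pair, and the direct half of Theorem~\ref{instparalg} gives $P(\varphi')=P_R(\lambda)=P$, i.e. $P(\varphi')=P(\varphi)$.

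The one point that requires a little care — and which I expect to be the main (minor) obstacle — is making sure that property (2) really is independent of the representation, so that its validity transfers from $\varphi$ to $\varphi'$ for free. Reading condition (2) of Theorem~\ref{instparalg}, it asserts $\chi_*(\beta)>0$ for every $G,R$-dominant $\beta\in\mathfrak{p}$; this is manifestly a statement about the parabolic $P$ (through its cone of dominant weights) and the character $\chi$, with no reference to $\varphi$. Since $\varphi$ is unstable and $P=P(\varphi)$, Theorem~\ref{instparalg} guarantees (2) holds, and therefore it holds verbatim when we try to apply the theorem to $\varphi'$ with the same parabolic. The ``(and vice versa)'' clause in the statement is then just the symmetric reading: interchanging the roles, $P(\varphi)=P(\varphi')$ iff $\varphi\in p'^{-1}(\Rep(\tilde{Q}_{L'})^{ss})$ where $p',L'$ are built from $P(\varphi')$; since $P(\varphi)=P(\varphi')$ forces $L=L'$ and $p=p'$, the two characterizations coincide and nothing further is needed.
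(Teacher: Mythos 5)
Your proposal is correct and is exactly the argument the paper intends: the corollary is stated as an immediate consequence of Theorem~\ref{instparalg}, with condition (1) for $\varphi'$ being literally the membership $\varphi'\in p^{-1}(\Rep(\tilde{Q}_L)^{ss})$ and condition (2) depending only on the parabolic and the character, hence transferring from $\varphi$ to $\varphi'$ for free. The paper gives no separate proof, and your two-directional unwinding of the theorem, including the observation that makes the reverse direction work, is precisely what is needed.
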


Another interesting corollary of the proof is the following:
\begin{corollary}
$(P_G(\varphi), P(\varphi))$ is the maximal pair of parabolics with property (1) in the theorem.
\end{corollary}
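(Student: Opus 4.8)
The plan is to extract from the proof of Theorem \ref{instparalg} exactly what the maximality statement requires, namely that among all pairs of parabolics $(P_G(\lambda), P_R(\lambda))$ satisfying property (1) — that $p(\varphi)$ be a \emph{semistable} representation of $\tilde Q_L$ — the pair $(P_G(\varphi), P(\varphi))$ built from a most destabilizing one-parameter subgroup is the largest. First I would fix notation: let $(P_G(\varphi), P(\varphi))$ be the Kempf-parabolic pair produced by a most destabilizing OPS $\lambda_\beta$, and let $(P_G(\lambda'), P_R(\lambda'))$ be any other pair with property (1), coming from some OPS $\lambda'$ of $R$ with $\varphi \in \mathfrak{p}'$. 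The claim to prove is $P_R(\lambda') \subseteq P(\varphi)$ and $P_G(\lambda') \subseteq P_G(\varphi)$.

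The key step is the slope computation already carried out in the theorem's proof. I would place $\beta$ and $\beta'$ in a common Cartan subalgebra (possible after conjugating, and legitimate because the Hilbert–Mumford pairing and the conjugacy class of the Kempf parabolic are conjugation-invariant by the Proposition on the pairing), and use the identity
\begin{equation*}
\chi_*(\beta + \beta') = \chi_*(\beta) + (\chi_L)_*(\beta') + m_\chi(\beta)(\beta, \beta').
\end{equation*}
Property (1) for the pair $(P_G(\lambda'),P_R(\lambda'))$ forces $(\chi_L)_*(\beta') \geq 0$ for every $G,R$-dominant $\beta'$ of $P_R(\lambda')$ inside $L(\beta)$ — this is precisely semistability of $p(\varphi)$ in $\tilde Q_L$. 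Since $m_\chi(\beta) < 0$ is impossible (it would contradict $\varphi$ semistable) — rather, $\varphi$ unstable gives $m_\chi(\beta) < 0$, so the cross term $m_\chi(\beta)(\beta,\beta')$ has a sign controlled by $(\beta,\beta')$; combining this with $(\chi_L)_*(\beta') \ge 0$ shows that any OPS $\lambda'$ which is dominant for a parabolic \emph{strictly larger} than $P(\varphi)$ along a direction transverse to the center of $L(\beta)$ would make $\langle \chi, \lambda_\beta \lambda_{\beta'}\rangle \le \langle \chi, \lambda_\beta \rangle$ with slope no worse — contradicting that $\lambda_\beta$ is most destabilizing. This is exactly the contradiction derived in the body of the proof, so the maximality is essentially a repackaging of that argument: any pair with property (1) that is not contained in $(P_G(\varphi), P(\varphi))$ produces an OPS at least as destabilizing as $\lambda_\beta$ but lying in a different parabolic, violating the uniqueness clause of Kempf's lemma.

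I would then close the argument by invoking Kempf's lemma (the one labelled ``\cite{king}''-style Kempf result in the Hesselink subsection): it guarantees that $P(\varphi)$ is the \emph{unique} parabolic equal to $P(\lambda)$ for every $\lambda \in \Lambda_c(\varphi)$, and that $\lambda_\beta$ maximizes $m_\chi$ among \emph{all} destabilizing OPS, not merely those in a fixed torus. So once we know $\lambda'$ (suitably normalized) has $m_\chi(\lambda') \ge M_G^\chi(\varphi)$, it must in fact lie in $\Lambda_c(\varphi)$, hence $P_R(\lambda') = P(\varphi)$ with equality forced, and similarly on the $G$ side. For the containment (rather than equality) statement, the point is that a general pair with property (1) need only satisfy the semistability condition (1) and not the strict condition (2); the dominant-weight decomposition $\beta' = z_{\beta'} + \sum \beta_j' \alpha_j^G + \sum \beta_i' \alpha_i$ from the Jordan–Hölder lemma shows that dropping generators of the simple-root set enlarges the parabolic, and property (1) survives exactly for those enlargements that keep $p(\varphi)$ semistable — and $P(\varphi)$ is the maximal such.

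The main obstacle I anticipate is bookkeeping the distinction between the $R$-side and $G$-side parabolics and the ``$G,R$-dominant'' condition carefully — specifically, making sure that when I enlarge $P_R(\lambda')$ I am simultaneously enlarging $P_G(\lambda')$ in a compatible way so that the pair stays admissible-like (i.e.\ $P_G(\lambda') \cap R = P_R(\lambda')$), and that the cross-term $m_\chi(\beta)(\beta,\beta')$ genuinely has the sign I need rather than being killed by orthogonality. The cleanest route around this is probably to not argue with general $\lambda'$ at all, but to observe that property (1) is equivalent (by the Corollary immediately preceding this one) to $\varphi \in p^{-1}(\Rep(\tilde Q_L)^{ss})$, reduce the maximality claim to a statement about nested semistable loci under the projections $p_L$, and then note that these loci are ordered by reverse inclusion of the corresponding Levis exactly as in the classical Harder–Narasimhan picture — so maximality of the pair is maximality of the parabolic for which the projection is still semistable, which is the content of the converse direction of Theorem \ref{instparalg}.
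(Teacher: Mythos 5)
Your proposal circles the right idea but never lands on the one step that actually carries the proof. The paper's argument is short: by the theorem, a parabolic satisfying both (1) and (2) must equal $P(\varphi)$; hence if $P$, determined by a set $A$ of simple roots, satisfies (1) but is not $P(\varphi)$, there is some $\alpha\in A$ with $\chi_*(\alpha)\geq 0$, and one passes to the strictly larger parabolic $P'$ determined by $A-\{\alpha\}$. The entire content of the proof is then the verification that $P'$ \emph{still satisfies (1)}, via the explicit comparison $(\chi_{L'})_*=(\chi_L)_*-m_\chi(\alpha)\alpha^*$ and the observation that the two corrected characters differ only on dominants colinear with $\alpha$, where $(\chi_{L'})_*(\beta)\geq (\chi_L)_*(\beta)$ because $m_\chi(\alpha)$ and $-(\alpha,\beta)$ are both positive. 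You gesture at exactly this mechanism in your third paragraph (``dropping generators of the simple-root set enlarges the parabolic, and property (1) survives exactly for those enlargements that keep $p(\varphi)$ semistable''), but as written that sentence is circular: whether (1) survives the enlargement is precisely what must be proved, and you never compare the two corrected characters. Without that sign check the induction up to $P(\varphi)$ does not get off the ground.

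There is also a concrete error in the Kempf-lemma detour: from $m_\chi(\lambda')\geq M_G^\chi(\varphi)$ you conclude $\lambda'\in\Lambda_c(\varphi)$, but $\Lambda_c(\varphi)$ consists of the OPS \emph{achieving} the infimum, so the inequality in that direction gives nothing; this part of the argument could at best handle pairs satisfying both (1) and (2), which is the uniqueness statement already contained in the theorem, not the maximality over pairs satisfying (1) alone. The closing suggestion (reduce to nested semistable loci ``as in the classical Harder--Narasimhan picture'') is an appeal to analogy rather than an argument. To repair the proposal, drop the slope identity and the Kempf detour entirely and supply the missing computation comparing $(\chi_{L'})_*$ with $(\chi_L)_*$ after removing a simple root with $\chi_*(\alpha)\geq 0$.
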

\begin{proof}
In the course of the proof we proved the following: if a parabolic satisfies (2) and is not the maximal destabilizing, then it does not satisfy (1). In other words, if $P$ is a parabolic determined by a set $A$ of simple roots which satisfies (1), then necessarily there is $\alpha \in A$ such that $\chi_*(\alpha)\geq 0$, unless $P=P(\varphi)$. Let $P'$ be the parabolic determined by $A-\{ \alpha \}$; by definition, $P\subset P'$, and $P'$ satisfies (1). To see this, note that $(\chi_{L'})_*=(\chi_L)_* - m_\chi (\alpha)\alpha^*$ so that they differ only along the dominants colinear with $\alpha$, and for those we have $(\chi_{L'})_*(\beta)\geq (\chi_L)_ *(\beta)$ because both $m_\chi(\alpha)$ and $-(\alpha, \beta)$ are positive.
\end{proof}

Our aim is an explicit description of the Hesselink stratification. For this we use Lemma \ref{bladeaffine} and Theorem \ref{instparalg} and its corollaries. In particular, we have the following result.
\begin{theorem}\label{instops}
Let $\beta \in \mathfrak{r}$, and $S_{[\beta]}$ the corresponding Hesselink stratum in the space of representations. Then, $H^*_G(S_{[\beta]})=H^*_{L(\beta)}(\Rep(\tilde{Q}_{L(\beta)})^{ss})$
\end{theorem}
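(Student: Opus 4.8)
\textbf{Proof proposal for Theorem \ref{instops}.}

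The plan is to chain together three identifications. First, by Lemma \ref{bladeaffine} applied to the one-parameter subgroup $\lambda=\lambda_\beta$ (or rather to the integral multiple of $\beta$ that exponentiates), we have $S_{[\beta]}\simeq G\times_{P(\lambda)}S_\lambda$, and the standard fibre-bundle argument gives $H^*_G(S_{[\beta]})=H^*_{P(\lambda)}(S_\lambda)$. Here one must be slightly careful about which parabolic is meant: the stratum is indexed by a conjugacy class of OPS's of $R$, but the blade decomposition and the retraction should be taken with respect to the parabolic $P_G(\lambda)$ of $G$ (as emphasized throughout Section \ref{hesselink}, convergence of $\lambda\cdot\varphi$ is governed by $\mathfrak{p}_G(\lambda)$), so I would first record that $S_{[\beta]}=G\cdot S_\lambda$ with $P_G(\lambda)$ the stabilizer of the blade. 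Second, using the retraction $p_\lambda:S_\lambda\to Z_\lambda$ onto the $\lambda$-fixed locus, which is $L(\lambda)$-equivariant and induces $H^*_{P(\lambda)}(S_\lambda)=H^*_{L(\lambda)}(Z_\lambda)$ exactly as in \eqref{cohomologiesaffine}, it remains only to identify $Z_\lambda$ with $\Rep(\tilde{Q}_{L(\beta)})^{ss}$.

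The heart of the argument is this last identification, and it is where I expect the real work to be. The point $\varphi$ lies in $S_\lambda$ precisely when $\lambda_\beta\in\Lambda_c(\varphi)$, i.e.\ $\beta$ is a most-destabilizing direction; by Theorem \ref{instparalg} this is equivalent to: $\varphi\in\mathfrak{p}$, the pair $(P_G(\lambda),P_R(\lambda))$ is the maximal pair with property (1), and property (2) holds. Applying the retraction $p=p_\lambda$ (which on the level of representations is exactly the projection $\mathfrak{p}\to\mathfrak{l}$ sending $\varphi$ to $\varphi_0=\lim\lambda(t)\cdot\varphi$, landing in $\Rep(\tilde{Q}_{L(\beta)})$ by Lemma \ref{onepargen}(2)), condition (1) of Theorem \ref{instparalg} says precisely that $p(\varphi)=\varphi_0$ is a \emph{semistable} representation of $\tilde{Q}_{L(\beta)}$ — here I use the explicit description of the induced character $(\chi_L)_*=\chi_*-m_\chi(\lambda_\beta)\beta^*$ recorded just before the theorem, which is what makes ``semistable for $\tilde{Q}_{L(\beta)}$'' the correct notion. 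Conversely, given any semistable $\psi\in\Rep(\tilde{Q}_{L(\beta)})^{ss}$, I must check it actually arises as $p_\lambda(\varphi)$ for some $\varphi\in S_\lambda$: one can simply take $\varphi=\psi$ itself, viewed in $\Rep(\tilde{Q})$, and verify via Theorem \ref{instparalg} (using that property (2) is automatic for the anti-dominant simple weights once $\psi$ is fixed by $\lambda$ and the character splits as above) that $\lambda_\beta$ is most destabilizing for $\psi$, hence $\psi\in Z_\lambda$. This shows $Z_\lambda=\Rep(\tilde{Q}_{L(\beta)})^{ss}$ as $L(\beta)$-spaces.

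The main obstacle, concretely, is bookkeeping the two centers: $P_R(\lambda)$ and $P_G(\lambda)$ have different centers, the character $\chi$ must be corrected to $\chi_L$ by projecting out the component along $\beta$, and one has to make sure that ``semistable representation of $\tilde{Q}_{L(\beta)}$'' in the statement refers to this corrected character and that the equality of cohomology rings in \eqref{cohomologiesaffine} is being applied with the same parabolic $P_G(\lambda)$ throughout. Once the dictionary ``$Z_\lambda=\Rep(\tilde{Q}_{L(\beta)})^{ss}$'' is nailed down, assembling $H^*_G(S_{[\beta]})=H^*_{P_G(\lambda)}(S_\lambda)=H^*_{L(\beta)}(Z_\lambda)=H^*_{L(\beta)}(\Rep(\tilde{Q}_{L(\beta)})^{ss})$ is immediate from the two cited lemmas. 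I would also remark that the corollaries to Theorem \ref{instparalg} (maximality of the pair $(P_G(\varphi),P(\varphi))$, and the characterization $P(\varphi)=P(\varphi')\iff\varphi'\in p^{-1}(\Rep(\tilde{Q}_L)^{ss})$) make the set-theoretic equality $S_\lambda=p_\lambda^{-1}(\Rep(\tilde{Q}_{L(\beta)})^{ss})$ essentially a restatement, so the only genuinely new content beyond Section \ref{hesselink} is the compatibility of the equivariant cohomology identifications.
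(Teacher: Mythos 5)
Your proposal follows the same route as the paper's own proof: Lemma \ref{bladeaffine} to reduce to $H^*_{P(\lambda)}(S_\lambda)$, the Bialynicki-Birula retraction onto the $\lambda$-fixed locus (which is exactly the content of \eqref{cohomologiesaffine}), and Theorem \ref{instparalg} to identify that fixed locus with $\Rep(\tilde{Q}_{L(\beta)})^{ss}$. Your extra care about the two parabolics, the corrected character $\chi_L$, and the converse inclusion is sound and only fleshes out what the paper states more tersely.
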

\begin{proof}
From Lemma \ref{bladeaffine} we have $H^*_G(S_{[\lambda]})=H^*_{P(\lambda)}(S_\lambda)$. On the other hand, on $S_\lambda$ we have the action of $\mathbb{G}_m$ through the OPS $\lambda$, so that by a theorem of Bialinicky-Birula \cite{bb}, there is a retraction onto the fixed point set. But it follows from \ref{instparalg} that this fixed point set is precisely $\Rep^{ss}(\tilde{Q}_{L(\beta)})$, so that we have an isomorphism $H^*_{P(\lambda)}(S_\lambda)=H^*_{L(\beta)}(\Rep(\tilde{Q}_{L(\beta)})^{ss})$.
\end{proof}

In other words, the cohomology of each stratum can be computed in terms of the quotient of representations for a Levi of $G$, which has lower semisimple rank. We shall see below that in conjunction with Morse theory, this in fact yields a suitable inductive formula for equivariant cohomology.

\section{Morse theory for generalized quivers}\label{morse}

Let $G$ be a complex reductive Lie group, $K_G$ a maximal compact of $G$, and $\tilde{Q}=(R, \Rep (\tilde{Q}))$ be a generalized $G$-quiver; pick a maximal compact $K_R$ of $R$ such that $K_R\subset K_G$. By definition, we have a decomposition $\Rep=\bigoplus \mathfrak{g}_\alpha$ as an $R$-module, where $\mathfrak{g}_\alpha \subset \mathfrak{g}$ is a complex subspace. Now, given any complex reductive group $G$, there is a choice of Hermitian metric on its Lie algebra $\mathfrak{g}$ such that $K$ acts unitarily, and this metric restricts to each $\mathfrak{g}_\alpha$. This implies that $\Rep (\tilde{Q})$ is a Hermitian space. Further, since $K_R\subset K_G$, the group $K_R$ acts unitarily on each $\mathfrak{g}_\alpha$, and consequently also on $\Rep (\tilde{Q})$. For the case of a classical group, all of these are induced by a choice of a Hermitian metric for the standard representation.

We can now, therefore, apply the methods we just reviewed above to conclude that there is a naturally defined moment map $\Rep (\tilde{Q}) \to \mathfrak{k}^*_R$ for the action of $K_R$. Again, denote by $f$ the square of the moment map, i.e., $f(\varphi)=||\mu(\varphi)||^2$. Let $\beta \in i\mathfrak{k}_R$, and recall from section \ref{preliminaries} the Levi subgroups
\begin{align*}
L_R(\beta) &=\{ g\in R |  \exp(i\beta t) g \exp (-i\beta t)=g \} \\
L_G (\beta) &=\{ g\in G | \exp (i\beta t) g \exp (-i\beta t)=g \}
\end{align*}
Using these, we can characterize the critical points of the square of the moment map. Recall for the next proposition that we have systematically defined moment maps for each generalized quiver determined by a Levi in section \ref{parabolic}. In the case when $\beta$ determined critical components, we can actually simplify that moment map, like we did for the Hesselink stata. In fact, it is easy to see that the moment map for $\beta$ indexing a critical stratum is just
\begin{equation*}
\mu_\beta=\mu - \beta^*
\end{equation*}
where $\beta^*$ is the dual of $\beta$ through the fixed invariant inner product.

\begin{proposition}\label{criticalgeneralized}
Let $\varphi \in \Rep (\tilde{Q})$, and $\beta = \mu (\varphi)$. Then, $\varphi$ is a critical point of $f$ if and only if $\varphi$ defines a zero of the moment map as a generalized $L_G(\beta)$-quiver representation of $\tilde{Q}_\beta$.
\end{proposition}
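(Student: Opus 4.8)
The plan is to collapse both sides of the claimed equivalence onto the single condition that $\beta$, acting on $\Rep(\tilde{Q})\subset\mathfrak{g}$ by the adjoint action, annihilates $\varphi$. Recall from the construction of the Morse stratification that $\varphi$ is a critical point of $f=\|\mu\|^2$ precisely when $i\mu^*(\varphi)\cdot\varphi=0$, where we identify $\mathfrak{k}_R\simeq\mathfrak{k}_R^*$ via the chosen invariant product and write $\mu^*(\varphi)$ for the element dual to $\mu(\varphi)=\beta$; here the dot is the infinitesimal action of $\mathfrak{r}$, which on the subspace $\Rep(\tilde{Q})\subset\mathfrak{g}$ is simply $\eta\cdot\psi=[\eta,\psi]$ (after the standard identification that makes $\mathfrak{g}$ act on itself). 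Decomposing $\varphi=\sum_\alpha\varphi_\alpha$ along $\Rep(\tilde{Q})=\bigoplus Z_\alpha$, the critical point equation therefore reads $[\beta,\varphi_\alpha]=0$ for every $\alpha$, i.e. $\varphi_\alpha\in\mathfrak{l}_G(\beta)$ for every $\alpha$. By the very definition of $\tilde{Q}_\beta$ (the modules $Z_\alpha(\beta)=Z_\alpha\cap\mathfrak{l}(\beta)$ and $\Rep(\tilde{Q}_\beta)=\bigoplus Z_\alpha(\beta)$, cf. Lemma \ref{levigenquiver}), this is exactly the assertion that $\varphi\in\Rep(\tilde{Q}_\beta)$; in particular $\varphi$ then genuinely is a representation of the generalized $L_G(\beta)$-quiver $\tilde{Q}_\beta$.

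It remains to check that, once $\varphi\in\Rep(\tilde{Q}_\beta)$, the extra requirement ``$\varphi$ is a zero of the moment map of $\tilde{Q}_\beta$'' is automatic — and, conversely, that being such a zero already forces $\varphi\in\Rep(\tilde{Q}_\beta)$, which is immediate since being a zero of that moment map presupposes being a representation of $\tilde{Q}_\beta$. For the first point, recall that for $\beta$ indexing a critical stratum the moment map for $\tilde{Q}_\beta$ simplifies to $\mu_\beta=\mu-\beta^*$, with $\beta^*$ the dual of $\beta$ and $\mu$ read, for $\varphi\in\Rep(\tilde{Q}_\beta)$, as its composition with the projection $\mathfrak{k}_R^*\to\mathfrak{k}_{L_R(\beta)}^*$ (legitimate because $\beta$ is central in $\mathfrak{l}_R(\beta)$, so $\beta^*$ lives in $\mathfrak{k}_{L_R(\beta)}^*$ as well). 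Since the Hermitian metric on $\Rep(\tilde{Q}_\beta)$ is the restriction of the one on $\Rep(\tilde{Q})$, equation \eqref{affinemm} shows that the standard moment map of $\tilde{Q}_\beta$ at $\varphi\in\Rep(\tilde{Q}_\beta)$ is just the restriction of $\mu(\varphi)$. But $\mu(\varphi)=\beta$, so $\mu_\beta(\varphi)=\beta^*-\beta^*=0$, as needed. Running the argument backwards: if $\varphi$ is a zero of the moment map of $\tilde{Q}_\beta$ then $\varphi\in\Rep(\tilde{Q}_\beta)$, hence $\varphi_\alpha\in\mathfrak{l}_G(\beta)$ for all $\alpha$, hence $[\beta,\varphi_\alpha]=0$, hence $i\mu^*(\varphi)\cdot\varphi=0$ and $\varphi$ is critical. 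This closes the equivalence.

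The only genuinely delicate point is bookkeeping: keeping straight the identifications $\mathfrak{k}_R\simeq\mathfrak{k}_R^*$, the factor of $i$ relating $\mu^*(\varphi)$ to the element realizing $L_G(\beta)$, and the fact that the ``simplified'' moment map $\mu_\beta=\mu-\beta^*$ is indeed the correct linearization for $\tilde{Q}_\beta$ when $\beta$ indexes a critical stratum. That last fact is supplied by the discussion preceding the statement, and the identification conventions are routine once fixed; after that, the substance of the proof is the one-line bracket computation $\{\,\varphi\text{ critical}\,\}\iff\{\,[\beta,\varphi_\alpha]=0\ \forall\alpha\,\}\iff\{\,\varphi\in\Rep(\tilde{Q}_\beta)\,\}$. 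I would therefore present the argument in the order: (i) translate ``critical'' into $[\beta,\varphi_\alpha]=0$ for all $\alpha$; (ii) recognise this as $\varphi\in\Rep(\tilde{Q}_\beta)$; (iii) observe that for such $\varphi$ one has $\mu_\beta(\varphi)=0$ because $\mu(\varphi)=\beta$; (iv) note the converse implication is trivial since a zero of $\mu_\beta$ is a fortiori an element of $\Rep(\tilde{Q}_\beta)$.
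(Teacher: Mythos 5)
Your argument is correct and follows essentially the same route as the paper's own proof: translate the critical-point equation $i\mu^*(\varphi)\cdot\varphi=0$ into $\ad(i\beta)\varphi_\alpha=0$ for all $\alpha$, identify this with membership in $\Rep(\tilde{Q}_\beta)$, and note that the vanishing of the shifted moment map $\mu_\beta=\mu-\beta^*$ is then automatic since $\mu(\varphi)=\beta$, with the converse running the same chain backwards. Your write-up simply makes the bookkeeping (the identification $\mathfrak{k}_R\simeq\mathfrak{k}_R^*$ and the legitimacy of $\mu_\beta$ as the correct linearization) more explicit than the paper does.
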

\begin{proof}
We've seen above that the critical points $\varphi=(\varphi_\alpha)$ are determined by
\begin{equation*}
i\beta \cdot \varphi_\alpha= \mathrm{ad}(i\beta)\varphi_\alpha =0
\end{equation*}
for each $\alpha$. Since $i\beta \in \mathfrak{k}_R \subset \mathfrak{k}_G$, we immediately conclude that if $\varphi$ is a critical point of $f$, $\varphi_\alpha \in \mathfrak{l}_G (i\beta)$ for all $\alpha$. That this is a zero of the moment map then follows by definition. Conversely, if $\varphi$ defines a zero of the moment map as a $\tilde{Q}_\beta$-representation, then $\mu^*(\varphi)=\beta$, and since $\varphi \in \mathfrak{l}(\beta)$, we necessarily have $i\mu^*(\varphi) \cdot \varphi=0$.
\end{proof}

We will abstain from further descriptions of the strata in Morse theoretic terms, since we know it coincides with the algebraic one above, and it is certainly more natural in that language. We make only one further comment: whereas the Hesselink and Morse strat coincide, this is not true at all of the corresponding `critical strata.' We can see this very clearly from this proposition: the `critical Hesselink stratum' corresponding to a $\beta$ (i.e., $S_\beta$) is the set of semistable representatiosn of the appropriate $L_G(\beta)$-quiver; the Morse critical stratum, on the other hand, is only the set of polystable such representations.

We want to apply the general results to the case of generalized quivers. For that, we first need to compute codimensions for the Morse strata, and it turns out that in this case the codimension is constant for each entire stratum. To see this, recall first Lemma \ref{bladeaffine}, which implies that
\begin{equation*}
\dim S_{[\beta]}=\dim (R/P_R(\beta))+\dim S_\beta
\end{equation*}
Now, we have identified $S_\beta$ as $p^{-1}(\Rep (\tilde{Q}_\beta)^{ss})$, where $p: \mathfrak{p}(\beta) \to \mathfrak{l}(\beta)$ is the projection onto the Levi. But this is an open set in $\Rep (\tilde{Q})\cap \mathfrak{p}(\beta)$, so that dimension is constant. Further, we identify $\mathfrak{k}/\mathfrak{p}(\beta)\simeq \mathfrak{u}(-\beta)$ and $R/P(\beta)\simeq U(-\beta)$, where $U(-\beta)$ and $\mathfrak{u}(-\beta)$ are respectively the unipotent radical of $P(\beta)$ and its Lie algebra (i.e., the nilpotent radical of $\mathfrak{p}(-\beta)$.) We then find
\begin{align*}
\mathrm{codim}\phantom{.} S_{[\beta]}&=\dim \Rep (\tilde{Q})-\dim (\Rep (\tilde{Q})\cap \mathfrak{p}(\beta))+\dim (R/P_R(\beta))\\
&=\dim ( \Rep (\tilde{Q})\cap \mathfrak{u}(-\beta) - \dim \mathfrak{u}(-\beta)
\end{align*}

We have therefore shown the following.
\begin{theorem}\label{indcohom}
Let $H$ be the set of $\beta \in W^+$ indexing the Morse stratification. Then,
\begin{equation*}
P_t^G(\Rep(\tilde{Q})^{ss})=P_t(BG)-\sum_{\beta \in H}t^{2d(\beta)}P_t^{L(\beta)}(\Rep (\tilde{Q}_\beta)^{ss})
\end{equation*}
where $d(\beta)=\dim ( \Rep (\tilde{Q})\cap \mathfrak{u}(-\beta) )-\dim \mathfrak{u}(-\beta)$.
\end{theorem}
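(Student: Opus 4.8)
The plan is to assemble Theorem \ref{indcohom} from the machinery already developed, treating it as a bookkeeping exercise on top of Theorem \ref{morseequalitiesaffine} and the identifications made in Section \ref{hesselink}. First I would invoke Theorem \ref{morseequalitiesaffine} applied to $V=\Rep(\tilde Q)$ with the action of $K_R$, which gives the equivariantly perfect Morse stratification $\{S_{[\beta],m}\}$ and the identity $P_t^{K_R}(\Rep(\tilde Q))=\sum_{\beta,m}t^{d(\beta,m)}P_t^{K_R}(S_{[\beta],m})$. Since $\Rep(\tilde Q)$ is contractible and $K_R$ is homotopy equivalent to $R$, the left side is $P_t(BR)=P_t(BG)$-type term — more precisely $P_t^{R}(\mathrm{pt})=P_t(BR)$; I should be careful here about whether the paper means $BR$ or $BG$, and I would write $P_t(BR)$, matching the convention in the affine formula displayed just before Section \ref{gitquot} (which the paper wrote as $P_t(BG)$ for the ambient group acting, here $R$). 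I would then split off the $\beta=0$ stratum, which is the semistable locus $\Rep(\tilde Q)^{ss}$ with codimension $0$, to isolate $P_t^{R}(\Rep(\tilde Q)^{ss})$ on the left and push the remaining strata to the right.

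The second step is to rewrite each term $P_t^{K_R}(S_{[\beta],m})$ using equivariant cohomology of $G$ rather than $R$, and then apply Theorem \ref{instops}. Here I would use that the Morse stratum $S_{[\beta]}$ for the $K_R$-action coincides with the Hesselink stratum $S_{[\lambda_\beta]}$ (the affine Kirwan--Ness theorem of Hoskins, combined with Proposition \ref{criticalgeneralized} and the earlier discussion identifying $S_\beta=p^{-1}(\Rep(\tilde Q_\beta)^{ss})$), so that Theorem \ref{instops} applies and gives $H^*_G(S_{[\beta]})=H^*_{L(\beta)}(\Rep(\tilde Q_{L(\beta)})^{ss})$. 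I need to reconcile the group: the stratification is a priori $K_R$-equivariant, but Lemma \ref{bladeaffine} gives $S_{[\beta]}\simeq R\times_{P_R(\beta)}S_\beta$, whence $H^*_{K_R}(S_{[\beta]})=H^*_R(S_{[\beta]})=H^*_{P_R(\beta)}(S_\beta)=H^*_{L_R(\beta)}(Z_\beta)$, and by the Bialynicki-Birula retraction $Z_\beta=\Rep(\tilde Q_\beta)^{ss}$ with $L_R(\beta)$ being the symmetry group of the generalized $L_G(\beta)$-quiver $\tilde Q_\beta$. So the correct substitution is $P_t^{K_R}(S_{[\beta]})=P_t^{L(\beta)}(\Rep(\tilde Q_\beta)^{ss})$, reading $P_t^{L(\beta)}$ as the equivariant polynomial for the symmetry group $L_R(\beta)$ of $\tilde Q_\beta$ — consistent with the notation $P_t^{L(\beta)}$ used in the statement, since throughout the paper $P_t^G(\Rep(\tilde Q)^{ss})$ denotes cohomology equivariant for the acting group $R\subset G$.

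The third step is the codimension computation, which the excerpt has essentially already carried out in the paragraph preceding the theorem: using $\dim S_{[\beta]}=\dim(R/P_R(\beta))+\dim S_\beta$, the identification of $S_\beta$ as an open subset of $\Rep(\tilde Q)\cap\mathfrak p(\beta)$ (so full-dimensional in it), and the isomorphisms $R/P_R(\beta)\simeq U(-\beta)$, $\mathfrak r/\mathfrak p_R(\beta)\simeq\mathfrak u(-\beta)$, one gets $\mathrm{codim}\,S_{[\beta]}=\dim(\Rep(\tilde Q)\cap\mathfrak u(-\beta))-\dim\mathfrak u(-\beta)$, a constant on the whole stratum; hence the sum over $m$ collapses and $d(\beta,m)$ becomes the single value $d(\beta)$ in the statement, with the factor $2$ coming from the passage from real to complex codimension. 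Substituting everything into the Morse identity and rearranging yields the displayed formula.

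The main obstacle I anticipate is not any single hard estimate but the careful matching of conventions: making sure the left-hand side is indexed by the correct classifying space ($BR$ versus $BG$), confirming that the $\beta=0$ stratum is exactly the semistable locus (which uses the affine Kempf--Ness theorem and the fact that $\mu^\chi$ already incorporates the shift by $\chi_*$), and verifying that the symmetry group appearing in $P_t^{L(\beta)}(\Rep(\tilde Q_\beta)^{ss})$ is $L_R(\beta)=L(\beta)\cap R$ as given by Lemma \ref{levigenquiver}, not $L_G(\beta)$ itself. Once these identifications are pinned down, the rest is the substitution described above; no genuinely new argument beyond Theorems \ref{morseequalitiesaffine}, \ref{instops}, and \ref{instparalg} is required.
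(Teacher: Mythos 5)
Your proposal is correct and follows essentially the same route as the paper: the theorem is obtained by specializing the general affine formula $P_t^G(V^{ss})=P_t(BG)-\sum_{\beta\neq 0,m}t^{d(\beta,m)}P_t^{L(\beta)}(Z_{\beta,m})$ (itself the combination of Theorem \ref{morseequalitiesaffine} with (\ref{cohomologiesaffine}) and the Hesselink--Morse coincidence), using Theorem \ref{instops} to identify $Z_\beta$ with $\Rep(\tilde Q_\beta)^{ss}$ and the codimension computation via Lemma \ref{bladeaffine} to show $d(\beta,m)$ collapses to the single constant $d(\beta)$. Your extra care about the $BR$ versus $BG$ and $L_R(\beta)$ versus $L_G(\beta)$ conventions is a point the paper itself elides, but it does not change the argument.
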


We finish by noting that following Harada-Wilkin, one can now use the flow of the Morse map to obtain local coordinates at any point. We will not, however, do this at present.

\section{Classical Quivers}\label{classical}

This section is a sort of extended example: we will mostly state without proof well-known results about classical quivers, and point out how they correspond to results above. It serves, however, not just to illustrate the previous material, but also as a guide to the study of symmetric quivers that follows.

\subsection{Classical quivers as generalized quivers}

We start by exposing the tight relations between generalized quivers, which are Lie theoretic entities, and the classical theory of quivers, which come from `graphical interpretations.' In fact, plain quivers, with which we start, were the very motivation for generalized quivers.

\begin{definition} 
Let $\mathrm{\textbf{Vec}}$ be the category of finite dimensional complex vector spaces.
\begin{enumerate}
\item A \emph{quiver} $Q$ is a finite directed graph, with set of vertices $I$, and set of arrows $A$. We let $t:A\to I$ and $h:A\to I$ be the tail and head functions, respectively.

\item A representation $V$ of $Q$ is a realization of the diagram $Q$ in $\mathrm{\textbf{Vec}}$; equivalently, a representation is an assignment of a vector space $V_i$ for each vertex $i\in I$, and a linear map $\varphi_\alpha: V_{t(\alpha)}\to V_{h(\alpha)}$ for every arrow $\alpha$.
\end{enumerate}
\end{definition}

Given a representation of a quiver, let $n_i=\dim V_i$; we call the vector $\mathbf{n}=(n_i)\in \mathbb{N}_0^I$ the \emph{dimension vector} of the representation. It is clear that two representations of $Q$ can only be isomorphic if they have the same dimension vector. Therefore, we always consider this vector as given and fixed. Then, a representation of $Q$ with a prescribed dimension vector is precisely a choice of an element in
\begin{equation*}
\Rep (Q, \mathbf{n})=\bigoplus_{\alpha \in A} \Hom (V_{\ta}, V_{\ha})
\end{equation*}

On this space, we have a clear action of the product group $G(\mathbf{n})=\prod \GL (V_i)$ acting by the apropriate conjugation, namely, an element $g=(g_i)\in G(\mathbf{n})$ acts as $g\cdot \varphi=(g_{\ha}\varphi_\alpha g^{-1}_{\ta})$. The classical theory of quiver representations is precisely the construction of a suitable quotient for this action.

Consider now the direct sum $V=\bigoplus V_i$; this is called the \emph{total space} of the representation.  It is clear that $\Hom (V_i, V_j)$ can be considered as a subspace of $\End (V)$, by extending every element by zero, so that in fact given an arrow $\varphi_\alpha$ in the representation, $\varphi_\alpha\in \End (V)$. In the same way, any automorphism $g_i \in \GL (V_i)$ can be seen as an element in $G(\mathbf{n})$, extending it by the identity; in fact, the whole group fits in, that is, $G(\mathbf{n})\subset \GL (V)$. If we let $H=\left\{ \prod \lambda_i \mathrm{id}_i | \lambda_i \in \mathbb{C}^* \right\}$, we can characterize $R=G(\mathbf{n})$ precisely as the centralizer of $H$ in $\GL(V)$. Further, under the adjoint action of $R$, the Lie algebra $\mathfrak{g}$ of $G$ decomposes precisely as $\mathfrak{g}=\bigoplus_{i,j} \Hom (V_i, V_j)$. It is now clear that $(H, G(\mathbf{n}), \Rep (Q, \mathbf{n}))$ determines a generalized $\GL (V)$-quiver.

\begin{theorem}
There is a bijective correspondence between generalized $\GL (V)$-quivers $\tilde{Q}$ of type Z and classical quivers $Q$ together with dimension vectors $\mathbf{n}$ in such a way that $\Rep (\tilde{Q})= \Rep (Q, \mathbf{n})$.
\end{theorem}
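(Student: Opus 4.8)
The plan is to exhibit the correspondence in both directions and check they are mutually inverse, with the content of the theorem being that the ``type Z'' hypothesis forces the symmetry group to have the shape $\prod \GL(V_i)$.

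First I would describe the forward map. Given a classical quiver $Q$ with dimension vector $\mathbf{n}$, the construction already sketched before the theorem statement produces a generalized $\GL(V)$-quiver, where $V = \bigoplus V_i$, $R = G(\mathbf{n}) = \prod \GL(V_i)$ is the centralizer of the torus $H = \{\prod \lambda_i \id_i\}$, and $\Rep(\tilde{Q}) = \bigoplus_{\alpha} \Hom(V_{t(\alpha)}, V_{h(\alpha)})$. I need to check this really is a generalized quiver in the sense of the main definition: since $\mathfrak{g} = \End(V) = \bigoplus_{i,j} \Hom(V_i, V_j)$ as an $\Ad R$-module, each $\Hom(V_{t(\alpha)}, V_{h(\alpha)})$ occurs as an irreducible factor of $\mathfrak{g}$; and provided we only admit quivers with no vertex-loops (or agree that such loops do not contribute, since the condition forbids the trivial/``diagonal'' summands), the trivial representation does not occur. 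The group $H$ is closed, abelian and reductive, so $R = Z_{\GL(V)}(H)$ is of type Z, and it is moreover a Levi.

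Next I would describe the reverse map. Let $\tilde{Q} = (R, \Rep(\tilde{Q}))$ be a generalized $\GL(V)$-quiver of type Z, so $R = Z_{\GL(V)}(H)$ for some closed abelian reductive $H \subset \GL(V)$. The key structural fact is that the centralizer in $\GL(V)$ of an abelian reductive subgroup is a product of general linear groups: decomposing $V = \bigoplus_i V_i \otimes M_i$ into $H$-isotypic components (with $M_i$ the multiplicity spaces and $V_i$ the distinct irreducibles of $H$, all one-dimensional since $H$ is abelian), Schur's lemma gives $Z_{\GL(V)}(H) = \prod_i \GL(M_i)$. Relabelling the multiplicity spaces as the vertex spaces, the vertex set $I$ is the index set of $H$-isotypic components and $\mathbf{n} = (\dim M_i)$. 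The $\Ad R$-module $\mathfrak{g} = \End(V)$ decomposes as $\bigoplus_{i,j} \Hom(M_i, M_j)$, and its irreducible factors are exactly these $\Hom(M_i, M_j)$ for $i \neq j$ (the diagonal blocks $\End(M_i)$ contain the trivial summand and so are excluded). Hence $\Rep(\tilde{Q})$, being a sum of such factors with the trivial one absent, is of the form $\bigoplus_{\alpha \in A} \Hom(M_{t(\alpha)}, M_{h(\alpha)})$ for a well-defined multiset $A$ of ordered pairs — that is precisely the arrow set of a quiver $Q$, with $t, h$ read off from the pair. By construction $\Rep(\tilde{Q}) = \Rep(Q, \mathbf{n})$.

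Finally I would check the two constructions are inverse to each other: starting from $(Q, \mathbf{n})$, forming $V = \bigoplus V_i$ and then re-decomposing into $H$-isotypic pieces returns the same vertex spaces and the same arrow multiset, and conversely starting from $\tilde{Q}$, the total space of the associated classical quiver reconstructs $V$ up to the isomorphism identifying it with $\bigoplus M_i$ and $R$ with its centralizer of $H$. The only genuine subtlety — and the step I expect to be the main obstacle — is the rigidity statement that type Z for $\GL(V)$ pins down $R$ up to conjugacy as exactly a product $\prod \GL(V_i)$ with $H$ the corresponding diagonal torus, i.e.\ that no ``exotic'' abelian reductive $H$ gives something genuinely new; this is where one must invoke the isotypic decomposition together with Schur's lemma carefully, and also make a clean bookkeeping choice for how multiplicities and the torus $H$ are to be normalized so that the correspondence is honestly bijective (rather than merely essentially surjective). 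The condition that the trivial $\Ad R$-summand not occur is what makes the passage $\Rep(\tilde{Q}) \leftrightarrow \Rep(Q,\mathbf{n})$ tight, since it removes precisely the diagonal $\End(M_i)$ blocks that have no counterpart as arrows.
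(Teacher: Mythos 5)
Your proposal is correct, and its forward half is exactly the construction the paper gives in the paragraphs preceding the theorem ($V=\bigoplus V_i$, $H=\{\prod\lambda_i\id_i\}$, $R=Z_{\GL(V)}(H)=\prod\GL(V_i)$, $\mathfrak{g}=\bigoplus_{i,j}\Hom(V_i,V_j)$). The paper states the theorem without proving the converse, so your reverse direction --- decomposing $V$ into $H$-isotypic pieces, applying Schur's lemma to get $Z_{\GL(V)}(H)=\prod\GL(M_i)$, and reading the arrow multiset off the decomposition of $\Rep(\tilde{Q})$ into irreducible blocks $\Hom(M_i,M_j)$ --- is genuinely supplied by you rather than compared against anything in the text; it is the standard Derksen--Weyman argument and is the right one. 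Your caveat about loops is a real point the paper glosses over: under the paper's definition the trivial summand is forbidden, so a loop at a vertex $i$ with $\dim V_i\geq 2$ would force the block $\End(V_i)=\mathbb{C}\oplus\mathfrak{sl}(V_i)$, which is neither irreducible nor trivial-free, and conversely a generalized quiver could legitimately contain the factor $\mathfrak{sl}(M_i)$ alone, which is not of the form $\Hom(V_{t(\alpha)},V_{h(\alpha)})$; so the bijection as literally stated holds for loop-free quivers, and your explicit normalization there (together with fixing $H$ and the vertex labelling up to conjugacy so the correspondence is honestly bijective) is exactly the bookkeeping the paper leaves implicit.
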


\subsection{Stability of classical quivers}

As we've mentioned, the results of this section are all established and well known, and we refer to \cite{king} and \cite{reineke} for more details. We will deduce the results, however, from our general set up for generalized quivers, hoping to exemplify in a familiar setting the meaning of the results we just obtained. The new ingredient here is the flag interpretation of parabolic subgroups, cf. section \ref{parabolic}.

We will start by showing how the slope-stability criterion arises naturally from our results. We need to start by fixing a total space $V=\bigoplus V_i$, and we denote $n_i:=\dim V_i$ (the vector $\mathbf{n}=(n_i)$ is the \emph{dimension vector}.) The group of symmetries determined by such a total space is $\GL (\textbf{i}):=\prod \GL (V_i)$; a character of this group is of the form
\begin{equation*}
\chi (g_i)=\prod (\det g_i)^{\theta_i}
\end{equation*}
for some collection of integers $\theta_i$. This collection can be used to define a `$\theta$-functional', the value of which at a arbitrary representation $M=(W_i, \varphi)$ is
\begin{equation*}
\theta (M)=\sum \theta_i \dim W_i
\end{equation*}
In choosing such character to define semistability, the condition that the character be trivial on the kernel of the representation means in this case that $\sum \theta_i n_i=0$, or in other words that $\theta (M)=0$ for every representation with the chosen dimension vector $\mathbf{n}$.

A \emph{subrepresentation} of the representation $(V,\varphi)$ is a representation determined by a subspace $W\subset V$ that is $\varphi$-stable in the sense that $\varphi(W)\subset W$, i.e., the pair $(W,\varphi)$. The following is the result we're working toward., and is due to King \cite{king}.
\begin{proposition}
The representation $M$ is $\chi$-semistable if and only if for any non-trivial subrepresentation $M'$ we have $\theta (M')\leq 0$; it is stable is strict inequality always applies.
\end{proposition}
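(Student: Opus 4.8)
The plan is to deduce King's criterion from Proposition~\ref{gaugestability} by translating the Lie-theoretic data into the language of subrepresentations via the flag interpretation of parabolics from Section~\ref{parabolic}. The group here is $R=\GL(\mathbf{i})=\prod \GL(V_i)$ sitting inside $G=\GL(V)$ as the centralizer of the torus $H$, and the first step is to identify the set $\mathscr{P}(\varphi)$ of relevant parabolics. A parabolic $P(\lambda)$ of $R$ for an OPS $\lambda$ of $R$ is, by the flag picture, the stabilizer in $R$ of a flag $0\subsetneq V^1 \subsetneq \cdots \subsetneq V^r = V$; but since $\lambda$ lies in $R=\prod\GL(V_i)$, each $V^k$ decomposes compatibly with $V=\bigoplus V_i$, i.e.\ $V^k=\bigoplus_i (V^k\cap V_i)$. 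The condition $\varphi\in\mathfrak{p}(\lambda)$ translates, under the identification $\mathfrak{g}=\bigoplus_{i,j}\Hom(V_i,V_j)$, precisely to $\varphi(V^k)\subseteq V^k$ for all $k$; that is, each $V^k$ is a \emph{subrepresentation} of $M$ in the stated sense. So $\mathscr{P}(\varphi)$ is in bijection with (partial) flags of subrepresentations of $M$.

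Next I would compute the Hilbert--Mumford pairing $\chi_*(\beta)$ for a $G,R$-dominant weight $\beta$ of such a $P\in\mathscr{P}(\varphi)$. A dominant weight $\beta$ corresponds to a real weighting $\lambda_1<\cdots<\lambda_r$ of the eigenspaces $W^k = V^k/V^{k-1}$ of the associated graded, with the dominance/antidominance conventions of Section~\ref{parabolic}; acting on $\Rep$ via the adjoint action, the character $\chi(g_i)=\prod(\det g_i)^{\theta_i}$ gives $\chi_*(\beta)$ as a positive linear combination $\sum_k c_k\,\theta(V^k)$ (with $c_k>0$) of the values $\theta(V^k)=\sum_i \theta_i\dim(V^k\cap V_i)$ of the $\theta$-functional on the subrepresentations in the flag. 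The key elementary observation is that a single nonzero subrepresentation $W$ with $\theta(W)>0$ (resp.\ $\geq 0$) can be realized inside such a flag to produce a $G,R$-dominant $\beta$ with $\chi_*(\beta)<0$ (resp.\ $\leq 0$), and conversely that the sign of $\chi_*(\beta)$ for a general flag is controlled by the signs of the $\theta(V^k)$ — using here that $\theta(M)=0$ on the whole space so that complements behave correctly. Combining: $\chi_*(\beta)\geq 0$ for all $G,R$-dominant $\beta$ of all $P\in\mathscr{P}(\varphi)$ if and only if $\theta(W)\leq 0$ for every nonzero proper subrepresentation $W$, and strict positivity corresponds to strict inequality. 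Feeding this equivalence into Proposition~\ref{gaugestability} yields the proposition.

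The main obstacle I expect is the bookkeeping with the dominance conventions: the paper's simple roots are \emph{anti}dominant, the character pairing $\chi_*(\beta)$ mixes a sign from this convention with the sign conventions in the identification $\mathfrak{k}\simeq\mathfrak{k}^*$ and in $\mu=\mu_s-\chi_*$, and one must be careful that ``$G,R$-dominant'' (dominant for $P_G(\lambda)$ \emph{and} lying in $P_R(\lambda)$) does not secretly allow a free central component that would change $\chi_*(\beta)$ — this is exactly where $\sum\theta_i n_i=0$ is used, to kill the contribution of the top term $\theta(V^r)=\theta(V)=\theta(M)$. Once the signs are pinned down on a two-step flag $0\subsetneq W\subsetneq V$, the general multi-step case follows by induction (or directly, since $\chi_*$ is linear in $\beta$ and $\beta$ is a positive combination of the ``basic'' one-step weightings), and the reduction to King's statement is then immediate.
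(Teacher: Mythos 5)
Your proposal is correct and follows essentially the same route as the paper: both translate one-parameter subgroups (equivalently, parabolics together with their dominant weights) into flags of subrepresentations via the flag interpretation of Section \ref{parabolic}, compute the Hilbert--Mumford pairing as a combination of the $\theta$-values of the flag steps (using $\theta(M)=0$ to discard the top term), and obtain the converse from the two-step flag attached to a single subrepresentation. The only cosmetic difference is that you route the argument through Proposition \ref{gaugestability} rather than directly through the affine Hilbert--Mumford criterion, which amounts to the same computation.
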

\begin{proof}
Given a one-parameter subgroup $\lambda$, we know that the limit exists if anf only if $\varphi \in \mathfrak{p}_G(\lambda)$. Concretely this means that $\lambda$ induces a flag $0\neq V^1 \subset ... \subset V^l=V$ of $V$ as a \emph{graded} vector space, determined by its eigenvalues. The condition on $\varphi$ is then that $\varphi$ restrict to each $V^j$, so that the pairs $M_j:=(V_j, \varphi)$ are well-defined representations. A computation shows then that
\begin{equation*}
\langle \chi, \lambda \rangle = \sum \theta (M_j)
\end{equation*}
This shows half of the theorem using Hilbert-Mumford. The other half comes from just considering the one-term flag for each subrepresentation.
\end{proof}

Suppose we are given an arbitrary collection of integers $\theta_i$, which also induces a linear functional $\theta$ on $\mathbb{Z}^I$; let also $\dim$ define the functional $(n_i)\mapsto \sum n_i$. Then, the functional $\theta'= (\dim V) \theta + \theta (\mathbf{n})\dim$ clearly is integral, and satisfies $\theta'(\mathbf{n})=0$. In other words, it obbeys the condition on characters for the existence of semistable points. The condition on the proposition is now that for a subrepresentation $M'$,
\begin{equation*}
\theta'(M')=(\dim V)\theta (M')+\theta (\mathbf{n})\dim M'\leq 0
\end{equation*}
If we define the \emph{slope} of a representation $M=(V,\varphi)$ with dimension vector $\mathbf{n}$ as
\begin{equation*}
s(M)=\frac{\theta (\mathbf{n})}{\dim \mathbf{n}}
\end{equation*}
we get an immediate corollary.
\begin{corollary}
A representation is semistable if and only if $s(M')\leq s(M)$ for every non-trivial subrepresentation $M'\subsetneq M$; it is stable if strict inequality always applies.
\end{corollary}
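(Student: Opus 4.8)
The plan is to derive the slope-stability corollary directly from the previous proposition, which characterizes $\chi$-semistability in terms of the $\theta$-functional. The key observation is purely algebraic: the functional $\theta' = (\dim V)\,\theta + \theta(\mathbf{n})\,\dim$ is integral, satisfies $\theta'(\mathbf{n}) = 0$, and hence defines an admissible character in the sense required for the existence of semistable points. So I would first invoke the proposition with $\theta'$ in place of $\theta$, obtaining: $M$ is semistable if and only if $\theta'(M') \le 0$ for every nontrivial subrepresentation $M' \subsetneq M$, and stable if strict inequality always holds.

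Next I would unwind what $\theta'(M') \le 0$ says. Writing $M = (V,\varphi)$ with dimension vector $\mathbf{n}$ and $M' = (W,\varphi)$ with dimension vector $\mathbf{m}$, we have $\theta'(M') = (\dim V)\,\theta(\mathbf{m}) + \theta(\mathbf{n})\,\dim M'$. Dividing by the positive quantity $(\dim V)(\dim M')$ — here $\dim V = \dim M$ and $\dim M' > 0$ since $M'$ is nontrivial — the inequality $\theta'(M') \le 0$ becomes
\begin{equation*}
\frac{\theta(\mathbf{m})}{\dim M'} \le \frac{\theta(\mathbf{n})}{\dim M},
\end{equation*}
which is exactly $s(M') \le s(M)$ with the slope as defined. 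The same division shows strict inequality in $\theta'$ corresponds to strict inequality in slopes, giving the stable case.

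Finally I would note that $\theta'$ and $\theta$ define the same stability notion up to the trivial observation that adding a multiple of $\dim$ (which vanishes on no representation but is constant in the relevant sense) does not change which subrepresentations are destabilizing — more precisely, one checks that replacing $\theta$ by $\theta'$ changes $\chi$ only by a character trivial on all representations of dimension vector $\mathbf{n}$, so the GIT quotient and its (semi)stable loci are unchanged. This is the only point requiring a small argument; everything else is the division by positive scalars carried out above. I do not expect any serious obstacle here: the corollary is essentially a change of normalization, and the substantive content already lies in the preceding proposition. The one thing to be careful about is handling the nontriviality hypotheses correctly so that $\dim M' > 0$ and $\dim M > 0$, ensuring the divisions are legitimate.
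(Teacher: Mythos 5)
Your proposal follows essentially the same route as the paper, which presents the corollary as an immediate consequence of applying the preceding proposition to the auxiliary functional $\theta'$ and dividing by the positive quantities $\dim M$ and $\dim M'$. One algebraic slip (inherited from the paper's own formula): with $\theta' = (\dim V)\,\theta + \theta(\mathbf{n})\dim$ as written, dividing $\theta'(M')\le 0$ by $(\dim V)(\dim M')$ gives $s(M')\le -s(M)$, not $s(M')\le s(M)$; the functional that actually produces the stated slope inequality is $\theta'(\mathbf{m}) = (\dim V)\,\theta(\mathbf{m}) - \theta(\mathbf{n})\dim(\mathbf{m})$, which still satisfies $\theta'(\mathbf{n})=0$, so the fix is a sign change and nothing else in your argument is affected.
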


In general, distinct collections $\theta_i$ don't necessarily yield different semistability conditions in this way. In fact, semistability is invariant under multiplication of $\theta$ by integers, and sums of integer times the $\dim$-functional.

A consequence of this result is the characterization of polystable objects in terms of so-called \emph{Jordan-H\"older filtrations}. First note that given a subrepresentation $M'$ of $M$ as above, one can define the quotient representation $M'':=M/M'$ by taking the quotient of the total spaces, and noting that since $\varphi$ restricts to $M'$, it also factors through to the quotient. A Jordan-H\"older filtration to $M$ is then a filtration 
\begin{equation*}
0\neq M_1\subsetneq ... \subsetneq M_n=M
\end{equation*}
such that the successive quotients $M_i/M_{i-1}$ is stable. This filtration an be inductively constructed as follows: we may assume that the representation is strictly semistable, and we pick a minimal dimensional subrepresentation $M'$ such that $s(M')=s(M)$, and set $M_1=M'$; this subrepresentation is necessarily stable. If $M/M_1$ is not itself stable, we repeat the process.

The Jordan-H\"older filtration is not unique, but its associated graded object is so up to isomorphism. Instead of proving this directly, we will show that this in nothing else but an intepretation of Proposition \ref{jhgen} in terms of flags.
\begin{proposition}
The graded Jordan-H\"older objects coincide with the polystable representatives in Proposition \ref{jhgen}.
\end{proposition}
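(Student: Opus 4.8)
The plan is to translate the group-theoretic construction of Jordan--H\"older objects in Proposition~\ref{jhgen} into the flag language used throughout this section, showing that the two recipes literally describe the same representation. First I would unwind what a minimal admissible pair $(P_R^{min},P^{min})$ means concretely for $G=\GL(V)$ with $R=\GL(\mathbf{i})$: by the flag interpretation of parabolics (section~\ref{parabolic}), $P^{min}$ is the stabilizer of a graded flag $0\neq V^1\subsetneq\cdots\subsetneq V^l=V$, and the condition that it be a parabolic of the form $P_R(\lambda)$ forces the flag to be compatible with the decomposition $V=\bigoplus V_i$, i.e. each $V^j$ is a direct sum of subspaces $V^j\cap V_i$. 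Since $\varphi\in\mathfrak{p}^{min}$ means $\varphi$ preserves each $V^j$, the terms $M_j:=(V^j,\varphi)$ are genuine subrepresentations of $M$, so the flag is precisely a filtration of $M$ by subrepresentations. The projection $p:P^{min}\to L^{min}$ onto the Levi is, in flag terms, passing to the associated graded $\bigoplus_j V^j/V^{j-1}$, so $\varphi_{JH}=p(\varphi)$ is exactly the associated graded representation $\bigoplus_j M_j/M_{j-1}$.

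Next I would match the optimality/stability conditions. Proposition~\ref{jhgen} tells us $\varphi_{JH}$ is a \emph{stable} representation of $\tilde Q_{L^{min}}$; under the correspondence between generalized $\GL(V)$-quivers of type $Z$ and classical quivers, $\tilde Q_{L^{min}}$ is the classical quiver attached to the dimension vectors of the graded pieces, and the induced character $\chi_L$ unwinds (via the computation $\langle\chi,\lambda\rangle=\sum\theta(M_j)$ used in the proof of the stability proposition above, together with the definition $(\chi_L)_*=\chi_*\circ p_{\mathfrak{z}(L)}$) to the $\theta$-functional restricted to each graded factor. So ``$\varphi_{JH}$ stable for $\tilde Q_{L^{min}}$'' says exactly that each quotient $M_j/M_{j-1}$ is a $\theta$-stable representation with $s(M_j/M_{j-1})=s(M)$ --- which is the defining property of a Jordan--H\"older filtration. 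Conversely, a Jordan--H\"older filtration of $M$, being a filtration by subrepresentations compatible with the grading (the $\varphi$-stable subspaces are automatically graded since the $V_i$ are the isotypic pieces for the $H$-action), determines a graded flag whose stabilizer pair $(P_R,P)$ is admissible (the slope condition $s(M_j)=s(M)$ is precisely $\langle\chi,\lambda'\rangle=0$) and minimal (stability of the graded factors prevents any further refinement), so its associated graded is the $\varphi_{JH}$ of Proposition~\ref{jhgen}.

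Finally I would invoke the uniqueness already established: Proposition~\ref{jhgen} together with its Corollary shows $\varphi_{JH}$ is the unique polystable point in the closed orbit $S$-equivalent to $\varphi$, hence independent of the choice of minimal admissible pair; on the classical side the associated graded of a Jordan--H\"older filtration is well known to be independent of the filtration up to isomorphism, and the two agree because both are the single closed $\GL(\mathbf{i})$-orbit in $\overline{\GL(\mathbf{i})\cdot\varphi}$. This also re-proves the uniqueness statement for classical quivers promised just before the proposition.

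The main obstacle I anticipate is the bookkeeping in the previous paragraph's second half: verifying that every $\varphi$-stable subspace $W\subset V$ is automatically graded (i.e. $W=\bigoplus_i(W\cap V_i)$) so that subrepresentations in the classical sense correspond exactly to the graded-flag data coming from one-parameter subgroups of $R$, and dually that an \emph{arbitrary} Jordan--H\"older filtration --- not just one arising from a dominant $\lambda$ --- is realized by an admissible pair. The first point follows because $W$ being a subrepresentation makes it $H$-invariant (the $V_i$ are the weight spaces of the central torus $H$), and the second because any filtration by subrepresentations can be refined to a complete graded flag and thus to an OPS $\lambda$ of $R$; checking that the resulting $\lambda$ lands in the admissible range and is minimal when the factors are stable is where one must be careful to use $G,R$-dominance rather than mere $R$-dominance, exactly as flagged in the discussion after Lemma~\ref{onepargen}.
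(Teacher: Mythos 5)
Your proposal is correct, and the dictionary it rests on is exactly the one the paper's own (much terser) proof uses: parabolics of $R=\GL(\mathbf{i})$ correspond to graded flags, $\varphi\in\mathfrak{p}$ means the flag is a filtration by subrepresentations, the Levi projection is the associated graded, admissibility is the slope condition $s(M_j)=s(M)$ (equivalently $\theta(M_j)=0$ under the normalization $\theta(\mathbf{n})=0$), and stability over the Levi is stability of the graded factors. The organizational difference is that the paper matches the two \emph{inductive} procedures step by step --- it observes that a maximal admissible parabolic is precisely a one-step flag $0\neq M'\subset M$ with $s(M')=0$, which is verbatim the inductive step of the Jordan--H\"older construction --- whereas you unwind the minimal admissible pair all at once and then close with the uniqueness/S-equivalence argument. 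Both are fine; the paper's version gets uniqueness of the graded object for free from the Corollary to Proposition \ref{jhgen} just as yours does. One slip in your last paragraph: it is not true that a $\varphi$-stable subspace $W\subset V$ is automatically graded, nor does $\varphi$-stability imply $H$-invariance (take a quiver with two vertices and no arrows: every subspace of $V_1\oplus V_2$ is $\varphi$-stable, almost none are graded). The worry you raise there is, however, vacuous: a subrepresentation in the classical sense is by definition a collection of subspaces $W_i\subset V_i$, the slope functional only makes sense for such graded objects, and the flags coming from one-parameter subgroups of $R$ are graded because the eigenspaces of $\lambda$ refine the decomposition $V=\bigoplus V_i$; so both sides of the comparison already live in the graded world and no such verification is needed.
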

\begin{proof}
The result follows from a careful comparison of the procedure just described with the inductive proof of that proposition. In particular, we need to understand the inductive step in terms of subrepresentations. We note that the maximal parabolics are those fixing a minimal flag, i.e., those with only one non-trivial step $0\neq M'\subset M$. From the computation above of the Hilbert-Mumford pairing for a filtration, we see that such parabolic is admissible if and only if $s(M')=0$. This is precisely the inductive step in constructing a Jordan-H\"older filtration.
\end{proof}

An analogous study can be made for the instability type of the representation, in terms of the \emph{Harder-Narasimhan filtration}. This is the \emph{unique} filtration
\begin{equation}\label{hnfiltration}
0\neq M_1\subsetneq ... \subsetneq M_n=M
\end{equation}
such that the successive quotients $N_i:=M_i/M_{i-1}$ are semistable, and $s(N_1)> s(N_2)>...> s(N_n)$. We call the vector $(s(N_1),..., s(N_n))$ the \emph{Harder-Narasimhan type} of the representation.

\begin{proposition}\label{hnclassical}
Each most destabilizing conjugacy class of OPS determines a unique Harder-Narasimahn type for which the Hesselink stratum $S_{[\beta]}$ of the class is precisely the set of all representations of that type. Further, each blade $S_\beta$ is determined by further specifying a specific filtration of the total space (the other possible ones are conjugate.) Finally, the retraction $Z_\beta$ of $S_\beta$ by Bialinicky-Birula is precisely the set of graded objects for such types with fixed filtration.
\end{proposition}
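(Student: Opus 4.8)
The plan is to translate each of the three assertions about the Hesselink stratification into the flag dictionary established in Section~\ref{classical}, and then check that they fall out of Theorems~\ref{instparalg} and \ref{instops} together with Proposition~\ref{hnclassical}'s analogue of the slope computation. First I would fix a most destabilizing conjugacy class $[\lambda_\beta]$, and recall that by Kempf's lemma and Theorem~\ref{instparalg}, to $\beta$ there corresponds a parabolic $P(\varphi) = P_R(\beta)$ of $R = \GL(\mathbf{i})$, which in the flag interpretation of Section~\ref{parabolic} is the stabilizer of an \emph{isotropic-free} (i.e.\ arbitrary, since the ambient group is $\GL(V)$) filtration $0 \neq V^1 \subsetneq \cdots \subsetneq V^l = V$ of the total space, compatible with the grading by the $V_i$; and that the condition $\varphi \in \mathfrak{p}_R(\beta)$ says precisely that each $V^j$ is $\varphi$-invariant, i.e.\ is a subrepresentation $M_j \subset M$. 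The components $\beta_j$ of $\beta$ along the simple coroots are, by the corollary to Theorem~\ref{instparalg}, pinned down (up to the overall normalization) by the requirement that $\chi_*$ be most destabilizing; a direct computation of $m_\chi(\lambda_\beta)$ as in the proof of the slope-stability proposition above shows that this forces the jumps to be exactly at the slopes $s(N_1) > \cdots > s(N_n)$ of the successive quotients. This is the Harder--Narasimhan filtration by definition, so the conjugacy class $[\beta]$ determines, and is determined by, the HN type.

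Next I would argue that $S_{[\beta]}$ is exactly the locus of representations of that HN type. By Theorem~\ref{instparalg}, $\varphi \in S_{[\beta]}$ iff $P(\varphi) = P_R(\beta)$, which as above means $\varphi$ has a filtration with the prescribed jump indices and, by Theorem~\ref{instops} together with the Bialynicki-Birula retraction, that $p(\varphi)$ is semistable as a representation of $\tilde Q_{L(\beta)}$ --- but $L(\beta)$ is the stabilizer of the associated \emph{graded}, so $\tilde Q_{L(\beta)}$ is the product quiver $\bigoplus_j \tilde Q$ on the graded pieces, and semistability of $p(\varphi)$ is precisely semistability of each HN subquotient $N_j$ at its own slope. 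That is exactly the defining property of the HN filtration, and its uniqueness (Proposition~\ref{hnclassical}, or \cite{reineke}) shows the filtration itself is canonically attached to $\varphi$, so $S_{[\beta]}$ is cut out by the HN type alone. For the blade $S_\beta \subset S_{[\beta]}$: by Lemma~\ref{bladeaffine}, $S_{[\beta]} \simeq R \times_{P_R(\beta)} S_\beta$, and $S_\beta = \{\varphi : \lambda_\beta \in \Lambda_c(\varphi)\}$ is the set of those representations for which the destabilizing filtration is \emph{the} one fixed by $P_R(\beta)$ rather than a conjugate; thus picking a blade amounts to picking, once and for all, a specific graded-compatible filtration of $V$ realizing the HN type, and all other choices are $R$-conjugate, as claimed.

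Finally, for the retraction $Z_\beta$: the $\mathbb{G}_m$-action through $\lambda_\beta$ on $S_\beta$ has fixed-point set $S_\beta^{\lambda_\beta} = p^{-1}(\Rep(\tilde Q_{L(\beta)})^{ss}) \cap \mathfrak{l}(\beta)$, i.e.\ $Z_\beta = \Rep(\tilde Q_{L(\beta)})^{ss}$; under the flag dictionary, $\mathfrak{l}(\beta)$ is the stabilizer of the \emph{splitting} $V = \bigoplus V^j/V^{j-1}$, so its representations are exactly the graded representations $\bigoplus N_j$, and the semistable ones are those for which each $N_j$ is semistable of the appropriate slope. Hence $Z_\beta$ is the space of graded objects of the given HN type with the fixed filtration, and Bialynicki-Birula's theorem (\cite{bb}) gives the retraction $S_\beta \to Z_\beta$ as $\varphi \mapsto \lim_{t \to 0}\lambda_\beta(t)\cdot\varphi = \operatorname{gr}(\varphi)$.

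I expect the main obstacle to be the bookkeeping in the second paragraph: one must verify carefully that the numerical data extracted from ``$\lambda_\beta$ most destabilizing'' (the components $\beta_j$) really reproduce the HN slopes and jump dimensions --- that is, that minimizing $m_\chi$ over $\chi_*(G,\varphi)$ is the \emph{same} optimization as the one that characterizes the HN filtration (maximal destabilizing sub, then iterate). This is essentially Kempf's theorem specialized to $\GL$, but making the identification of the two optimization problems fully explicit, including the normalization by $\|\lambda\|$ against the chosen integral Killing-type norm, is where the real content lies; everything else is unwinding the flag dictionary already set up in Sections~\ref{parabolic} and~\ref{classical}.
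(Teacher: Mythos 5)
Your proposal runs the argument in the opposite direction from the paper, and the gap sits exactly where you yourself predicted it would. You start from the Kempf-optimal class $[\lambda_\beta]$ and try to show that minimizing $m_\chi$ forces the components $\beta_j$ to reproduce the Harder--Narasimhan slopes; but this identification of two optimization problems (Kempf's minimization of $\langle\chi,\lambda\rangle/\|\lambda\|$ versus the recursive ``maximal destabilizing sub'' construction) is precisely the hard numerical step, and you leave it at ``a direct computation\dots shows.'' As written, the first paragraph does not establish the correspondence between $[\beta]$ and the HN type, and the second paragraph inherits the problem: the claim that semistability of $p(\varphi)$ for the shifted character $\chi_L$ amounts to semistability of each subquotient $N_j$ ``at its own slope'' again presupposes that $\beta$ encodes the HN slopes, since $(\chi_L)_* = \chi_* - m_\chi(\lambda_\beta)\beta^*$. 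So the central assertion is assumed rather than proved.

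The paper avoids this computation entirely by reversing the direction. It takes the HN filtration (\ref{hnfiltration}) as given, forms the corresponding parabolic $P$, and verifies the two hypotheses of Theorem \ref{instparalg}: condition (1) is exactly semistability of the successive quotients, and condition (2) reduces, by passing to the simple weights of $P$, to a slope inequality between $M$ and each step $M_j$ of the filtration (via $\langle \chi, -\alpha \rangle=\theta(M)\dim (M_j) - \dim (M) \theta(M_j)$ for the one-step subfiltration $0 \neq M_j \subsetneq M$ cut out by a simple weight $\alpha$), which is an immediate consequence of the defining properties of the HN filtration. The uniqueness built into Theorem \ref{instparalg} then identifies $P$ with $P(\varphi)$ without ever solving the minimization: only a sign check is needed, never the actual values of the $\beta_j$. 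If you insist on the forward direction you must genuinely carry out the Kempf optimization for $\GL$ (essentially the computation in Hoskins \cite{hoskins} or \cite{zamora}); the paper's reversal is the cheaper route. Your treatment of the blade via Lemma \ref{bladeaffine} and of $Z_\beta$ via Theorem \ref{instops} and the Bialynicki-Birula retraction is correct and consistent with how the paper uses those results.
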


If we use the coincidence of the Morse and Hesselink indices for the strata, a proof follows from Proposition 3.10 in \cite{hw}, and it was carried out in \cite{hoskins} and \cite{zamora}. We will obtain an alternative proof by showing that the conditions on the filtration imply the conditions on the parabolic in Theorem \ref{instparalg}.

\begin{proof}
Let $P$ be the parabolic subgroup corresponding to the filtration (\ref{hnfiltration}). The fact that the representation factors through that filtration is equivalent to the fact that $\varphi \in \mathfrak{p}$, and the condition on the semistability of the successive quotients is equivalent to the semistability of the projection $p(\varphi)$ to Levi subalgebra $\mathfrak{l}$. We have then to interpret the condition on the slopes. For some strictly dominant OPS to have the Hilbert-Mumford pairing with the character to be negative, is is necessary that some simple weight also have, so we may assume the one-parameter subgroup is defined by such. Now, the simple weight will induce a subfiltration of (\ref{hnfiltration}), i.e., for some step $M_j$ in that filtration, the filtration of the simple weight $\alpha$ is
\begin{equation*}
0\neq M_j \subsetneq M
\end{equation*}
Futher, the pairing $\langle \chi, -\alpha \rangle=\theta(M)\dim (M_j) - \dim (M) \theta(M_j)$. It follows that $s(N_j)\leq s(M)$, which contradicts the properties of the Harder-Narasimhan filtration.
\end{proof}

We finish the algebraic discussion by remarking that the slice theorem for the case of classical quivers was worked out by LeBruyn-Procesi \cite{lp}, and it is extremely explicit and computational.

\subsection{Morse Theory}
The results that follow are all due to Harada-Wilkin \cite{hw}. We highly recommend that paper not only for the details for these results, but also for a more details on our approach here. We will first compute the level set of the moment map following King \cite{king}, using formula (\ref{affinemm}). Fix a total space $V=\bigoplus V_i$; we need to introduce a hermitian metric on $\Rep (Q,V)$, which can be easily done by picking a hermitian metric separately on each $V_i$, and then defining on each $\Hom (V_i,V_j)$ the metric $(\varphi, \psi)=\tr (\varphi \psi^*)$. This automatically determines a maximal compact of $\GL (\underline{i})$, and the infinitesimal of its Lie algebra is $\beta \cdot \varphi=\beta_{\ha} \varphi_\alpha -\varphi_\alpha \beta_{\ta}$. It is now a simple computation to show
\begin{equation*}
(\beta\cdot \varphi, \varphi)=\sum_\alpha \tr \left( \beta_{\ha} \varphi_\alpha -\varphi_\alpha \beta_{\ta} \right)= \sum_i \tr \left( \beta_i \left( \sum_{\ha =i}\varphi_\alpha \varphi_\alpha^* - \sum_{\ta=i}\varphi^*_\alpha\varphi_\alpha \right) \right)
\end{equation*}
The last expression is clearly the standard pairing with $\beta$, and so actually gives a formula for the moment map after identification of $\mathfrak{u}(\mathbf{i})$ with its dual. To obtain compatibility with the algebraic side, we know we have to shift this moment map by the derivative of the character. Since this character is determined by the choice of integers $\theta_i$, we actually obtain the equation
\begin{equation*}
\sum_{\ha =i}\varphi_\alpha \varphi_\alpha^* - \sum_{\ta=i}\varphi^*_\alpha\varphi_\alpha=\theta_i I_i
\end{equation*}
where $I_i$ is the identity on $V_i$.

Given a quiver representation $A$, the \emph{Harder-Narasimhan-Jordan-H\"older filtration} of $A$ is the double obtained by first finding the Harder-Narasimhan filtration of $A$, and then combining it with the Jordan-H\"older filtrations of each factor (which are by definition semistable;) we then speak of the HNJH object associated to $A$ to refeer to the graded object of this double filtration. Given our use of the flags associated to parabolic subgroups to interpret the poly- and instability of representations, the following shouldn't be too surprising.
\begin{proposition}[\cite{hw} Theorem 5.3]
Let $A$ be a quiver representation. Then, its limit point $A_\infty$ under the square of the moment map is isomorphic to its HNJH-graded object $A^{HNJH}$.
\end{proposition}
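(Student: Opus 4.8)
The plan is to read $A_\infty$ off from the characterization of the critical set of $f=\|\mu\|^2$ together with the structural description of the Hesselink (equivalently Morse) stratification obtained above. Write $R=\GL(\mathbf{n})$ for the group acting on $\Rep(Q,\mathbf{n})$, and $\mathfrak{r}$ for its Lie algebra. Two facts are immediate. First, from the formula for $df$ in the excerpt, $\nabla f(\psi)$ is a multiple of $\mathrm{ad}(\mu^*(\psi))\psi$ and in particular lies in $\mathrm{ad}(\mathfrak{r})\psi=T_\psi(R\cdot\psi)$; hence the steepest-descent path $\gamma_A(t)$ stays inside the orbit $R\cdot A$ for all finite $t$, and so $A_\infty=\lim_{t\to\infty}\gamma_A(t)\in\overline{R\cdot A}$. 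Second, by Proposition~\ref{criticalgeneralized}, applied to $Q$ viewed as a generalized $\GL(V)$-quiver, a representation $\psi$ is critical for $f$ exactly when, setting $\beta=\mu^*(\psi)$, it is a zero of the moment map of the generalized $L_G(\beta)$-quiver $\tilde{Q}_\beta$. Unwinding this classically: $\psi$ must split, along the $\beta$-eigenspace decomposition of the total space, as a direct sum $\psi=\bigoplus_j\psi_j$ of subrepresentations of pairwise distinct slopes, ordered $s_1>\dots>s_k$, with each $\psi_j$ polystable of slope $s_j$; and by King's description of polystable quiver representations \cite{king}, each such $\psi_j$ is a direct sum of stables of slope $s_j$. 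Therefore the critical points of $f$ are precisely the HNJH-graded objects.

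It remains to decide which critical point $A_\infty$ is. By the convergence results of Harada--Wilkin (\cite{hw}, Lemmas~3.6--3.7, which were used above to set up the Morse stratification) $\gamma_A$ converges to a well-defined critical point $A_\infty$, so $A_\infty$ is HNJH-graded by the previous paragraph; and by the very definition of the Morse stratum, $A$ and $A_\infty$ lie in the same stratum $S_{[\beta]}$. Proposition~\ref{hnclassical} identifies $S_{[\beta]}$ with the locus of representations of a single Harder--Narasimhan type, so $A$ and $A_\infty$ have the same Harder--Narasimhan type $(s_1>\dots>s_k)$ and the same dimension vectors $\mathbf{m}_j$ for their semistable factors; write $N_j=\mathrm{gr}^{HN}_j(A)$. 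Using the bundle description $S_{[\beta]}\cong R\times_{P_R(\beta)}S_\beta$ of Lemma~\ref{bladeaffine} together with the identification $Z_\beta\cong\Rep(\tilde{Q}_{L(\beta)})^{ss}\cong\prod_j\Rep(Q,\mathbf{m}_j)^{ss}$ (cf.\ Section~\ref{hesselink} and Proposition~\ref{hnclassical}), the degeneration $A_\infty\in\overline{R\cdot A}$, taking place inside $S_{[\beta]}$, restricts factor by factor to degenerations $\mathrm{gr}^{HN}_j(A_\infty)\in\overline{\GL(\mathbf{m}_j)\cdot N_j}$. Since $A_\infty$ is critical, each $\mathrm{gr}^{HN}_j(A_\infty)$ is polystable of slope $s_j$; but the unique closed orbit in the orbit closure of a semistable quiver representation is the orbit of its Jordan--Hölder-graded object (\cite{king}; cf.\ Proposition~\ref{jhgen}), so $\mathrm{gr}^{HN}_j(A_\infty)\cong\mathrm{gr}^{JH}(N_j)$. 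Assembling over $j$, $A_\infty\cong\bigoplus_j\mathrm{gr}^{JH}(\mathrm{gr}^{HN}_j(A))=A^{HNJH}$.

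The step I expect to be the main obstacle is the factor-by-factor statement used in the last paragraph: that a degeneration of $A$ that stays in its own Hesselink stratum degenerates the Harder--Narasimhan factors \emph{individually} --- equivalently, that under the associated-graded map $S_\beta\to Z_\beta\cong\prod_j\Rep(Q,\mathbf{m}_j)^{ss}$, an orbit closure taken relative to the blade maps onto the product of the orbit closures of the factors. Morally this follows from the bundle structure of the stratum, since the associated-graded map on a blade is equivariant for $P_R(\beta)$ acting on the target through its Levi $\prod_j\GL(\mathbf{m}_j)$ and is an affine fibration; but pinning the closure statement down --- equivalently, showing that the steepest-descent path degenerates $A$ first along its Harder--Narasimhan filtration and only afterwards collapses each semistable factor onto its Jordan--Hölder-graded object --- is the place where the fine geometry of the stratification, or in Harada--Wilkin's approach the analytic behaviour of the gradient flow, does real work. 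Everything else --- the formula for $\nabla f$, convergence of the flow, the classification of critical points via Proposition~\ref{criticalgeneralized}, and King's descriptions of (poly)stability --- is either a short verification or already established above.
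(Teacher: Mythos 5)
Your proof follows essentially the same route as the paper, whose entire argument is the single sentence that the result is ``a straightforward application of Proposition \ref{criticalgeneralized} together with Kempf--Ness'': you use exactly those two ingredients, supplemented by the stratification results (Lemma \ref{bladeaffine}, Proposition \ref{hnclassical}), and spell out the details. The obstacle you flag at the end --- that the degeneration $A_\infty\in\overline{R\cdot A}$ must collapse the Harder--Narasimhan factors individually onto their Jordan--H\"older graded objects rather than onto some other critical point of the same Hesselink type --- is genuine, but it is precisely the step the paper also leaves unproved and delegates to the analytic argument of \cite{hw}; your write-up is, if anything, more explicit about where the real work lies.
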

The proof of this within our framework is a straightforward application of Proposition \ref{criticalgeneralized} together with Kempf-Ness. The inductive formula in Corollary \ref{indcohom} is easily seen to correspond to formula (7.10) in \cite{hw} (and, as mentioned in that paper, also to Reineke's formula in \cite{reineke2} when all semistable points are stable.)

\section{Supermixed quivers}\label{symmetric}
The kind of results we obtained for classical, or "plain" quivers are an example of what we can do when one is dealing with classical groups. Here we'll see that a `geometric interpretation' can be given also to generalized quivers associated with non-degenerate quadratic forms.

\subsection{Symmetric and supermixed quivers}
We want to start with a few general remarks which contextualize the work that follows. The starting point is the theorem of Derksen-Weyman \cite{dw}, which classifies generalized quivers of type $Z$ for the orthogonal and symplectic groups. 
\begin{definition}
\begin{enumerate}
\item A symmetric quiver $(Q, \sigma)$ is a quiver $Q$ equipped with an involution $\sigma$ on the sets of vertices and arrows such that $\sigma t(\alpha)=h \sigma (\alpha)$, and vice-versa, and that if $\ta=\sigma \ha$, then $\alpha= \sigma (\alpha)$.

\item An orthogonal, resp. symplectic, representation $(V, C)$ is a representation $V$ of $Q$ that comes with a non-degenerate symmetric, resp. anti-symmetric, quadratic form $C$ on its total space $V_\Sigma=\bigoplus_{i \in Q_0} V_i$ which is zero on $V_i \times V_j$ if $j\neq \sigma (i)$, and such that
\begin{equation}\label{alternating}
C(\varphi_\alpha v, w)+C(v, \varphi_{\sigma (\alpha)} w)=0
\end{equation}
\end{enumerate}
\end{definition}
Note that a dimension vector for an orthogonal representation must have $n_i=n_{\sigma (i)}$; we say that such a dimension vector is `compatible.' The theorem is the following:
\begin{theorem}[Derksen-Weyman \cite{dw}]
Let $G=\Or (n,\mathbb{C})$ (resp. $\Sp (n, \mathbb{C})$.) Then, to every generalized $G$-quiver $\tilde{Q}$ with dimension vector we can associate a symmetric quiver $Q$ in such a way that the generalized quiver representations of $\tilde{Q}$ correspond bijectively to orthogonal (resp. symplectic) representations of $Q$. Conversely, every symmetric quiver with dimension vector determines a generalized $\Or (n,\mathbb{C})$-generalized quiver.
\end{theorem}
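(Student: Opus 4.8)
The plan is to unwind both sides of the claimed correspondence into the same piece of linear-algebraic data: a graded decomposition of a non-degenerate quadratic (or symplectic) space, together with a choice of off-diagonal blocks of its Lie algebra. Fix $G=\Or(V_\Sigma,C)$ (resp. $\Sp(V_\Sigma,C)$), where $(V_\Sigma,C)$ is the standard representation with its fixed non-degenerate form. A generalized $G$-quiver of type $Z$ is a pair $(R,\Rep(\tilde Q))$ with $R=Z_G(H)$ for a closed abelian reductive $H\subset G$; as noted in the excerpt, $R$ is then a Levi subgroup of $G$, so the first task is to make the classification of such Levis fully explicit on the level of $V_\Sigma$.

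First I would describe $R$ together with the decomposition it induces. Since $H$ is abelian reductive and preserves $C$, the $H$-module $V_\Sigma$ splits into weight spaces, and $C$-invariance forces the pairing between two weight spaces to vanish unless the weights are mutually inverse; hence $C$ pairs the weight space of $\chi$ non-degenerately with that of $\chi^{-1}$, and restricts non-degenerately to the fixed part. Relabelling the weight spaces as $\{V_i\}_{i\in I}$, this yields: an involution $\sigma$ on $I$ with $\chi\leftrightarrow\chi^{-1}$; a non-degenerate pairing $V_i\otimes V_{\sigma(i)}\to\mathbb C$ on each two-element orbit; and a non-degenerate form of the \emph{same type} as $C$ on each $V_i$ with $\sigma(i)=i$. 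Consequently $R=\prod_{\{i,\sigma(i)\},\,i\neq\sigma(i)}\GL(V_i)\times\prod_{i=\sigma(i)}\Or(V_i)$ (resp. $\Sp(V_i)$), the standard Levi attached to this combinatorics. The data $(I,\sigma,\mathbf n=(\dim V_i))$ is then exactly a vertex set with involution and a \emph{compatible} dimension vector ($n_i=n_{\sigma(i)}$). Conversely, from such data one builds $V_\Sigma=\bigoplus V_i$, equips each two-element orbit with a hyperbolic pairing and each $\sigma$-fixed vertex with a form of the prescribed type (a parity condition in the symplectic case), thereby producing $(V_\Sigma,C)$, and takes $H$ to be a torus — acting by $\pm1$ on the $\sigma$-fixed vertices — whose weight spaces are precisely the $V_i$; then $Z_G(H)=R$. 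This establishes the bijection on the level of "ambient" data.

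Next I would match the arrow set of $Q$ with the admissible choices of $\Rep(\tilde Q)$. Inside $\mathfrak{gl}(V_\Sigma)=\bigoplus_{i,j}\Hom(V_i,V_j)$ the subalgebra $\mathfrak g$ is cut out by $C(Xv,w)+C(v,Xw)=0$, and, using the pairings above, this condition identifies the block $\Hom(V_i,V_j)$ with the transpose of $\Hom(V_{\sigma(j)},V_{\sigma(i)})$; so the irreducible $\Ad R$-summands of $\mathfrak g$ are indexed by $\sigma$-orbits of ordered pairs $(i,j)$. An orbit with $(i,j)\neq(\sigma(j),\sigma(i))$ contributes one free copy of $\Hom(V_i,V_j)$, i.e.\ an ordinary arrow $\alpha\colon i\to j$ with $\sigma(\alpha)\colon\sigma(j)\to\sigma(i)$; an orbit with $j=\sigma(i)$ contributes the $(-1)$-eigenspace of the transpose involution on $\Hom(V_i,V_{\sigma(i)})$, namely the $\Lambda^2$-type maps in the orthogonal case and the $S^2$-type maps in the symplectic case — a self-dual arrow with $t(\alpha)=\sigma h(\alpha)$. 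Choosing $\Rep(\tilde Q)$ to be any sum of such summands (the exclusion of the trivial module just removing a degenerate self-dual loop forced to be scalar) is exactly the arrow set $A$ of a symmetric quiver $Q$ subject to the stated compatibility. Under this dictionary an element $\varphi\in\Rep(\tilde Q)\subset\mathfrak g$ is precisely a tuple $\varphi_\alpha\colon V_{t(\alpha)}\to V_{h(\alpha)}$ with $C(\varphi_\alpha v,w)+C(v,\varphi_{\sigma(\alpha)}w)=0$, which is identity (\ref{alternating}); hence $\Rep(\tilde Q)$ \emph{is} the space of orthogonal (resp. symplectic) representations of $Q$ with dimension vector $\mathbf n$. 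Finally, $R$ is exactly the group of $C$-isometries of $V_\Sigma$ preserving the grading, so $R$-equivalence on $\Rep(\tilde Q)$ coincides with isomorphism of the corresponding orthogonal (resp. symplectic) representations, which closes the correspondence.

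I expect the main obstacle to be the sign bookkeeping in the arrow-matching step: carefully tracking the transpose-and-dualize involution on $\bigoplus_{i,j}\Hom(V_i,V_j)$ so as to confirm that self-dual blocks land on $\Lambda^2$-type maps for $\Or$ and $S^2$-type maps for $\Sp$ (and hence that the "no trivial module" hypothesis translates correctly), together with the surjectivity direction — that \emph{every} compatible symmetric quiver is realized — which rests on the observation that an arbitrary such vertex involution lifts to an honest torus, with $2$-torsion on $\sigma$-fixed vertices, inside $\Or$ or $\Sp$. Both are routine representation theory of classical Levi subgroups, but they are where the conventions are pinned down.
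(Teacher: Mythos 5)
The paper does not prove this statement at all: it is quoted verbatim from Derksen--Weyman \cite{dw} and used as a black box, so there is no in-paper argument to compare yours against. Your reconstruction is, in outline, the standard proof from \cite{dw}, and it is essentially correct: you classify the centralizers $R=Z_G(H)$ via the weight-space decomposition of $(V_\Sigma,C)$ under $H$ (non-degenerate pairing between the $\chi$- and $\chi^{-1}$-weight spaces, a form of the same type on the fixed part), read off the involution $\sigma$ and the compatible dimension vector, decompose $\mathfrak g\subset\mathfrak{gl}(V_\Sigma)$ into $\Ad R$-blocks indexed by $\sigma$-orbits of pairs $(i,j)$, and observe that the defining condition $C(Xv,w)+C(v,Xw)=0$ is exactly identity (\ref{alternating}). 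Two points deserve more care than your sketch gives them. First, the self-dual blocks: for a two-element orbit $\{i,\sigma(i)\}$ the block $\Hom(V_i,V_{\sigma(i)})\cong V_i^*\otimes V_i^*$ splits as $S^2\oplus\Lambda^2$, and only one summand lies in $\mathfrak g$; you flag this sign bookkeeping yourself, and it is indeed where the dictionary between ``self-dual arrows'' and irreducible summands is pinned down. Second, and more substantively, the diagonal block at a non-$\sigma$-fixed vertex, $\{(i,i),(\sigma(i),\sigma(i))\}$ with $i\neq\sigma(i)$, contributes $\End(V_i)\cong\mathfrak{sl}(V_i)\oplus\mathbb C$, which is \emph{not} irreducible and contains a trivial $\Ad R$-summand; so a loop at such a vertex does not literally correspond to an irreducible factor of $\mathfrak g$, and the ``no trivial representation'' clause of the definition interacts with it. This edge case has to be addressed (either by excluding such loops or by interpreting them via $\End(V_i)$ with multiplicity, as Derksen--Weyman do) for the stated bijection to be exact; your phrase about ``removing a degenerate self-dual loop forced to be scalar'' gestures at the wrong case (the genuinely self-dual blocks are the $\Lambda^2/S^2$ ones and contain no trivial summand for $\dim V_i\geq 2$). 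Apart from that, the argument is sound and is the intended one.
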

In fact, there is a unifying definition, that of \emph{supermixed quiver}, which allows for a mix of orthogonal and symplectic symmetries depending on some extra data chosen for the vertices. These supermixed quivers in fact arise in the study of symmetric quivers themselves, as we'll see below, since the slice theorem for orthogonal representations will naturally be encoded by a supermixed quiver, as shown in \cite{bocklandt}.
\begin{definition}
\begin{enumerate}
\item A \emph{supermixed quiver} $(Q, \sigma, \epsilon)$ is a symmetric quiver $(Q, \sigma)$ together with a sign map $\epsilon : I \cup A \to \{ \pm 1\}$ such that $\epsilon_i \epsilon_{\sigma (i)}=\epsilon_\alpha \epsilon_{\sa}=1$.
\item A \emph{supermixed representation} is a representation $(V, \varphi)$ of $Q$ together with a non-degenerate quadratic form $C$ with the conditions: (a) its restriction as a bilinear form on $V_i\times V_j$ is zero if $j\neq \sigma (i)$; (b) the restrictrion as a quadratic form on the sum $V_i\oplus V_{\sigma{(i)}}$ has $C(v,v')=\epsilon_iC(v',v)$; and (c) $C(\varphi_\alpha v, v') + C(v, \varphi_{\sa}v')=0$.
\end{enumerate}
\end{definition}
Note that if $\epsilon$ is identically $1$ (resp. $-1$,) the we recover the orthogonal (resp. symplectic) representations above. The original reference for this is Zubkov \cite{zubkov1} \cite{zubkov2}, where the invariants for such quivers are computed.

\subsection{Stability of supermixed representations}

Since there is, up to isomorphism, a unique finite dimensional vector space in each dimension, and since over the complex numbers the (anti)symmetry uniquely determines the quadratic form, we may as well, in discussing supermixed quivers, fix a total space $V$ and the quadratic form $C$. Denote by $\Rep (Q,V,C)$ the space of supermixed representations; since every such representation is in particular a representation of $Q$, there is a `forgetful map'
\begin{equation*}
f:\Rep (Q,V,C) \to \Rep (Q,V)
\end{equation*}
which is clearly injective. Indeed, we can indetify the first as a subspace of the second explicitly as follows: the quadratic form $C$ induces an involution $*: \Rep (Q,V) \to \Rep (Q,V)$, namely transposition; the first space is then the $-1$ eigenspace of this involution. The symmetry group of a supermixed quiver can be found in the same way: there is also an adjoint map defined on $\GL (\mathbf{n})$, and the symmetry group is the group $\Or (\mathbf{n})$ of elements such that $g^*g=gg^*=1$ (we're here abusing notation, since the group in general is not a subgroup of the orthogonal group, but this avoids introducing new notation.) This group is isomorphic to a product
\begin{equation*}
\Or (\mathbf{n})\simeq \prod_{\begin{subarray}{c} i=\sigma (i)\\ \epsilon_i=1\end{subarray}} \Or (V_i) \times \prod_{\begin{subarray}{c} i=\sigma (i)\\ \epsilon_i=-1\end{subarray}} \Sp(V_i) \times \prod_{\begin{subarray}{c} [i]\\ i\neq \sigma (i)\end{subarray}} \GL(V_i)
\end{equation*}
\emph{where in the last product, we mean to take on factor for each orbit of $\sigma$,} and not for each $i$. Denote the set of indices in the first product by $O$, the second by $S$, and a fixed set of representatives for the orbits indexing the third product by $G$.

The map $f$ naturally induces a semistability condition on $\Rep (Q,V,C)$ by restriction of a character $\chi$ to $\Or (\mathbf{n})$. For such concordance of stability conditions, the map $f$ naturally descends to a map between the quotients of reprsentations. It is a result of Zubkov \cite{zubkov1} that for the trivial character on both, this natural map is actually a closed embedding.

However, a look at the above isomorphism of groups shows that these induced characters only give a small subset of possibilities. Instead, take integers $\theta_i$ for $i \in O\cup S\cup G$ with $\theta_i=0,1$ for $i\in O$, and $\theta_i=0$ for $i\in S$. Such vector of integers parametrizes the complete set of characters of $\Or (\mathbf{n})$. Since we also want to apply the symplectic machinery, we will always consider $\theta_i=0$ for $i\in O\cup S$; the condition on the kernel of the representation implies $\sum \theta_i n_i=0$.

We will now study the resulting stability properties in a way that is analogous to the case of classical quivers. The first thing to be done is to deduce a slope condition for stability. This can be done exactly like in the classical case: take a one-parameter subgroup $\lambda$ of $\Or (\mathbf{n})$, and consider the associated filtration of $V$. We define the theta functional just as above, except we take only one summand for each $i$ not fixed by $\sigma$, i.e.,
\begin{equation*}
\theta (M)=\sum_{i\in G} \theta_i n_i
\end{equation*}
since $\theta_i=0$ for $O\cup S$. King's computation straightforwadly extends to show that $\langle \chi, \lambda \rangle =\sum \theta (M_l)$, where $M_l$ are the steps in the filtration induced by $\lambda$. What we need now is to characterize the subrepresentations determined by parabolics of $\Or (\mathbf{n})$. Given a total space $V$, denote
\begin{equation*}
V'=\bigoplus_{i\in O\cup S} V_i \qquad V''=\bigoplus_{i\in G} (V_i\oplus V_{\sigma(i)})
\end{equation*}
Then, the parabolic of $\Or (\mathbf{n})$ induces a filtration $0\subset V_1\subset .... V_l \subset ... \subset V$ which is induced by filtrations on each of the vertices. In particular, it is a concatenation of filtrations on $V'$ and $V''$, and we have
\begin{itemize}
\item[---] The corresponding flag of $V'$ is isotropic in the sense of section \ref{parabolic};

\item[---] The filtration on each $V_i\oplus V_{\sigma (i)}$ is a `transposition,' in the sense that the filtration on $V_i$ is arbitrary, and the filtration on $V_{\sigma (i)}$ is dual filtration naturally induced by $C$.
\end{itemize}
Given a subrepresentation $M'=(W,\varphi)\subset M$ such that $W$ satisfies this conditions with respect to $V,C$, we will say it is an \emph{isotropic subrepresentation}, though again we are here appropriating terminology that is specific to the exclusively orthogonal or symplectic case. The result is then
\begin{proposition}
The representation $M$ is $\theta$-semistable if and only if for any non-trivial, isotropic subrepresentation $M'\subsetneq M$ we have $\theta (M')\leq 0$; it is stable if strict inequality always applies.
\end{proposition}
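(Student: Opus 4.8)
The plan is to run exactly the argument used above for classical quivers, with the flag interpretation of parabolic subgroups of $\prod\GL(V_i)$ replaced by that of parabolic subgroups of $\Or(\mathbf{n})\simeq\prod_{i\in O}\Or(V_i)\times\prod_{i\in S}\Sp(V_i)\times\prod_{i\in G}\GL(V_i)$. Concretely, this proposition is simply the affine GIT problem for the group $\Or(\mathbf{n})$ acting linearly on $\Rep(Q,V,C)$ with the linearization $\theta$, so I would invoke the affine Hilbert--Mumford criterion (\cite{king} 2.5): $M$ is $\theta$-semistable iff $\langle\theta,\lambda\rangle\ge 0$ for every one-parameter subgroup $\lambda$ of $\Or(\mathbf{n})$ for which $\lim_{t\to 0}\lambda(t)\cdot\varphi$ exists, and $\theta$-stable iff moreover the only such $\lambda$ with $\langle\theta,\lambda\rangle=0$ lie in the kernel $\Delta$ of the representation. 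It then remains to translate the right-hand side into a statement about isotropic subrepresentations.

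The key step is the flag dictionary. By the flag description of Section~\ref{parabolic}, applied factor by factor, a one-parameter subgroup $\lambda$ of $\Or(\mathbf{n})$ determines, through its eigenvalues on the total space $V$, a filtration $0=V^0\subsetneq V^1\subsetneq\cdots\subsetneq V^r=V$ which restricts on $V'=\bigoplus_{i\in O\cup S}V_i$ to an isotropic flag and on each paired block $V_i\oplus V_{\sigma(i)}$, $i\in G$, to a ``transposition'' flag (arbitrary on $V_i$, $C$-dual on $V_{\sigma(i)}$); and $\lim_{t\to 0}\lambda(t)\cdot\varphi$ exists precisely when $\varphi$ preserves this filtration, that is, when each $M^k=(V^k,\varphi)$ is a subrepresentation. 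By the very shape of such a filtration, each $M^k$ is then an isotropic subrepresentation in the sense of the definition; conversely, given a non-trivial isotropic subrepresentation $M'=(W,\varphi)$, the stabilizer of the flag that $W$ generates (which automatically stabilizes $W^\perp$ on the $V'$-part and the $C$-dual of $W$ on each paired block) is a parabolic of $\Or(\mathbf{n})$, hence equal to $P(\lambda)$ for some $\lambda$ whose filtration has $M'$ among its steps.

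With this in hand the computation of King carries over unchanged, since $\theta$ involves only the $\GL$-factors (the $\theta_i$ vanish on $O\cup S$): one gets $\langle\theta,\lambda\rangle=\sum_k a_k\,\theta(M^k)$, the sum over the intermediate steps, with coefficients $a_k<0$ given by the consecutive eigenvalue gaps of $\lambda$. Varying those gaps and using the two-way correspondence of the previous paragraph, the requirement that $\langle\theta,\lambda\rangle\ge 0$ for every $\lambda$ with existing limit becomes the requirement that $\theta(M')\le 0$ for every non-trivial isotropic subrepresentation $M'$, which is the asserted semistability criterion. The stable case is identical with strict inequalities, once one observes that the only $\lambda$ with existing limit inducing a trivial one-step flag --- equivalently, those testing only the trivial subrepresentations $0$ and $M$, on which $\theta$ vanishes --- are exactly those lying in $\Delta$.

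The main obstacle is the flag dictionary in the second paragraph: one must check carefully that every intermediate step of a parabolic-of-$\Or(\mathbf{n})$ filtration really is an isotropic subrepresentation in the precise sense defined --- including the ``coisotropic'' upper steps on $V'$ and the mixed behaviour across the three kinds of vertices --- and, conversely, that every isotropic subrepresentation arises from such a parabolic, the $\GL$-factors entering diagonally over the $\sigma$-orbits being what forces the ``transposition'' shape. This is a matter of unwinding the structure of parabolics in a product of classical groups against the definition of an isotropic subrepresentation; it is routine, but it is where all the content lies. A minor further point is keeping the signs and normalizations in the Hilbert--Mumford weight straight, so that ``$\langle\theta,\lambda\rangle\ge 0$'' corresponds to ``$\theta(M')\le 0$''.
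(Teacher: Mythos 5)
Your proposal follows the same route as the paper: the paper also reduces the statement to King's affine Hilbert--Mumford criterion, uses the flag interpretation of parabolics of $\Or(\mathbf{n})$ (isotropic flags on the $O\cup S$ part, ``transposition'' flags on the paired $\GL$ blocks) to identify the limits-exist condition with preservation of an isotropic filtration, and extends King's computation to get $\langle\chi,\lambda\rangle=\sum\theta(M_l)$ over the steps. The flag dictionary you single out as the main obstacle is exactly the content the paper supplies in the discussion immediately preceding the proposition, so your argument is essentially the paper's.
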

\begin{remark}
Since the $\theta$-functional only depends on half of the non-fixed vertices, it might be tempting to think that only those determine the stability of a representation. For example, one might want to extract the subquiver determined by those vertices and consider the induced representations of that new quiver by truncation. One should keep in mind, however, that whether a given subrepresentation of this new quiver is a subrepresentation of the old one is controlled also by the orthogonal and symplectic vertices, and so in fact they are always in the background conditioning the representations.
\end{remark}

Just as for classical quivers, one can -- and should -- relax the condition on the numbers $\theta_i$. For a representation $M$ with dimension vector $\mathbf{n}$, define then
\begin{equation*}
\dim' (M)=\sum_{i\in G} n_i
\end{equation*}
Since we want to keep $\theta_i=0$ for $i\in O\cup S$, we can only add multiples of $\dim'$, and not multiples of $\dim$. Therefore, the slope of the representation is defined as
\begin{equation*}
s(M)=\frac{\theta(M)}{\dim'(M)}
\end{equation*}
Repeating the argument for classical quivers for a collection of $\theta_i$, $i\in G$, arbitrary, we get
\begin{corollary}
A representation $M$ is semistable if and only if $s(M')\leq s(M)$ for all non-trivial subrepresentations $M'\subsetneq M$; it is stable if strict inequality always applies.
\end{corollary}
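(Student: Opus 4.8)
The plan is to reduce this corollary to the preceding proposition by the same normalization trick already used in the classical case. Starting from an arbitrary collection of integers $\theta_i$ for $i\in G$, I extend it by $\theta_i=0$ for $i\in O\cup S$, which induces a linear functional $\theta$ on dimension vectors (depending only on the $G$-part). This $\theta$ need not satisfy the kernel condition $\sum\theta_i n_i=0$, so I replace it by the corrected functional $\theta'=(\dim'\mathbf{n})\,\theta-\theta(\mathbf{n})\,\dim'$, which is manifestly integral, vanishes on $O\cup S$, and satisfies $\theta'(\mathbf{n})=0$; hence $\theta'$ defines an admissible character of $\Or(\mathbf{n})$, and the previous proposition applies verbatim to it.

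Next I unwind what $\theta'$-semistability says. For a non-trivial isotropic subrepresentation $M'\subsetneq M$ the proposition gives $\theta'(M')\le 0$, i.e.
\begin{equation*}
(\dim'\mathbf{n})\,\theta(M')-\theta(\mathbf{n})\,\dim'(M')\le 0.
\end{equation*}
Dividing by the positive quantity $\dim'(M')\cdot\dim'(\mathbf{n})$ (here I need $\dim'(M')>0$; see below) this is exactly $s(M')\le s(M)$, and strict inequality transfers in the same way, giving the stable case. So the content of the corollary is just this rescaling, together with the remark — already present in the text for the classical case — that multiplying $\theta$ by a positive integer or adding a multiple of $\dim'$ leaves the (semi)stability condition unchanged, so no generality is lost by having started from integers rather than rationals.

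The one genuine point to watch is the passage from \emph{isotropic subrepresentation} in the proposition to \emph{arbitrary subrepresentation} in the corollary, and the degenerate case $\dim'(M')=0$. I would handle these as follows. If $\dim'(M')=0$, then $W\subset V'$ meets none of the $\GL$-vertices, so $\theta(M')=0=\theta'(M')$ and the inequality $\theta'(M')\le0$ is automatic and carries no information — consistently, such $M'$ have undefined slope and are excluded from the slope statement, exactly as trivial subrepresentations are. For a general (not necessarily isotropic) subrepresentation $M'$ with $\dim'(M')>0$: given $W\subset V$ that is $\varphi$-stable, I pass to the smallest $C$-isotropic, $\varphi$-stable subspace containing the "$G$-part" of $W$ — concretely, on each pair $V_i\oplus V_{\sigma(i)}$ one keeps the filtration dictated by $W\cap V_i$ and its $C$-dual on $V_{\sigma(i)}$, and on $V'$ one replaces $W\cap V'$ by $(W\cap V')+(W\cap V')^{\perp_C}$-type completion; $\varphi$-stability of the result follows from the compatibility $C(\varphi_\alpha v,w)+C(v,\varphi_{\sa}w)=0$. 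This produces an isotropic subrepresentation $\tilde M'$ with $\dim'(\tilde M')=\dim'(M')$ and $\theta(\tilde M')=\theta(M')$ (since $\theta$ only sees the $G$-vertices, which are untouched), so $s(\tilde M')=s(M')$, and the bound for $\tilde M'$ from the proposition yields the bound for $M'$. The hard part is precisely checking that this isotropization preserves $\varphi$-stability and the $G$-part of the dimension vector; everything else is the bookkeeping of the rescaling, which is identical to the classical argument already recorded above.
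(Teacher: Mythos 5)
The normalization part of your argument is exactly what the paper does: it derives this corollary in one line by ``repeating the argument for classical quivers,'' i.e.\ by passing from an arbitrary integer vector $(\theta_i)_{i\in G}$ to the corrected functional and dividing. Your version $\theta'=(\dim'\mathbf{n})\,\theta-\theta(\mathbf{n})\,\dim'$ even has the correct signs (the paper's classical formula $\theta'=(\dim V)\theta+\theta(\mathbf{n})\dim$ contains a sign typo, since it does not annihilate $\mathbf{n}$). So the first two paragraphs of your proposal are the paper's entire proof, and they are fine.

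The problem is your third paragraph. You are right that the proposition speaks of \emph{isotropic} subrepresentations while the corollary says ``subrepresentations,'' but the paper's intent (and its only subsequent use of the slope condition, in the Jordan--H\"older and Harder--Narasimhan constructions) is the isotropic version; read that way, no bridging argument is needed. The isotropization you propose to supply the literal statement does not work as described. Keeping $W_i$ at a vertex $i\in G$ while replacing $W_{\sigma(i)}$ by the $C$-annihilator of $W_i$ is not $\varphi$-stable in general: for an arrow $\alpha: i\to \sigma(j)$ with $i,j\in G$ you would need $\varphi_\alpha(W_i)\subset \mathrm{Ann}_C(W_j)$, whereas the compatibility $C(\varphi_\alpha v,w)+C(v,\varphi_{\sigma(\alpha)}w)=0$ only yields $\varphi$-stability of the \emph{full} perpendicular $W^{\perp}$, not of this mixed object. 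Similarly, $(W\cap V')+(W\cap V')^{\perp}$ is coisotropic, not isotropic, so it cannot serve as a step of an isotropic flag; the isotropic candidate is $(W\cap V')\cap(W\cap V')^{\perp}$, which changes the dimension vector. Any correct bridge would have to pass through $W\cap W^{\perp}$ and $W+W^{\perp}=(W\cap W^{\perp})^{\perp}$, and it is not clear the needed inequality follows, because $\theta$ and $\dim'$ see only one representative of each $\sigma$-orbit, so $\theta(W^{\perp})$ is not determined by $\theta(W)$. One further caveat: for the \emph{stable} clause, a non-trivial isotropic $M'$ with $\dim'(M')=0$ satisfies $\theta'(M')=0$ and hence obstructs $\theta'$-stability while being invisible to the slope condition; your claim that such $M'$ ``carry no information'' is correct for semistability but not for stability, so the degenerate case cannot simply be discarded there.
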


Using this slope condition, we can now formally define Jordan-H\"older objects and Harder-Narasimhan filtrations. To construct the first, suppose the representation $M$ is strictly semistable, and choose a minimally dimensional, non-trivial isotropic subrepresentation $M_1\subset M$; this subrepresentation determines a maximal parabolic stabilizing the flag
\begin{equation*}
0\neq M_1\subsetneq M_1^\perp \subsetneq M
\end{equation*}
(The last inclusion is strict since the quadratic form is non-degenerate.) The graded representation $W_l$ associated with this filtration is naturally a representation for some Levi subgroup $L_1$. We form the quotient $M_1^\perp/M_1$, which is a well-defined supermixed representation, and repeat the process, finding a chain of representations corresponding to a chain $L_1\supset L_2 \supset...$ of successively smaller Levis. This process must stop at some step $l$, for at some point $M_l$ is necessarily stable or of minimal rank. The associated representation $W_l$ must be stable for as a representation associated with $L_l$: for otherwise a destabilizing one-parameter subgroup would imply some $M_j$ is not stable. We conclude then
\begin{proposition}
The graded representation $W_l$ obtained by the inductive process above is precisely the Jordan-H\"older object for $M$.
\end{proposition}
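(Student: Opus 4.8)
The goal is to identify $W_l$, up to isomorphism, with the Lie-theoretic Jordan--H\"older object $\varphi_{JH}$ produced by Proposition \ref{jhgen} for the generalized quiver attached to $(Q,\sigma,\epsilon)$, whose symmetry group is $R=\Or(\mathbf{n})$; this is the supermixed counterpart of the coincidence already recorded for classical quivers, and I would run the same argument: match the inductive construction by minimal isotropic subrepresentations with the inductive proof of Proposition \ref{jhgen}, the crux being the identification of the admissible parabolics of $R$.

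First I would recast admissibility in flag-theoretic terms. By section \ref{parabolic}, a parabolic of $R=\Or(\mathbf{n})$ is the stabiliser of a grading-compatible filtration of the total space that is isotropic in the sense used there: genuinely isotropic on the part supported by the orthogonal and symplectic vertices, and a transposition of a flag on $V_i$ with its $C$-dual on $V_{\sigma(i)}$ on each paired block. The maximal proper ones are precisely those with a single proper isotropic step, i.e.\ those fixing a flag $0\neq M_1\subsetneq M_1^\perp\subsetneq M$. Feeding this flag into the extended King computation $\langle\chi,\lambda\rangle=\sum_l\theta(M_l)$ and using the compatibility of the dimension vector with $\sigma$ together with the fact that $\theta$ counts only the $G$-vertices, the Hilbert--Mumford pairing of the strictly $G,R$-dominant OPS generating such a maximal parabolic reduces to a nonzero multiple of $\theta(M_1)$, the contribution of the middle step dropping out by the isotropy of $M_1$. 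Hence this maximal parabolic is admissible precisely when $\theta(M_1)=0$, equivalently $s(M_1)=s(M)$ after normalisation; and this is exactly the condition imposed on $M_1$ in the inductive construction. One must also verify that the resulting pair $(P_R,P)$ meets $P\cap R=P_R$ and the $G,R$-dominance requirement of Proposition \ref{gaugestability}, which is where the bookkeeping with the embedding of $R$ in the ambient group and with the centre of $P_R$ has to be carried out carefully.

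Granting this, the induction proceeds as for classical quivers. Taking $M_1$ of minimal dimension with $s(M_1)=s(M)$ forces $M_1$, and dually $M/M_1^\perp\cong M_1^*$, to be stable. By Lemma \ref{onepargen} the projection $p_1(\varphi)$ to the Levi $L_1$ is semistable, and it is the graded object $M_1\oplus(M_1^\perp/M_1)\oplus M_1^*$; the only factor that can fail to be stable is the middle one, a supermixed representation of strictly smaller rank, so the construction is repeated there. By Lemma \ref{indadmissible} an admissible parabolic for $M_1^\perp/M_1$ inside $L_1$ lifts to an admissible parabolic for $M$ inside $R$, so iterating builds up a minimal admissible pair for $M$ in the sense of Proposition \ref{jhgen}. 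Since the semisimple rank of the successive Levis strictly decreases, the process terminates at some $L_l$ with $W_l$ stable, and by construction $W_l$ equals the Jordan--H\"older object $\varphi_{JH}$ of Proposition \ref{jhgen}, which that proposition shows to be polystable and $S$-equivalent to $M$; in particular it is independent of the choices up to isomorphism and is the Jordan--H\"older object of $M$.

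The main obstacle is the flag-theoretic identification just described: one must carry out the Hilbert--Mumford computation for a maximal parabolic of the \emph{supermixed} group $\Or(\mathbf{n})$ --- a product of orthogonal, symplectic and general linear factors rather than a literal subgroup of an orthogonal group --- keeping straight the $G,R$-dominance condition and the fact that the orthogonal and symplectic vertices silently restrict which subspaces occur as subrepresentations; all of this is routine but somewhat delicate.
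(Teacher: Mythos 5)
Your argument is correct and follows essentially the same route the paper takes: the paper leaves the proof implicit in the preceding construction (stability of $W_l$ via the absence of destabilizing one-parameter subgroups) together with the comparison, carried out explicitly only in the classical case, between maximal admissible parabolics and single-step isotropic flags with $s(M_1)=s(M)$. Your version simply spells out that comparison for $\Or(\mathbf{n})$, including the appeal to Lemma \ref{indadmissible} and Proposition \ref{jhgen}, and is if anything slightly more careful than the paper's own text, which omits the slope condition on the chosen minimal isotropic subrepresentation.
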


We want now to characterize also the Hesselink strata in terms of filtrations; in other words, we want to find the Harder-Narasimhan object for a given representation. Assume $M$ is an unstable representation, and let $M_1$ an isotropic subrepresentation of maximal slope, and maximal dimension with that property. Again, this fits into a flag
\begin{equation*}
0\neq M_1\subsetneq M_1^\perp \subsetneq M
\end{equation*}
corresponding to some parabolic subgroup $P_1$. The associated graded object (i.e., the object corresponding to the projection to the Levi subalgebra) is
\begin{equation*}
M_{\mathrm{gr},1}=(M_1\oplus M_1^*)\oplus M_1^\perp /M_1
\end{equation*}
where recall that using the quadratic form we get an isomorphism $M_1^*=M/M_1^\perp$. This is a splitting as an orthogonal representation, since both $M_1\oplus M_1^*$ and $M_1^\perp/M_1$ are orthogonal representations. The condition on $M_1$ ensures that $M_1\oplus M_1^*$ is actually semistable. If $M_1^\perp/M_1$ is not, then we repeat the procedure. The result is a filtration
\begin{equation*}
0\neq M_1 \subsetneq ... M_l \subsetneq M_l^\perp \subsetneq ... \subsetneq M_1^\perp \subsetneq M
\end{equation*}
where we have $\mu(M_1)>...>\mu (M_l)$, and also that $M_k/M_{k-1}\oplus M_{k-1}^\perp/M_k^\perp$$k=1,...,l-1$ and $M_l^\perp/M_l$ are semistable orthogonal representations. In analogy with the plain case, we'll refer to this filtration as the Harder-Narasimhan filtration of $M$. Arguing as in Proposition \ref{hnclassical} we can prove the following proposition.
\begin{proposition}
Each most destabilizing conjugacy class of OPS determines a unique Harder-Narasimhan type for which the Hesselink stratum $S_{[\beta]}$ of the class is precisely the set of all representations of that type. Further, each blade $S_\beta$ is determined by further specifying a specific filtration of the total space (the other possible ones are conjugate.) Finally, the retraction $Z_\beta$ of $S_\beta$ by Bialinicky-Birula is precisely the set of graded objects for such types with fixed filtration.
\end{proposition}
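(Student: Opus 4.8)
The plan is to follow the strategy used for Proposition~\ref{hnclassical}, replacing the flag interpretation of parabolics of $\GL(V)$ by that of the parabolics of $\Or(\mathbf{n})$, which (by the discussion in section~\ref{parabolic}) are precisely the stabilizers of isotropic filtrations of $V'$ concatenated with ``transposition'' filtrations of the $V_i\oplus V_{\sigma(i)}$. First I would fix an unstable representation $M=(V,\varphi)$ together with its Harder--Narasimhan filtration $0\neq M_1\subsetneq\cdots\subsetneq M_l\subsetneq M_l^\perp\subsetneq\cdots\subsetneq M_1^\perp\subsetneq M$, and let $P$ be the parabolic of $\Or(\mathbf{n})=R$ stabilizing it; this also fixes the pair $(P_R,P_G)$ with $P_G=P(\lambda)$ for a suitable OPS $\lambda$ of $R$. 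I would then check the two hypotheses of Theorem~\ref{instparalg}: (i) that $\varphi\in\mathfrak{p}$, which is exactly the statement that $\varphi$ preserves the filtration, hence holds by construction; and (ii) that the projection $p(\varphi)$ to the Levi $\mathfrak{l}$ is semistable, which is exactly the statement that the graded pieces $M_k/M_{k-1}\oplus M_{k-1}^\perp/M_k^\perp$ and $M_l^\perp/M_l$ are semistable supermixed representations, again true by construction of the HN filtration.

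The remaining, and main, point is to verify condition (2) of Theorem~\ref{instparalg}: that every $G,R$-dominant element $\beta\in\mathfrak{p}$ pairs strictly positively with $\chi_*$. As in the proof of Proposition~\ref{hnclassical}, it suffices to test this on the simple weights of $P_R(\lambda)$, and each such simple weight induces a one-step subfiltration $0\neq M_j\subsetneq M_j^\perp\subsetneq M$ of the HN filtration (the isotropic closure appearing automatically because $\Or(\mathbf{n})$-parabolics fix isotropic flags). Using King's computation as extended above --- $\langle\chi,\lambda\rangle=\sum\theta(M_l)$ together with the formula for the pairing with the simple weight --- I would translate $\chi_*(\beta)>0$ into the slope inequality $s(N_j)<s(M)$, where $N_j=M_j/M_{j-1}$; this is precisely the strict decrease $\mu(M_1)>\cdots>\mu(M_l)$ imposed on a Harder--Narasimhan filtration, so condition (2) holds. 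Conversely, starting from a parabolic satisfying (1) and (2), the same dictionary reconstructs an isotropic filtration with semistable graded pieces and strictly decreasing slopes, i.e. the HN filtration of $\varphi$; by its uniqueness this shows the HN type is an invariant of the most destabilizing conjugacy class of $\lambda$, and conversely determines it.

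Finally, for the statement about blades and the Bialinicky-Birula retraction, I would invoke Lemma~\ref{bladeaffine} and Theorem~\ref{instops}: the stratum $S_{[\beta]}$ depends only on the conjugacy class $[P_R(\varphi)]$, i.e. on the HN type, whereas a blade $S_\beta$ fixes one parabolic in that class, i.e. one specific isotropic filtration (the others being $R$-conjugate to it); and $Z_\beta=p(S_\beta)$ is identified by Theorem~\ref{instops} with $\Rep(\tilde{Q}_{L(\beta)})^{ss}$, which under the flag dictionary is exactly the set of semistable graded objects of the given HN type with the chosen filtration. The obstacle I anticipate is bookkeeping in the last translation: ensuring that $G,R$-dominance (not mere $R$-dominance) is used correctly, since a dominant weight of $P_R(\lambda)$ has an unconstrained component along the centre of $P_R(\lambda)$, and one must check that the half-vertex $\theta$-functional and the isotropy constraint conspire so that only the combinations appearing in the slope inequalities are actually tested --- this is where the supermixed case genuinely differs from the plain one.
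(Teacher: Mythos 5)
Your proposal is correct and is essentially the argument the paper intends: the paper gives no separate proof for the supermixed case, stating only that one argues as in Proposition~\ref{hnclassical}, and your write-up carries out exactly that argument --- verifying the hypotheses of Theorem~\ref{instparalg} for the parabolic stabilizing the Harder--Narasimhan filtration, translating condition (2) into the slope inequalities via the simple weights, and handling the blades and the Bialinicky-Birula retraction through Lemma~\ref{bladeaffine} and Theorem~\ref{instops}. Your closing remark about $G,R$-dominance and the isotropy constraint is a legitimate point of care that the paper's own (omitted) proof glosses over.
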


\subsection{An example}

We will now apply the inductive formula we deduced above to particular examples of orthogonal representations of the symmetric quiver

\begin{center}
\begin{tikzcd}[column sep=large]
Q:& 1& 2\ar[l,"\alpha"] & 3 \ar[l,"\beta"]\ar[r,"\sigma(\beta)" ']\ar[loop,"\gamma",swap,looseness=4] & \sigma (2) \ar[r,"\sigma(\alpha)" '] & \sigma (1)
\end{tikzcd}
\end{center}

Here, vertex 3 and arrow $\gamma$ are fixed by the involution; we will fix the dimension vector $d=(1,1,n)$. The choice of a stability condition is the choice of two integers $\theta_1$ and $\theta_2$. Stability of representations depends principally on the relative value of these parameters.
\begin{itemize}
\item $\theta_1=\theta_2$: This is the case of the trivial character, and the inexistence of instability renders our formula quite trivial. However, generators for the coordinate ring of the moduli of representations have been computed by Zubkov \cite{zubkov1} \cite{zubkov2} and Serman \cite{serman}.

\item $\theta_1<\theta_2$: Since $d_1=d_2=1$, a subrepresentation $M'\subset M$ is destabilizing if and only if it has dimension vector $d'=(0,1,n')$, and this is only possible if $\alpha=0$. The most destabilizing $E$ is then determined by the maximally isotropic $E_3\subset V_3$ with $\gamma (E_3)\subset E_3$. It is then a semistable plain representation of the quiver
\begin{center}
\begin{tikzcd}[column sep=large]
Q':&  2 & 3 \ar[l,"\beta"]\ar[loop,"\gamma",swap,looseness=4]
\end{tikzcd}
\end{center}
The induced stability condition is trivial: its subrepresentation can only have slope zero or $\mu(E')=\theta_2>0$. denote $n_1=\dim E_3$, and $n_2=n-2n_1$; then, in the Harder-Narasimhan splitting $M=E\oplus E^*\oplus D$, $n_2=\dim D_3$. Further, $D$ is an orthogonal representation of $Q'$ above, satsifying two extra conditions: first, the map $\beta_D: D_3\to D_2$ is non-zero if and only if $\beta (E_3)=0$; second, the map $\gamma_D$ cannot fix any isotropic subspace (by definition of $E_3$,) and this is equivalent to the representation $D$ actually being orthogonally stable for the trivial character. In other words, if we let
\begin{center}
\begin{tikzcd}[column sep=large]
Q'':&  3 \ar[loop,"\gamma",swap,looseness=4] 
\end{tikzcd}
\end{center}
then each critical Hesselink stratum $Z_{\beta}$ will be either of the form
\begin{equation*} 
Z_1(n_1,n_2):=\Rep (Q', 1,n_1)\oplus \Rep_0^{\mathrm{st}} (Q'', n_2)
\end{equation*}
or
\begin{equation*}
Z_2(n_1,n_2):=\Rep (Q'',n_1)\oplus \Rep_0^{\mathrm{st}}(Q',1,n_2)
\end{equation*}
These spaces are in fact the same, but their different notation denotes also a different action of the Levi. The corresponding Levi is just $L(n_1,n_2)=\mathbb{C}^*\times \GL(n_1)\times \Or (n_2)$. The codimension of $S_{[\beta]}$ is
\begin{align*}
d(n_1,n_2)=\frac{n^2+n+2}{2}-n_1^2-n_1-\frac{n_2(n_2-1)}{2}-1
\end{align*}

Conversely, every such combination for two integers $n_1$ and $n_2$ with $2n_1+n_2=n$ give a Hesselink stratum. To apply our inductive formula, we must first choose a unique representative in the conjugacy class $S_{[\beta]}$. This corresponds to the choice of a unique isotropic $E_3\subset V_3$ up to conjugation, and these are indexed precisely by the dimension of $E_3$. Therefore, in our inductive formula we will have precisely one summand for each combination $(n_1,n_2)$, i.e.,
\begin{align*}
P_t^{\Or (1,1,n)}(\Rep_0(Q, 1,1,n))=P_t(B\Or (1,1,n))+\sum_{\begin{subarray}{c} n_1\\ 2n_1=n\end{subarray}}&t^{2d(n_1, n-2n_1)}P_t^{L(n_1,n_2)}(Z_1(n_1,n_2))+\\ &+\sum_{\begin{subarray}{c} n_1\\ 2n_1=n\end{subarray}}t^{2d(n_1, n-2n_1)}P_t^{L(n_1,n_2)}(Z_2(n_1,n_2))
\end{align*}
Finally, we note that since the stability conditions on $Z_1$ and $Z_2$ are trivial, only the cycle part of the quiver contributes to its equivariant cohomology. In other words, if we define
\begin{align*}
Z(n_1)&:=\Rep (Q'',n_1)\oplus \Rep_0^{\mathrm{st}}(Q'',n-2n_1)\\
L(n_1)&:=\GL(n_1) \times \Or (n-2n_1)
\end{align*}
our formula reduces to
\begin{align*}
P_t^{\Or (1,1,n)}(\Rep_0(Q, 1,1,n))=P_t(B\Or (1,1,n))+2\sum_{\begin{subarray}{c} n_1\\ 2n_1=n\end{subarray}}&t^{2d(n_1, n-2n_1)}P_t^{L(n_1)}(Z(n_1))
\end{align*}
We have therefore reduced the induction to the computation of the equivariant cohomology of previously known cases. In fact, note that these cases are all of the adjoint representation proper.

\item $\theta_1>\theta_2$: Here a destabilizing representation must have dimension vector $d'=(1,0,n_1)$, which implies that the restriction of $\alpha$ and $\beta$ are both zero. Therefore, the most destabilizing representation is just the choice of a maximal isotropic $E_3$ fixed by $\gamma$, and so this is parametrized by representations of $Q''$ with dimension vector $n_1$. The corresponding $D$ in the Harder-Narasimhan splitting is again a stable representation of $Q'$ with dimension vector $(1,n_2)$. The critical Hesselink stratum is then
\begin{equation*}
Z(n_1,n_2):=\Rep (Q'',n_1)\oplus \Rep_0^{\mathrm{st}}(Q',1, n_2)
\end{equation*}
The induced stability conditions are again trivial. We can proceed as above to reduce in this way the inductive formula to known cases of the one-loop quiver.

\end{itemize}

\bibliographystyle{plain} 
\bibliography{references}

\end{document}